\documentclass[12pt]{amsart}
\usepackage{amsmath,amssymb,amsthm}
\usepackage{enumitem}
\usepackage{graphicx}
\usepackage{xcolor}
\newcommand\sbullet[1][.5]{\mathbin{\vcenter{\hbox{\scalebox{#1}{$\bullet$}}}}}
\hoffset -1.5cm
\voffset -1cm
\textwidth 15.5truecm
\textheight 22.5truecm
\theoremstyle{plain}
\newtheorem{theorem}{Theorem}[section]

\newtheorem{lemma}[theorem]{Lemma}
\theoremstyle{definition}
\newtheorem{definition}[theorem]{Definition}
\newtheorem{example}[theorem]{Example}
\newtheorem{remark}[theorem]{Remark}

\newtheorem*{problem}{Inverse Problem}
\makeatletter
\@addtoreset{equation}{section}
\makeatother

\makeatletter
\newcommand{\Spvek}[2][r]{%
  \gdef\@VORNE{1}
  \left(\hskip-\arraycolsep%
    \begin{array}{#1}\vekSp@lten{#2}\end{array}%
  \hskip-\arraycolsep\right)}

\def\vekSp@lten#1{\xvekSp@lten#1;vekL@stLine;}
\def\vekL@stLine{vekL@stLine}
\def\xvekSp@lten#1;{\def\temp{#1}%
  \ifx\temp\vekL@stLine
  \else
    \ifnum\@VORNE=1\gdef\@VORNE{0}
    \else\@arraycr\fi%
    #1%
    \expandafter\xvekSp@lten
  \fi}
\makeatother

\begin{document}
\title[Mixed Data in Inverse Spectral Problems]
{Mixed Data in Inverse Spectral Problems for the Schr\"{o}dinger Operators\\}
\author{Burak Hat\.{i}no\u{g}lu}
\address{Department of Mathematics, Texas A{\&}M University, College Station,
	TX 77843, U.S.A.}
\email{burakhatinoglu@math.tamu.edu}

\subjclass[2010]{34A55, 34B24, 34L05}


\keywords{inverse spectral theory, Schr\"{o}dinger operators, spectral measure, Weyl-Titchmarsh $m$-function}

\begin{abstract}
We consider the Schr\"{o}dinger operator on a finite interval with an $L^1$-potential. We prove that the potential can be uniquely 
recovered from one spectrum and subsets of another spectrum and point masses of the spectral measure (or norming constants) corresponding to 
the first spectrum. We also solve this Borg-Marchenko-type problem under some conditions on two spectra, when missing part of the second 
spectrum and known point masses of the spectral measure have different index sets.
\end{abstract}
\maketitle

\section{\bf {Introduction}}

In this paper, we consider the Schr\"{o}dinger (Sturm-Liouville) equation
\begin{equation*}
 Lu = -u''+qu = zu
\end{equation*}
on the interval $(0,\pi)$ with the boundary conditions 
\begin{align*}
  &u(0)\cos\alpha - u'(0)\sin\alpha = 0\\
 &u(\pi)\cos\beta + u'(\pi)\sin\beta = 0,
 \end{align*}
and a real-valued potential $q\in L^1(0,\pi)$. 
The spectrum $\sigma_{\alpha,\beta}$ of the Schr\"{o}dinger operator $L$ corresponding to these boundary conditions defines a discrete 
subset of the real line, bounded from below, diverging to $+\infty$.

Direct spectral problems aim to get spectral information from the potential. In inverse spectral problems, the goal is to recover the 
potential from spectral information, such as the spectrum, the norming constants, the spectral measure or Weyl-Titchmarsh $m$-function. 
These notions are discussed in Section 2.

The first inverse spectral result on Schr\"{o}dinger operators is given by Ambarzumian \cite{AMB}. He considered continuous potential with 
Neumann boundary conditions at both endpoints ($\alpha = \beta = \pi/2$) and showed that $q \equiv 0$ if the spectrum consists of squares 
of integers.

Later Borg \cite{BOR} proved that an $L^1$-potential is uniquely recovered from two spectra, corresponding to various pairs of 
boundary conditions and sharing the same boundary conditions at $\pi$ ($\beta_1 = \beta_2$), one of which should be Dirichlet boundary 
condition at $0$ ($\alpha_1 = 0$). Levinson \cite{LVS} extended Borg's result by removing the restriction of Dirichlet boundary 
condition at $0$.

Furthermore, Marchenko \cite{MAR} observed that the spectral measure (or Weyl-Titchmarsh $m$-function) uniquely recovers an $L^1$-potential.

Another classical result is due to Hochstadt and Lieberman \cite{HL}, which says that if the first half of an $L^1$-potential is known, one 
spectrum recovers the whole.

Statements of these classical results are given in Section 3.1.

Gesztesy, Simon and del Rio \cite{DGS} generalized Levinson's theorem to three spectra, by showing two thirds of the union of three spectra 
is sufficient spectral data to recover an $L^1$-potential.

Later on, Gesztesy and Simon \cite{GS} observed that extra smoothness conditions on the potential change required spectral data to recover the 
potential. They proved that the knowledge of the eigenvalues can be replaced by information on the derivatives of the potential. In addition, 
they \cite{GS} also generalized the Hochstadt-Lieberman theorem in the sense that more than the first half of an $L^1$-potential and a 
sufficiently large subset of a spectrum recover the potential.

Afterwards, Amour, Raoux and Faupin \cite{AFR,AR} proved similar results using extra information on the smoothness of the potential.

In a remarkable result, Horv\'{a}th \cite{HOR} characterized unique recovery of a potential in terms of completeness of an exponential system depending 
on given eigenvalues and known part of the potential. This observation opened a new path \cite{BBP,HOR,HS,MP} by connecting inverse 
spectral problems and completeness of exponential systems.

Moreover, Horv\'{a}th and S\'{a}f\'{a}r \cite{HS} proved similar results in terms of a cosine system. The cosine system depends on subsets 
of eigenvalues and norming constants and their spectral data consists of these two subsets.

Recently, Makarov and Poltoratski \cite{MP} gave a version of Horv\'{a}th's theorem (\cite{HOR}) in terms of exterior Beurling-Malliavin 
density by combining Horv\'{a}th's result and the Beurling-Malliavin theorem. In the same paper, they obtained another characterization 
result, which is an uncertainty version of Borg's theorem. As their spectral data, they considered a set of intervals known to include 
two spectra and characterized the inverse spectral problem in terms of a convergence criterion on this set of intervals.

All of these results mentioned above are discussed in Section 3.2.

Classical theorems of Borg, Levinson, Marchenko, Hochstadt and Lieberman led to various other inverse spectral results on 
Schr\"{o}dinger operators (see \cite{AF,GES,GUL,GUL2,GW,GW2,HOR2,PS,SAT,SIM,TU,WAN,WK,WSM,WY,WW,WW2,WX,WX2} and references therein). 
These problems can be divided into two groups. In Borg-Marchenko-type spectral problems, 
one tries to recover the potential from spectral data. However, Hochstadt-Lieberman-type (or mixed) spectral problems recover 
the potential using a mixture of partial information on the potential and spectral data.

In the present paper, our interest is on regular Schr\"{o}dinger operators with summable potentials on a finite interval. However, many 
problems with locally summable potentials \cite{ECK,ECK2,EGRT,ET,GS2,HM,KST} or on various settings such as half-line 
\cite{ET,GES,GS,GS3,SIM,TU}, real-line \cite{ET,GES,GS2,GS3,TU} or graphs \cite{BON,BON2,BS,BY,YAN} are solved.

Borg's, Levinson's and Hochstadt and Lieberman's theorems suggest that one spectrum gives exactly one half of the full spectral information required to recover the 
potential. Recalling the fact that the spectral measure is a discrete measure supported on a spectrum, the same can be said for the set of 
point masses of the spectral measure. As follows from Marchenko's theorem, the set of point masses of the spectral measure (or the set of 
norming constants) gives exactly one half of the full spectral information required to recover the potential.

These observations allow us to formulate the following question:
\begin{problem}
Do one spectrum and partial information on another spectrum and the set of point masses of the spectral measure corresponding to the 
first spectrum recover the potential?
\end{problem}
This Borg-Marchenko-type problem can be seen as a combination of Levinson's and Marchenko's results.

In the present paper, we answer this question positively. First, we give a proof with the most common boundary conditions, Dirichlet ($u=0$) 
and Neumann ($u'=0$). Theorem \ref{DDNDthm} solves this inverse spectral problem when given part of the point masses of the spectral measure 
corresponding to the Dirichlet-Dirichlet spectrum matches with the missing part of the Neumann-Dirichlet spectrum, i.e. they share same 
index sets. In Theorem \ref{DDNDindex1} and Theorem \ref{DDNDindex2}, we consider the non-matching index sets case with some restrictions 
on two spectra.

In order to deal with general boundary conditions we introduce a more general $m$-function in Section 4.3. With this $m$-function, we extend 
Theorem \ref{DDNDthm} in Theorem \ref{GBCthm} to general boundary conditions. 
In Theorem \ref{GBCindex1} and Theorem \ref{GBCindex2} we consider the non-matching index sets case.\\

The paper is organized as follows.\\
\begin{itemize}
 \item In Section 2.1 we discuss spectra of Schr\"{o}dinger operators and their asymptotics for various boundary conditions.
 \item In Section 2.2 we define Weyl-Titchmarsh $m$-function and spectral measure for Schr\"{o}dinger operators.
 \item In Section 3.1 we recall statements of the classical results of Ambarzumian, Borg, Levinson, Marchenko, Hochstadt and Lieberman.
 \item In Section 3.2 we discuss some recent results in the finite interval setting with summable potential. 
 \item In Section 4.1 we give a representation of Weyl-Titchmarsh $m$-function as an infinite product and prove the inverse spectral 
 problem mentioned above with Dirichlet-Dirichlet, Neumann-Dirichlet boundary conditions.
 \item In Section 4.2 we consider the same problem in the non-matching index sets case.
 \item In Section 4.3 we introduce a more general $m$-function and solve the inverse spectral problem corresponding to this $m$-function 
 with general boundary conditions in both the matching and non-matching index sets cases.
 \item In Appendix A we list all definitions and theorems from complex function theory used in this paper.
\end{itemize}

\newpage

 \section{\bf {Preliminaries}}
 
 \subsection{\bf {One-dimensional Schr\"{o}dinger operator on a finite interval}}
 
 As it was defined in the introduction, we consider the Schr\"{o}dinger equation
 \begin{equation}\label{Scheq}
   Lu = -u'' + qu = zu
 \end{equation}
on the interval $(0,\pi)$ associated with the boundary conditions
 \begin{align}
  &u(0)\cos\alpha - u'(0)\sin\alpha = 0 \label{BC1}\\
 &u(\pi)\cos\beta + u'(\pi)\sin\beta = 0, \label{BC2}
 \end{align}
 where $\alpha, \beta \in [0,\pi)$ and the potential $q \in L^1(0,\pi)$ is real-valued.
 
 The spectrum $\sigma_{\alpha,\beta}$ of the Schr\"{o}dinger operator 
 \begin{equation*}
  L : u \mapsto -u'' + qu
 \end{equation*}
 with $q \in L^1$ and boundary conditions (\ref{BC1}), (\ref{BC2}) is a discrete real sequence, bounded from below. Adding a positive constant to 
 the potential $q$, shifts the spectrum by the same constant. This allows us to assume wlog 
 $\sigma_{\alpha,\beta} \subset \mathbb{R}_+$. Throughout the paper we assume $\mathbb{N} = \{1,2,3,\dots\}$.
 Asymptotic behavior of the spectrum $\sigma_{\alpha,\beta} = \{a_n\}_{n\in \mathbb{N}}$, 
 depending on the signs of $\alpha$ and $\beta$, is given below:
 
 If $\alpha \neq 0$, $\beta \neq 0$, then 
 \begin{equation}\label{asy11}
  a_n = (n-1)^2 + \frac{2}{\pi}[\cot(\beta) + \cot(\alpha)] + \frac{1}{\pi}\int_0^{\pi}q(x)dx + \alpha_n
 \end{equation}
where $\alpha_n = o(1)$ as $n \to + \infty$.

If $\alpha = 0$, $\beta = 0$, then 
 \begin{equation}\label{asy00}
  a_n = n^2 + \frac{1}{\pi}\int_0^{\pi}q(x)dx + \alpha_n
 \end{equation}
where $\alpha_n = o(1)$ as $n \to + \infty$.

If $\alpha \neq 0$, $\beta = 0$, then 
 \begin{equation}\label{asy10}
  a_n = \left(n-\frac{1}{2}\right)^2 + \frac{2}{\pi}\cot(\alpha) + \frac{1}{\pi}\int_0^{\pi}q(x)dx + \alpha_n
 \end{equation}
where $\alpha_n = o(1)$ as $n \to + \infty$.

If $\alpha = 0$, $\beta \neq 0$, then 
 \begin{equation}\label{asy01}
  a_n = \left(n-\frac{1}{2}\right)^2 + \frac{2}{\pi}\cot(\beta) + \frac{1}{\pi}\int_0^{\pi}q(x)dx + \alpha_n
 \end{equation}
where $\alpha_n = o(1)$ as $n \to + \infty$.

In the case $q \in L^2(0,\pi)$, the same asymptotics are valid with $\{\alpha_n\}_{n \in \mathbb{N}} \in l^2$.

One can find these results in the classical texts on Schr\"{o}dinger operators, for instance 
\cite{LG} or \cite{LS}.

\subsection{\bf {Weyl-Titchmarsh $m$-function and the spectral measure}}
 
 Let us choose the boundary condition (\ref{BC1}) and introduce two solutions $s_z(t)$ and $c_z(t)$ of (\ref{Scheq}) satisfying the initial 
 conditions 
\begin{align*}
&s_z(0) = \sin(\alpha), \quad s'_z(0) = \cos(\alpha)\\
&c_z(0) = \cos(\alpha), \quad c'_z(0) = -\sin(\alpha).
\end{align*}
\begin{definition}
 The norming constant $\tau_{\alpha}$, for the eigenvalue $a_n$ is defined as
 \begin{equation*}
  \tau_{\alpha}(a_n) := \int_0^{\pi}|s_{a_n}(t)|^2dt.
 \end{equation*}
\end{definition}
Note that $s_z(t)$ and $c_z(t)$ are linearly independent solutions and 
their Wronskian satisfies $W(c_z,s_z) = 1$. This allows us to represent $u_z(t)$, a solution of ($\ref{Scheq}$) with boundary 
conditions $u_z(\pi) = \sin\beta$, $u_z'(\pi) = -\cos\beta$, as 
\begin{equation*}
u_z(t) = c_z(t) + m_{\alpha,\beta}(z)s_z(t),
\end{equation*}
where 
\begin{equation*}
    m_{\alpha,\beta}(z) = -\frac{W(c_z,u_z)}{W(s_z,u_z)}.
\end{equation*}
This is how we derive the $m$-function.
\begin{definition}
 Weyl-Titchmarsh $m$-function with the boundary conditions (\ref{BC1}), (\ref{BC2}) is defined as
 \begin{equation*}
  m_{\alpha,\beta}(z) := \frac{\cos(\alpha)u_z'(0) + \sin(\alpha)u_z(0)}{-\sin(\alpha)u_z'(0) + \cos(\alpha)u_z(0)},
 \end{equation*}
where $\alpha,\beta \in [0,\pi)$.
\end{definition}
It is well-known that Weyl $m$-function  $m_{\alpha,\beta}$ is a meromorphic Herglotz function. 
The definition of a Herglotz function and other definitions and results from complex function theory used in this paper can be found in 
Appendix A. Everitt \cite{EVE} proved that the Weyl $m$-function has the asymptotic
\begin{equation*}
 m_{0,\beta}(z) = i\sqrt z + o(1)
\end{equation*}
for $\alpha = 0$, and
\begin{equation*}
 m_{\alpha,\beta}(z) = \frac{\cos\alpha}{\sin\alpha} + \frac{1}{\sin^2\alpha}\frac{i}{\sqrt z} + O\left(\frac{1}{|z|}\right)
\end{equation*}
for $\alpha \in (0,\pi)$ as $z$ goes to infinity in the upper half plane. Asymptotics of Weyl $m$-function and Herglotz 
representation theorem imply that $m_{\alpha,\beta}$ is represented as the Herglotz integral of a discrete positive Poisson-finite 
measure supported on the spectrum $\sigma_{\alpha,\beta}$:
\begin{equation}\label{Hrep}
 m_{\alpha,\beta}(z) = a + \int_{\mathbb{R}} \left[\frac{1}{t-z} - \frac{t}{1+t^2}\right]d\mu_{\alpha,\beta}(t), 
\end{equation}
where $a = \Re( m_{\alpha,\beta}(i))$, $\sigma_{\alpha,\beta} = \{a_n\}_{n \in \mathbb{N}}$ and 
$\mu_{\alpha,\beta} = \sum_{n \in \mathbb{N}} \gamma_n \delta_{a_n}$. The measure $\mu_{\alpha,\beta}$ is the spectral 
measure of the Schr\"{o}dinger operator $L$ corresponding to the $m$-function $m_{\alpha,\beta}$. The point masses of the spectral measure 
is represented in terms of norming constants as $\gamma_n = (\tau_{\alpha}(a_n))^{-1}.$ 
\begin{definition}
The spectral measure of the Schr\"{o}dinger operator $L$ corresponding to the $m$-function $m_{\alpha,\beta}$ (or the boundary 
conditions (\ref{BC1}), (\ref{BC2})) is defined as
 \begin{equation*}
  \mu_{\alpha,\beta} := \sum_{n \in \mathbb{N}} \frac{\delta_{a_n}}{\tau_{\alpha}(a_n)},
 \end{equation*}
where $\alpha,\beta \in [0,\pi)$ and $\sigma_{\alpha,\beta} = \{a_n\}_{n \in \mathbb{N}}$.
\end{definition}
Since $\mu_{\alpha,\beta}$ is a Poisson-finite 
measure, the spectrum and the point masses of the spectral measure satisfy 
\begin{equation*}
\sum_{n \in \mathbb{N}} \frac{\gamma_n}{1+a_n^2} ~\textless~ \infty.
\end{equation*}

These properties of the $m$-function, the spectral measure and a detailed 
discussion of one dimensional Schr\"{o}dinger operators appear in Chapter 9 of \cite{TES}.

In order to illustrate what we have discussed so far, let us consider the free potential ($q \equiv 0$) with Dirichlet ($u=0$) and 
Neumann ($u'=0$) boundary conditions.

\begin{example}\label{exmp}
 The spectra, the $m$-function and the spectral measure for $q \equiv 0$ on $(0,\pi)$ with Dirichlet-Dirichlet, Neumann-Dirichlet and 
 Neumann-Neumann boundary conditions are as follows.
\begin{equation*}
\begin{aligned}[c]
&\sigma_{DD} := \sigma_{0,0} = \{n^2\}_{n \in \mathbb{N}}\\
&\sigma_{ND} := \sigma_{\pi/2,0} = \{(n-\frac{1}{2})^2\}_{n \in \mathbb{N}}\\
&\sigma_{NN} := \sigma_{\pi/2,\pi/2} = \{(n-1)^2\}_{n \in \mathbb{N}}\\
\end{aligned}
\quad 
\begin{aligned}[c]
&m_{0,0} = -\sqrt{z}\cot(\sqrt{z}\pi)\\
&m_{\pi/2,0} = \frac{\tan(\sqrt{z}\pi)}{\sqrt{z}}\\
&m_{\pi/2,\pi/2} = \frac{\cot(\sqrt{z}\pi)}{\sqrt{z}}\\
\end{aligned}
\quad
\begin{aligned}[c]
&\mu_{0,0} = \frac{2}{\pi}\sum_{n=1}^{\infty}n^2\delta_{n^2}\\
&\mu_{\pi/2,0} = \frac{2}{\pi}\sum_{n=1}^{\infty}\delta_{(n-1/2)^2}\\
&\mu_{\pi/2,\pi/2} = \frac{2}{\pi}\sum_{n=1}^{\infty}\delta_{(n-1)^2}\\
\end{aligned}
\end{equation*}
\end{example}

\begin{figure}[h]
    \centering
    \includegraphics[scale=0.45]{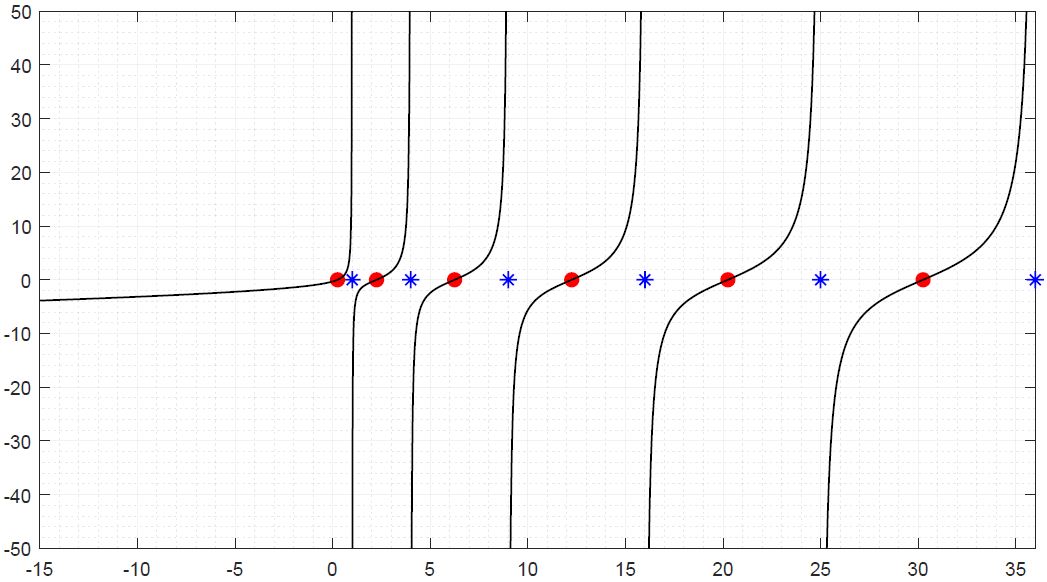}
    \caption{The graph of Weyl $m$-function $m_{0,0}$ on $\mathbb{R}$, Neumann-Dirichlet spectrum $\sigma_{ND}$ 
    ($\textcolor{red}{\sbullet[0.75]}$) and Dirichlet-Dirichlet spectrum $\sigma_{DD}$ (\textcolor{blue}{$\ast$}) 
    for the free potential ($q \equiv 0$).}
\end{figure}

 \section{\bf {Inverse spectral theory of regular Schr\"{o}dinger operators}}
 \subsection{Classical results}
 The first inverse spectral result on Schr\"{o}dinger operators was given by Ambarzumian.
 \begin{theorem} [{Ambarzumian \cite{AMB},~\cite{HOR}}] Let $q \in C[0,\pi]$ and $\sigma_{\pi/2,\pi/2} = \{n^2\}_{n=0}^{\infty}$.
 Then $q \equiv 0$.  
 \end{theorem}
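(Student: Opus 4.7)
The plan is to combine the first-order spectral asymptotic with a variational (Rayleigh quotient) argument, exploiting the fact that the hypothesis $\sigma_{\pi/2,\pi/2}=\{n^2\}_{n=0}^{\infty}$ pins down both the mean of $q$ and the ground state energy simultaneously.

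First, I would use the asymptotic formula for the Neumann--Neumann spectrum. In this case $\alpha=\beta=\pi/2$, so $\cot\alpha=\cot\beta=0$ in equation (\ref{asy11}), and if we index $\sigma_{\pi/2,\pi/2}=\{a_n\}_{n\in\mathbb{N}}$ so that $a_n=(n-1)^2$ as given in Example \ref{exmp}, the asymptotic reduces to
\begin{equation*}
 a_n=(n-1)^2+\frac{1}{\pi}\int_0^{\pi}q(x)\,dx+o(1).
\end{equation*}
Matching this with the hypothesis $a_n=(n-1)^2$ and letting $n\to\infty$ forces $\int_0^{\pi}q(x)\,dx=0$.

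Next, I would invoke the variational characterization of the smallest eigenvalue. Since $0=0^2\in\sigma_{\pi/2,\pi/2}$ is the least element of the spectrum, the ground state energy of the Neumann realization of $L$ equals $0$, so
\begin{equation*}
 0=\min_{u\in H^1(0,\pi),\,u\not\equiv 0}\frac{\int_0^{\pi}\bigl(|u'(x)|^2+q(x)|u(x)|^2\bigr)\,dx}{\int_0^{\pi}|u(x)|^2\,dx}.
\end{equation*}
Testing this Rayleigh quotient against the constant function $u\equiv 1$ (which lies in the form domain of the Neumann operator) gives value $\frac{1}{\pi}\int_0^{\pi}q\,dx$, which by the previous step equals $0$. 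Hence the constant function attains the minimum and must be a (ground state) eigenfunction.

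Finally, substituting $u\equiv 1$ into the eigenvalue equation $-u''+qu=0\cdot u$ yields $q\equiv 0$ pointwise, using continuity of $q$ (which is needed only to interpret the equation classically; $L^1$ would suffice in the a.e.\ sense). The main conceptual step, and thus the only real obstacle, is the variational one: one must know that $0$ is genuinely the bottom of the spectrum (not merely an eigenvalue), which is clear here because the hypothesis lists all eigenvalues and the smallest is $0$. Once that is in hand, the argument is essentially a one-line application of the minimax principle, and the continuity hypothesis plays only a mild role at the very end.
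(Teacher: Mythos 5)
Your argument is correct, and it is in fact the classical proof of Ambarzumian's theorem; note that the paper itself states this result without proof (it is quoted from \cite{AMB} and \cite{HOR} as background in Section 3.1), so there is no in-paper argument to compare against. Your two steps are exactly the standard ones: the trace-type identity $\int_0^{\pi}q\,dx=0$ extracted from the asymptotic (\ref{asy11}) with $\cot\alpha=\cot\beta=0$ (which applies since $C[0,\pi]\subset L^1(0,\pi)$, and the enumeration issue is harmless because the spectrum is listed in increasing order, so $a_n=(n-1)^2$), followed by the variational observation that the constant test function attains the bottom of the spectrum. Two small points deserve an explicit word if this were written out in full. First, the step ``the minimizer of the Rayleigh quotient is an eigenfunction'' needs the standard justification: perturbing $u\equiv 1$ by $t\,w$, $w\in H^1(0,\pi)$, and differentiating the quotient at $t=0$ yields the weak Euler--Lagrange equation $\int_0^{\pi}\bigl(u'w'+q\,u\,w\bigr)\,dx=0$ for all $w$, so $u\equiv 1$ is a weak (hence, for the regular problem, genuine) eigenfunction for the eigenvalue $0$; since the form domain of the Neumann realization is all of $H^1(0,\pi)$ with no boundary constraint, the constant is admissible. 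Second, you correctly identify that $0$ is the bottom of the spectrum because the hypothesis exhibits the entire (purely discrete) spectrum; this is where the argument would break for, say, Dirichlet conditions, where constants are not admissible test functions --- which is consistent with the known failure of Ambarzumian-type rigidity for other boundary conditions. With those justifications supplied, the proof is complete, and your closing remark is accurate: continuity of $q$ is used only to upgrade $q=0$ a.e.\ to $q\equiv 0$ pointwise.
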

 Later Borg found that in most cases two spectra is the required spectral information to recover the operator uniquely.
 \begin{theorem} [{Borg \cite{BOR},~\cite{HOR}}]\label{Borg} Let $q \in L^1(0,\pi)$, $\sigma_1 = \sigma_{0,\beta}$, 
 $\sigma_2 = \sigma_{\alpha_2,\beta}$, $\sin\alpha_2 \neq 0$ and\\
 $\widetilde{\sigma}_2$ = 
    $\begin{cases}
       \sigma_2 & \text{if }\sin\beta = 0\\
       \sigma_2 \text{\textbackslash} \{a_1\} &\text{if }\sin\beta \neq 0.  
     \end{cases}$\\
 Then $\sigma_1\cup\widetilde{\sigma}_2$ determines the potential and no proper subset has the same property.
 \end{theorem}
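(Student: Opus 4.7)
The plan is to reduce Borg's theorem to Marchenko's theorem by showing that the combined data $\sigma_1 \cup \widetilde\sigma_2$ uniquely determines the Weyl--Titchmarsh $m$-function $m_1 := m_{0,\beta}$, after which the spectral measure $\mu_{0,\beta}$ and the potential $q$ follow at once. The key observation is that the second spectrum $\sigma_2$, though defined via a different boundary condition at $0$, can be read off $m_1$: since both $m_1$ and $m_2 := m_{\alpha_2,\beta}$ come from the same Weyl solution $u_z$ of (\ref{Scheq}) satisfying (\ref{BC2}), a direct computation using the definition of $m_{\alpha,\beta}$ gives the Möbius identity
$$m_2(z) = \frac{\cot\alpha_2\cdot m_1(z) + 1}{-m_1(z) + \cot\alpha_2},$$
so $\sigma_2 = \{z : m_1(z) = \cot\alpha_2\}$. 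Equivalently, $\sigma_1$ is the pole set and $\sigma_2$ is the zero set of the meromorphic Herglotz function $F(z) := m_1(z) - \cot\alpha_2$.

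The second step is to reconstruct $F$ from this pole-zero data. Using the $n^2$-type asymptotics from Section~2.1, $F$ is a meromorphic Herglotz function of finite order whose simple poles $\{a_n\}=\sigma_1$ and simple zeros $\{b_n\}=\widetilde\sigma_2$ grow at the same rate and interlace, so Hadamard factorization (Appendix~A) yields, up to convergence factors,
$$F(z) = C\prod_n \frac{1 - z/b_n}{1 - z/a_n}.$$
The reason $\widetilde\sigma_2$ must exclude the first point of $\sigma_2$ when $\sin\beta \neq 0$ is precisely to align the asymptotic densities of the two sequences: in that case $\sigma_1$ has leading term $(n-\tfrac12)^2$ while $\sigma_2$ has leading term $(n-1)^2$ and hence one extra low-lying eigenvalue. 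Everitt's asymptotic $m_1(z) = i\sqrt z + o(1)$ as $z\to\infty$ in the upper half-plane then pins down the multiplicative constant $C$, so $F$ and hence $m_1$ are fully determined; the Herglotz representation (\ref{Hrep}) recovers the spectral measure $\mu_{0,\beta}$, and Marchenko's theorem (Section~3.1) completes the recovery of $q$.

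For the minimality assertion, if any single eigenvalue is removed from $\sigma_1\cup\widetilde\sigma_2$, one can slide it continuously along $\mathbb{R}$ while preserving the interlacing, the Herglotz class of $F$, and the prescribed asymptotics. The above reconstruction then gives a one-parameter family of distinct admissible $m$-functions, and Marchenko's theorem produces a one-parameter family of distinct potentials. The main obstacle I expect in executing this plan is the convergence analysis of the Hadamard product together with the clean verification that Everitt's asymptotic uniquely fixes $C$ in both subcases $\sin\beta = 0$ and $\sin\beta \neq 0$; this is also precisely where the one-point index shift in the definition of $\widetilde\sigma_2$ is vindicated.
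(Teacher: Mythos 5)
Your first half is sound, but a word of orientation: the paper never proves Theorem \ref{Borg} --- it is quoted as a classical result from \cite{BOR} and \cite{HOR} --- so your proposal can only be measured against the literature and against the paper's closest internal machinery, Lemma \ref{lmm1} and Theorem \ref{DDNDthm}. The M\"obius identity for $m_2$ in terms of $m_1$ is correct, $\sigma_2$ is indeed the level set $\{m_1 = \cot\alpha_2\}$, and the product representation of the Herglotz function $F = m_1 - \cot\alpha_2$ with interlacing zeros and poles is exactly the argument of Lemma \ref{lmm1} (note, however, that this is the inner-function/Krein-type argument for real meromorphic Herglotz functions, not Hadamard factorization, which appears nowhere in Appendix A). In the branch $\sin\beta = 0$, where $\widetilde{\sigma}_2 = \sigma_2$, the only unknown is $C$, Everitt's asymptotic fixes it, and Marchenko finishes; that branch works. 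The genuine gap is the branch $\sin\beta \neq 0$: there the data omit the lowest point $b_1$ of $\sigma_2$, so your reconstruction has \emph{two} unknowns, $C$ and $b_1$, while the $o(1)$ asymptotic supplies only one condition. Concretely, writing $\sigma_1 = \{a_n\}$, $\sigma_2 = \{b_n\}$ with $b_1 < a_1 < b_2 < a_2 < \cdots$, for every $\beta_* \in (0,a_1)$ the function
\[
 G_{\beta_*}(z) := \frac{z-\beta_*}{z-b_1}\bigl(m_1(z)-\cot\alpha_2\bigr) = \Bigl(1+\frac{b_1-\beta_*}{z-b_1}\Bigr)F(z)
\]
has real, simple zeros $\{\beta_*\}\cup\{b_n\}_{n\geq 2}$ and poles $\{a_n\}$ with the same interlacing pattern and sign normalization as $F$ (so the argument count in Step 2 of the proof of Theorem \ref{DDNDthm} shows it is meromorphic Herglotz), and it satisfies $G_{\beta_*}(z)+\cot\alpha_2 = i\sqrt{z}+O(|z|^{-1/2}) = i\sqrt{z}+o(1)$. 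Thus the whole one-parameter family passes every test you impose, and the claim that ``Everitt's asymptotic pins down $C$, so $F$ is fully determined'' is false in this case. Showing that only one member of the family is the $m$-function of an $L^1$ potential is precisely the hard content of the dropped-eigenvalue case --- handled in \cite{HOR} via completeness of exponential systems, classically via transformation operators --- and it cannot be extracted from the product form plus first-order asymptotics. Relatedly, your heuristic that the point is dropped ``to align the asymptotic densities'' is backwards: the excess of the zero counting function over the pole counting function is $+1/2$ when $\sin\beta = 0$ and becomes $-1/2$ after the deletion when $\sin\beta \neq 0$; the theorem asserts the data \emph{may} be thinned by one point in the second case, not that it must be.

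The minimality half has a second gap of a different kind. Sliding a removed eigenvalue along $\mathbb{R}$ does produce a family of Herglotz functions with admissible interlacing and asymptotics, but Marchenko's theorem is a pure uniqueness statement: it does not assert that any such function is the $m$-function of some $L^1$ potential. To conclude that no proper subset of $\sigma_1\cup\widetilde{\sigma}_2$ determines $q$, you must \emph{realize} each perturbed data set by an actual potential, which requires an existence/characterization theorem of Gelfand--Levitan type (or an explicit isospectral deformation), not Marchenko. Indeed, realizability is exactly what breaks the symmetry between the two cases of $\widetilde{\sigma}_2$: at the level of Herglotz functions with correct asymptotics one low-lying point is always free, yet for $\sin\beta \neq 0$ realizable potentials pin it down while for $\sin\beta = 0$ they do not; a proposal that never invokes realizability cannot detect this distinction, which is the heart of the theorem's sharp form.
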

A Schr\"{o}dinger operator (or a potential) is said to be determined (or recovered) by its spectral data, if any other operator with the same 
data must have the same potential a.e.\ on $(0,\pi)$. Levinson extended Borg's result by removing the Dirichlet boundary condition restriction 
from the first spectrum.
\begin{theorem} [{Levinson \cite{LVS},~\cite{HOR}}]\label{Levinson} Let $q \in L^1(0,\pi)$ and $\sin(\alpha_1-\alpha_2) \neq 0$. Then 
$\sigma_{\alpha_1,\beta}$ and $\sigma_{\alpha_2,\beta}$ determine the potential.
\end{theorem}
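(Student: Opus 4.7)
The plan is to reduce knowledge of the two spectra to knowledge of a single Weyl--Titchmarsh $m$-function and then invoke Marchenko's theorem. The crucial observation is that the boundary condition at $\pi$ is common to both spectra, so one may introduce a single ``right-endpoint solution'' $\phi(x, z)$ of (\ref{Scheq}) satisfying $\phi(\pi, z) = \sin\beta$ and $\phi'(\pi, z) = -\cos\beta$, which does not depend on $\alpha$. The eigenvalues $\sigma_{\alpha, \beta}$ then coincide with the zeros of the characteristic function
\[
\Delta_\alpha(z) := \phi(0, z)\cos\alpha - \phi'(0, z)\sin\alpha,
\]
which is entire of order $1/2$ in $z$.

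Given the two spectra $\sigma_{\alpha_1, \beta}$ and $\sigma_{\alpha_2, \beta}$, one knows the zero sets of $\Delta_{\alpha_1}$ and $\Delta_{\alpha_2}$. Since an entire function of order $1/2$ admits a Hadamard factorization with no exponential factor, each $\Delta_{\alpha_j}$ is determined by its zeros up to a multiplicative constant. The leading scalar is then fixed by matching the product formula against an independent estimate of $\Delta_{\alpha_j}$: the eigenvalue asymptotics (\ref{asy11})--(\ref{asy01}) dictate the growth of the infinite product, while the Jost-type asymptotics of $\phi(0, z)$ and $\phi'(0, z)$ on the negative real axis (standard estimates for $q \in L^1$) fix the normalization. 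In this way both characteristic functions are reconstructed exactly from the data.

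Once $\Delta_{\alpha_1}$ and $\Delta_{\alpha_2}$ are known, the defining relation can be read as the linear system
\[
\begin{pmatrix} \cos\alpha_1 & -\sin\alpha_1 \\ \cos\alpha_2 & -\sin\alpha_2 \end{pmatrix}\begin{pmatrix} \phi(0, z) \\ \phi'(0, z) \end{pmatrix} = \begin{pmatrix} \Delta_{\alpha_1}(z) \\ \Delta_{\alpha_2}(z) \end{pmatrix},
\]
whose determinant equals $\sin(\alpha_1 - \alpha_2) \neq 0$. Solving, we recover $\phi(0, z)$ and $\phi'(0, z)$ individually, and therefore the $m$-function
\[
m_{\alpha_1, \beta}(z) = \frac{\cos(\alpha_1)\,\phi'(0, z) + \sin(\alpha_1)\,\phi(0, z)}{-\sin(\alpha_1)\,\phi'(0, z) + \cos(\alpha_1)\,\phi(0, z)}.
\]
Marchenko's theorem then yields $q$ uniquely, so the construction is complete.

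The main obstacle is the middle step: passing from the zero sets to the entire functions $\Delta_{\alpha_j}$ themselves. The Hadamard factorization is clean at order $1/2$, but one must exclude an unknown nonzero multiplicative factor in front of each $\Delta_{\alpha_j}$. This is precisely where the explicit asymptotic forms (\ref{asy11})--(\ref{asy01}) enter: they force the infinite product built from the spectrum to converge to the correct normalization of $\phi(0, z)\cos\alpha_j - \phi'(0, z)\sin\alpha_j$ rather than to an unidentified scalar multiple, without which the inversion of the $2\times 2$ system in the last step would recover $\phi(0, z)$ and $\phi'(0, z)$ only up to independent constants and the $m$-function could not be pinned down.
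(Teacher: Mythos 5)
The paper does not actually prove Theorem \ref{Levinson}: it is quoted as a classical result from \cite{LVS} (see also \cite{HOR}), so there is no internal proof to compare against; the closest in-house machinery is Lemma \ref{lmm1}, where a product representation of the $m$-function is derived via inner functions, Hilbert transforms and the Herglotz structure rather than via Hadamard factorization. Your argument is the standard characteristic-function proof of Levinson's theorem, and it is essentially sound: since the boundary condition at $\pi$ is shared, both spectra are zero sets of $\Delta_{\alpha_j}(z)=\phi(0,z)\cos\alpha_j-\phi'(0,z)\sin\alpha_j$ for a single $\alpha$-independent solution $\phi$; these are entire of order $1/2$, hence genus zero, so each is recovered from its zeros up to a constant; and the $2\times 2$ system with determinant $\sin(\alpha_1-\alpha_2)\neq 0$ then yields $\phi(0,z)$, $\phi'(0,z)$, the $m$-function, and uniqueness by Marchenko \cite{MAR}. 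What your approach buys is a constructive reduction of two spectra to one $m$-function, making Levinson's theorem a corollary of Marchenko's, whereas the paper's product-representation technique (Lemma \ref{lmm1}) runs the logic in the other direction, expressing $m$ through the two spectra and then invoking Borg or Levinson as a black box. Three points should be made explicit for completeness: first, the zeros of $\Delta_{\alpha_j}$ are all simple and exhaust $\sigma_{\alpha_j,\beta}$ with the correct multiplicity (standard for regular Sturm--Liouville problems, but it is what legitimizes the Hadamard step); second, your normalization argument needs the leading asymptotics of $\Delta_{\alpha_j}(z)$ as $z\to-\infty$ to be independent of the potential --- this is true for $q\in L^1$, e.g.\ $\phi(0,z)=\sin\beta\cosh(\pi\sqrt{-z})\,(1+o(1))$ when $\sin\beta\neq 0$, with the $q$-dependent corrections of lower order, and the second-order term involving $\int_0^\pi q$ is itself fixed by the eigenvalue asymptotics (\ref{asy11})--(\ref{asy01}), so there is no circularity; third, the function you reconstruct coincides with $m_{\alpha_1,\beta}$ exactly as normalized in Section 2.2 (your $\phi$ is the paper's $u_z$), so Marchenko's theorem applies verbatim. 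With these points spelled out, your proof is correct.
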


Marchenko showed that the spectral measure or the corresponding Weyl $m$-function provides sufficient spectral data to recover the potential 
uniquely.
\begin{theorem} [{Marchenko \cite{MAR},~\cite{TES}-Section 9.4}] Let $q \in L^1(0,\pi)$. Then $\mu_{\alpha,\beta}$ or 
$m_{\alpha,\beta}$ determines the potential.
\end{theorem}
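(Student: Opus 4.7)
The plan is to establish the two reductions $\mu_{\alpha,\beta} \Rightarrow m_{\alpha,\beta}$ and $m_{\alpha,\beta} \Rightarrow q$; together these give the full statement. Both rely on ingredients already set up in the excerpt: the Herglotz representation (\ref{Hrep}) together with Everitt's boundary-layer asymptotics for the first, and Levinson's theorem (Theorem \ref{Levinson}) for the second.

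For the first reduction, the formula (\ref{Hrep}) exhibits $m_{\alpha,\beta}(z)$ as a real constant $a$ plus a Cauchy-type integral against $\mu_{\alpha,\beta}$. The integral term is determined by the measure, so it suffices to recover $a = \Re\, m_{\alpha,\beta}(i)$. This is read off from the asymptotics as $z \to \infty$ in the upper half-plane: when $\alpha = 0$ the expansion $m_{0,\beta}(z) = i\sqrt{z} + o(1)$ forces the constant term to vanish, and when $\alpha \in (0,\pi)$ the expansion $m_{\alpha,\beta}(z) = \cot\alpha + \tfrac{i}{\sqrt{z}\sin^{2}\alpha} + O(|z|^{-1})$ pins the constant down as $\cot\alpha$. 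Matching these prescribed constants against the asymptotics of the Herglotz integral (which can be computed directly from $\mu_{\alpha,\beta}$ by dominated convergence along the imaginary axis) uniquely identifies $a$, and hence $m_{\alpha,\beta}$.

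For the second reduction, I would show that $m_{\alpha,\beta}$ encodes two spectra of Schr\"odinger operators with the same potential but distinct boundary conditions at $0$. The simple poles of $m_{\alpha,\beta}$ are the zeros of the denominator $-\sin(\alpha)u_z'(0) + \cos(\alpha)u_z(0)$, which is precisely condition (\ref{BC1}) applied to $u_z$; since $u_z$ satisfies (\ref{BC2}) by construction, the poles make up $\sigma_{\alpha,\beta}$. The zeros of $m_{\alpha,\beta}$ come from $\cos(\alpha)u_z'(0) + \sin(\alpha)u_z(0) = 0$, which is the boundary condition at $0$ with rotated angle $\tilde{\alpha} \equiv \alpha + \pi/2 \pmod{\pi}$, so the zeros form $\sigma_{\tilde\alpha,\beta}$. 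Because $\sin(\alpha - \tilde\alpha) = \pm 1 \neq 0$, Theorem \ref{Levinson} applies to the pair $\sigma_{\alpha,\beta}$, $\sigma_{\tilde\alpha,\beta}$ and yields $q$ uniquely.

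The step I expect to require the most care is the extraction of $a$ in the first reduction: one has to verify that the Herglotz integral $\int\!\big[\tfrac{1}{t-z} - \tfrac{t}{1+t^{2}}\big]\,d\mu_{\alpha,\beta}(t)$ admits an expansion along a non-tangential approach to $\infty$ that is compatible with Everitt's expansions term by term, so that the matching procedure unambiguously isolates the additive constant. Once this is done, the second reduction is essentially structural — an algebraic identification of the meromorphic data of $m_{\alpha,\beta}$ followed by a direct appeal to Levinson's theorem — and carries no additional analytic content.
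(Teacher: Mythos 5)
Your proposal is essentially correct, but it cannot coincide with ``the paper's own proof'' for a simple reason: the paper does not prove this statement at all. It is quoted as a classical theorem with citations to Marchenko and to Section 9.4 of Teschl. The closest the paper comes to an internal proof is the remark following Theorem \ref{DDNDthm} (and the analogous remark after Theorem \ref{GBCthm}), where Marchenko's theorem is recovered as the special case $A=\mathbb{N}$ of the main results; that route goes through the infinite-product representation of the $m$-function (Lemma \ref{lmm1}), a {\u C}ebotarev-type representation, and finally Borg's or Levinson's theorem. Your second reduction is the structural core of that machinery, stripped of everything the paper needs only because it discards part of the zero set: you observe that the poles of $m_{\alpha,\beta}$ are exactly $\sigma_{\alpha,\beta}$ (the denominator is the boundary form (\ref{BC1}) applied to $u_z$, which already satisfies (\ref{BC2})), that the zeros are exactly $\sigma_{\tilde\alpha,\beta}$ with $\tilde\alpha\equiv\alpha+\pi/2 \pmod{\pi}$, that numerator and denominator cannot vanish simultaneously since the two linear forms in $(u_z(0),u_z'(0))$ are independent and $(u_z(0),u_z'(0))\neq(0,0)$, and then you invoke Theorem \ref{Levinson} with $\sin(\alpha-\tilde\alpha)=\pm1$. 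This is a legitimate, shorter, and more elementary argument than the paper's; what the paper's heavier apparatus buys is robustness when part of the zero set is withheld and replaced by residues, which is irrelevant for the full Marchenko statement.

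One repair is needed in your first reduction. The parenthetical claim that the asymptotics of the Herglotz integral ``can be computed \ldots{} by dominated convergence along the imaginary axis'' fails when $\alpha=0$: there $\gamma_n\asymp n^2$ while $a_n\asymp n^2$ (cf.\ Example \ref{exmp}, where $\mu_{0,0}=\tfrac{2}{\pi}\sum n^2\delta_{n^2}$), so $\int(1+t^2)^{-1/2}\,d\mu_{\alpha,\beta}<\infty$ fails and the integral in (\ref{Hrep}) has no finite limit along $z=iy$ --- it necessarily grows like $i\sqrt{iy}$, and no integrable dominating function exists. You flagged this step yourself, and the clean fix avoids expanding the integral altogether: if two potentials produce the same measure $\mu_{\alpha,\beta}$, then by (\ref{Hrep}) the two $m$-functions differ by the real constant $a_1-a_2$, and Everitt's asymptotics (with the same $\alpha$ on both sides, since the boundary conditions are fixed in the statement) force this constant to vanish in either case $\alpha=0$ or $\alpha\in(0,\pi)$. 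Hence $m_{\alpha,\beta}$ is determined by $\mu_{\alpha,\beta}$, and your second reduction finishes the proof.
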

In the notations of Section 2.2, Marchenko's theorem says that the spectrum $\sigma_{\alpha,\beta} = \{a_n\}_{n \in \mathbb{N}}$ and the 
point masses $\{\gamma_n\}_{n \in \mathbb{N}}$ of the corresponding spectral measure (or the norming constants 
$\{\tau_{\alpha}(a_n)\}_{n \in \mathbb{N}}$) provide sufficient spectral data to recover the 
operator uniquely.

Hochstadt and Lieberman observed that one spectrum recovers the potential if the first half of it is known.
\begin{theorem} [{Hochstadt, Lieberman \cite{HL}}] Let $q \in L^1(0,\pi)$. Then $q$ on $(0,\pi/2)$ and 
$\sigma_{\alpha,\beta}$ determine the potential.
\end{theorem}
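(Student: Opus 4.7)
The plan is to argue by contradiction: suppose $q_1,q_2\in L^1(0,\pi)$ coincide on $(0,\pi/2)$, both produce the spectrum $\sigma_{\alpha,\beta}=\{a_n\}_{n\in\mathbb{N}}$, and show $q_1=q_2$ a.e.\ on $(\pi/2,\pi)$. For each $j=1,2$ let $\varphi_j(x,z)$ solve $-\varphi''+q_j\varphi=z\varphi$ with the common initial data $\varphi_j(0,z)=\sin\alpha$, $\varphi_j'(0,z)=\cos\alpha$, so $\varphi_j$ automatically satisfies (\ref{BC1}); then $z=a_n$ iff $\varphi_j(\cdot,a_n)$ also satisfies (\ref{BC2}). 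The central object is the entire function
\begin{equation*}
H(z)\ :=\ \int_0^{\pi}\bigl(q_1(x)-q_2(x)\bigr)\,\varphi_1(x,z)\,\varphi_2(x,z)\,dx\ =\ \int_{\pi/2}^{\pi}\bigl(q_1-q_2\bigr)\,\varphi_1\,\varphi_2\,dx,
\end{equation*}
where the second equality uses $q_1=q_2$ on $(0,\pi/2)$.

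The first step is to show $H(a_n)=0$ for every $n$. From $\varphi_j''=(q_j-z)\varphi_j$ one derives the Lagrange identity $(\varphi_1'\varphi_2-\varphi_1\varphi_2')'=(q_1-q_2)\varphi_1\varphi_2$, so integrating gives $H(z)=\bigl[\varphi_1'\varphi_2-\varphi_1\varphi_2'\bigr]_0^{\pi}$. The boundary term at $0$ vanishes because the shared initial data yields $W(\varphi_1,\varphi_2)(0)=\sin\alpha\cos\alpha-\cos\alpha\sin\alpha=0$. At $z=a_n$ both vectors $(\varphi_j(\pi,a_n),\varphi_j'(\pi,a_n))$ lie in the one-dimensional subspace cut out by $u\cos\beta+u'\sin\beta=0$, hence are parallel, so their Wronskian at $\pi$ vanishes too. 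Thus $H$ vanishes on the entire spectrum $\{a_n\}$.

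The next step is a growth/uniqueness argument. Using the standard asymptotics $|\varphi_j(x,z)|\lesssim (1+|\sqrt z|)^{-1}e^{x|\operatorname{Im}\sqrt z|}$ and the restriction of the integration to $(\pi/2,\pi)$, one sees $\tilde H(\zeta):=H(\zeta^2)$ is even entire with a bound of the form $|\tilde H(\zeta)|\lesssim(1+|\zeta|)^{-2}e^{2\pi|\operatorname{Im}\zeta|}$. The point is that the product-to-sum expansion of $\varphi_1\varphi_2$ together with the reduction of the interval length produces a function whose zero density on $\mathbb{R}$ matches that of $\{\pm\sqrt{a_n}\}$ \emph{exactly} (by the asymptotics (\ref{asy11})--(\ref{asy01}), $\sqrt{a_n}\sim n$ up to a half-integer shift). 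This saturation places $\tilde H$ at the critical threshold of a Phragm\'en--Lindel\"of / Paley--Wiener uniqueness principle (equivalently, one can divide out the canonical product with zeros $\pm\sqrt{a_n}$, compare with $\sin(\pi\zeta)$, and observe that the quotient is of zero exponential type and decays on $\mathbb{R}$, hence vanishes). We conclude $H\equiv 0$.

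Finally, from $H\equiv 0$ on the whole plane we invoke completeness: the family $\{\varphi_1(\cdot,z)\varphi_2(\cdot,z):z\in\mathbb{C}\}$ is total in $L^1(\pi/2,\pi)$, so $(q_1-q_2)\chi_{(\pi/2,\pi)}=0$ a.e., proving $q_1=q_2$ on $(\pi/2,\pi)$ and hence on all of $(0,\pi)$. The main obstacle is the borderline growth estimate in the third step: the exponential type in $\sqrt z$ must be pinned down precisely enough to match the spectral density, and showing that the resulting entire function is forced to be identically zero (rather than a nontrivial multiple of the canonical product of eigenvalues) is where the argument is most delicate.
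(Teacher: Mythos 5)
The paper does not prove this statement: it is quoted verbatim as a classical theorem with a citation to \cite{HL}, so your attempt can only be measured against the standard argument (Hochstadt--Lieberman's original one, or its streamlining by Gesztesy--Simon \cite{GS2} and Horv\'{a}th \cite{HOR}). Your Steps 1 and 2 are sound: with $\varphi_1\equiv\varphi_2$ on $[0,\pi/2]$ (uniqueness for the ODE with shared data and shared potential), the Lagrange identity does give $H(z)=W(\varphi_1,\varphi_2)(\pi)$, and this vanishes at every $a_n$ because both solutions satisfy (\ref{BC2}) there. The fatal problem is the growth step, and it is exactly where you yourself flagged delicacy. Writing $\zeta=\sqrt z$, each $\varphi_j(x,\zeta^2)$ has exponential type $x$ in $\zeta$, so the integrand on $(\pi/2,\pi)$ is dominated by its value near $x=\pi$ and $\tilde H$ has type $2\pi$ --- restricting the integration to the right half-interval does \emph{not} lower the type, because your solutions are anchored at $0$, not at $\pi$. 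Meanwhile $\{\pm\sqrt{a_n}\}$ has $\sqrt{a_n}\sim n$ (up to a half-integer shift), i.e.\ counting function $\sim 2r$ on $[-r,r]$, which is the zero density of a type-$\pi$ function such as $\sin(\pi\zeta)$, not of a type-$2\pi$ function. So the zero density does not ``match exactly'': one spectrum supplies only half the zeros that a type-$2\pi$ function demands. Dividing $\tilde H$ by the canonical product over $\pm\sqrt{a_n}$ (comparable to the characteristic function, of type $\pi$) leaves an entire function of type $\pi$, not of zero type, and type-$\pi$ functions that decay on $\mathbb{R}$ exist in abundance (e.g.\ $\zeta^{-2}\sin(\pi\zeta)$), so no Phragm\'{e}n--Lindel\"{o}f or Paley--Wiener principle forces $\tilde H\equiv 0$. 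This factor-of-two mismatch is precisely the content of the paper's remark that one spectrum carries only half the spectral information; your scheme tries to extract a full recovery from half the data, and the known half of the potential never actually enters your estimates beyond shrinking the domain of integration, which is type-neutral.

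The standard repair keeps your algebra but changes where the solutions are anchored. Let $\psi_j(x,z)$ solve $-u''+q_ju=zu$ with $\psi_j(\pi,z)=\sin\beta$, $\psi_j'(\pi,z)=-\cos\beta$, and set $G(z):=W(\psi_1,\psi_2)(\pi/2)$. At $z=a_n$ each $\psi_j(\cdot,a_n)$ is proportional to $\varphi_j(\cdot,a_n)$, and $\varphi_1=\varphi_2$ on $[0,\pi/2]$, so $G(a_n)=0$; but now each factor $\psi_j(\pi/2,\zeta^2)$ has type only $\pi-\pi/2=\pi/2$ in $\zeta$, so $G(\zeta^2)$ has type $\pi$ --- matched exactly by the zero density of one spectrum. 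Dividing by the characteristic function $W(\psi_1,\varphi_1)(z)$ (entire, type $\pi$ in $\zeta$, zero set exactly $\{a_n\}$) yields a zero-type function which tends to $0$ along $i\mathbb{R}$ by the $m$-function asymptotics, hence $G\equiv 0$. Then $\psi_1'/\psi_1=\psi_2'/\psi_2$ at $x=\pi/2$ for all $z$, i.e.\ the Weyl $m$-functions of the two problems on $(\pi/2,\pi)$ coincide, and Marchenko's theorem (Section 3.1 of the paper, \cite{MAR}) gives $q_1=q_2$ a.e.\ on $(\pi/2,\pi)$; this also replaces your loosely stated final completeness claim with a citation. As written, your proof does not close, and it cannot close without moving the type bookkeeping to the midpoint in this way.
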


\subsection{Some recent results in the finite interval case}

For any discrete real sequence $A = \{x_n\}_{n\in \mathbb{N}}$, $x_n \rightarrow \infty$ the counting function is defined as 
 \begin{equation*}
     n_A(t) := \sum_{x_n\leq t} 1.
 \end{equation*}
 
Gesztesy, Simon and del Rio generalized Levinson's theorem to three spectra.
\begin{theorem} [{del Rio, Gesztesy, Simon \cite{DGS}}] Let $q \in L^1(0,\pi)$. Then 
$S \subset \sigma_{\alpha_1,\beta}\cup\sigma_{\alpha_2,\beta}\cup\sigma_{\alpha_3,\beta}$ satisfying 
 \begin{equation*}
  n_S(t) \geq (2/3)n_{(\sigma_{\alpha_1,\beta}\cup\sigma_{\alpha_2,\beta}\cup\sigma_{\alpha_3,\beta})}(t)
 \end{equation*}
 for sufficiently large $t ~\textgreater~ 0$, determine the potential.
\end{theorem}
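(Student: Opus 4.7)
The plan is to assume two potentials $q$ and $\tilde q$ produce the same spectral data $S$, with each $\lambda\in S$ labeled as belonging to the same $\sigma_{\alpha_i,\beta}$ for both operators, and derive $q=\tilde q$. The strategy is to construct an entire function $F(z)$ of controlled growth that vanishes on $S$, whose vanishing is equivalent to $m_{0,\beta}=\tilde m_{0,\beta}$; Marchenko's theorem then forces $q=\tilde q$.

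I work with entire characteristic functions. Let $\psi_z$ be the solution of (\ref{Scheq}) with $\psi_z(\pi)=\sin\beta$, $\psi_z'(\pi)=-\cos\beta$, and set $f(z)=\psi_z(0)$, $g(z)=\psi_z'(0)$; both are entire of order $1/2$ and exponential type $\pi$ in $w=\sqrt z$. The eigenvalues with left boundary angle $\alpha_i$ are the zeros of
\[
\phi_i(z):=\cos\alpha_i\,f(z)-\sin\alpha_i\,g(z),
\]
with analogues $\tilde f,\tilde g,\tilde\phi_i$ for $\tilde q$. Since the three $\phi_i$ lie in the two-dimensional span of $f,g$, fixed constants $c_1,c_2$ depending only on the $\alpha_j$ give $\phi_3\equiv c_1\phi_1+c_2\phi_2$, and the same identity $\tilde\phi_3\equiv c_1\tilde\phi_1+c_2\tilde\phi_2$ holds. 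A routine expansion produces
\[
\phi_1\tilde\phi_2-\phi_2\tilde\phi_1=\sin(\alpha_1-\alpha_2)\bigl(f\tilde g-g\tilde f\bigr)=:\sin(\alpha_1-\alpha_2)\,F(z),
\]
and $F\equiv 0\Longleftrightarrow g/f\equiv\tilde g/\tilde f\Longleftrightarrow m_{0,\beta}=\tilde m_{0,\beta}$.

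Next, $F$ vanishes on all of $S$: for $\lambda\in S\cap\sigma_{\alpha_i,\beta}$ with $i\in\{1,2\}$ both $\phi_i(\lambda)=\tilde\phi_i(\lambda)=0$, giving $F(\lambda)=0$; for $\lambda\in S\cap\sigma_{\alpha_3,\beta}$ the identical linear relations force $(\phi_1(\lambda),\phi_2(\lambda))$ and $(\tilde\phi_1(\lambda),\tilde\phi_2(\lambda))$ to be parallel, again yielding $F(\lambda)=0$. The eigenvalue asymptotics (\ref{asy11})--(\ref{asy01}) give $n_{\sigma_{\alpha_1,\beta}\cup\sigma_{\alpha_2,\beta}\cup\sigma_{\alpha_3,\beta}}(t)\sim 3\sqrt t/\pi$, so the density hypothesis on $S$ implies $n_F(t)\geq 2\sqrt t/\pi$ for large $t$.

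The main obstacle is the concluding growth-versus-density step. A priori $F$ is entire of exponential type at most $2\pi$ in $w=\sqrt z$, and the zero density $2/\pi$ in $w$ only matches $2/3$ of the Cartwright upper bound, so the type estimate must be sharpened. The key is that by Everitt's theorem $g/f$ and $\tilde g/\tilde f$ share the leading behavior $i\sqrt z$ at infinity, so the dominant exponential parts of $F=f\tilde g-g\tilde f$ cancel and the effective type of $F$ falls strictly below $2\pi$. With this refined bound, a Cartwright-class / de Branges space uniqueness theorem (equivalent here to completeness of an exponential system in the spirit of Horv\'{a}th's work) forces $F\equiv 0$, and Marchenko's theorem concludes.
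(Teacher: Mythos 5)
Your construction is sound, and in fact it is essentially the route of the cited paper (note that this statement is quoted in the survey Section 3.2 without proof, so the comparison is with del Rio--Gesztesy--Simon's original argument): the characteristic functions $\phi_i=\cos\alpha_i\,f-\sin\alpha_i\,g$, the two-dimensionality giving $\phi_3\equiv c_1\phi_1+c_2\phi_2$ with potential-independent $c_i$, the Wronskian-type function $F=f\tilde g-g\tilde f$, the verification that $F$ vanishes on all of $S$ (including the parallel-vectors argument at points of $\sigma_{\alpha_3,\beta}$), and the reduction $F\equiv 0\Rightarrow m_{0,\beta}=\tilde m_{0,\beta}\Rightarrow q=\tilde q$ by Marchenko are all correct and match the original mechanism.

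The gap is in the concluding growth-versus-density step, and your proposed repair does not work. First, the bookkeeping: on $(0,\pi)$ the asymptotics (\ref{asy11})--(\ref{asy01}) give $n_{\sigma_{\alpha_i,\beta}}(t)\sim\sqrt t$, so $n_S(t)\geq 2\sqrt t$ (your $3\sqrt t/\pi$ drops a factor of $\pi$). Since $F(w^2)$ is even in $w=\sqrt z$ of exponential type at most $2\pi$, its zeros arising from $S$ have density at least $2$ per unit length on each half-axis, which \emph{exactly saturates} the Cartwright--Levinson density $\tau/\pi=2$ for type $\tau=2\pi$; the situation is borderline, not ``$2/3$ of the bound,'' so no soft completeness or Cartwright-class uniqueness theorem can decide it. Second, the claimed sharpening of the type is false. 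Writing $F=f\tilde f\,(\tilde m_{0,\beta}-m_{0,\beta})$, the leading exponential factors of $f$ and $\tilde f$ are potential-independent and do \emph{not} cancel; Everitt's asymptotics only give $\tilde m_{0,\beta}-m_{0,\beta}=o(1)$, a subexponential (at best algebraic, of size $O(|z|^{-1/2})$) gain, so the exponential type of $F$ remains $2\pi$ whenever $q\neq\tilde q$ --- indeed, by local Borg--Marchenko uniqueness, super-exponential decay of $\tilde m_{0,\beta}-m_{0,\beta}$ at high energy would already force $q=\tilde q$ near $0$, so the mechanism you invoke cannot occur in the nontrivial case. The borderline must instead be resolved quantitatively: form the canonical product $G(z)=\prod_{\lambda\in S}(1-z/\lambda)$, note $F/G$ is entire, and compare the upper bound $|F(z)|=O\bigl(e^{2\pi|\Im\sqrt z|}\,o(1)/|z|\bigr)$ with lower bounds $|G(z)|\geq c\,e^{2\pi|\Im\sqrt z|}|z|^{-p}$ on expanding circles avoiding the zeros, concluding $F/G\to 0$ and hence $F\equiv 0$ by the maximum principle. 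This algebraic-factor comparison is the technical core of \cite{DGS}, and it is precisely where the constant $2/3$ enters sharply; as written, your final step would not close.
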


Gesztesy and Simon observed that the knowledge of the eigenvalues can be replaced by information on the derivatives of the potential around 
the midpoint of the interval.
    \begin{theorem}[{Gesztesy, Simon \cite{GS2}}]
    Let $q \in L^1(0,\pi)$, $\alpha , \beta \neq 0$ and $q \in C^{2k}(\pi/2-\epsilon, \pi/2 +\epsilon)$ for some $k \in \mathbb{N}$ and 
    $\epsilon ~\textgreater~ 0$. Then $q$ on $(0,\pi/2)$ and $\sigma_{\alpha,\beta}$ except for $k+1$ eigenvalues determine the potential.
    \end{theorem}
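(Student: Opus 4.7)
The plan is to upgrade Hochstadt--Lieberman's Wronskian argument so that $C^{2k}$ smoothness at the midpoint buys exactly $k+1$ additional zeros of the key entire function, enough to absorb the $k+1$ missing eigenvalues. Suppose $q_1$ and $q_2$ are two potentials satisfying the hypotheses with identical data: equal on $(0,\pi/2)$, both $C^{2k}$ on $(\pi/2-\epsilon,\pi/2+\epsilon)$, and with spectra differing in at most $k+1$ values. Since $q_1\equiv q_2$ on a left neighborhood of $\pi/2$ and both are $C^{2k}$ there, one-sided continuity of derivatives forces
$$q_1^{(j)}(\pi/2)=q_2^{(j)}(\pi/2),\qquad j=0,1,\ldots,2k.$$
Let $\phi_i(x,z)$ be the solution of $-u''+q_iu=zu$ with $\phi_i(0,z)=\sin\alpha$, $\phi_i'(0,z)=\cos\alpha$; since the potentials and initial data agree on $(0,\pi/2)$, $\phi_1\equiv\phi_2$ there. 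Define the entire function
$$H(z):=\phi_1(\pi,z)\phi_2'(\pi,z)-\phi_1'(\pi,z)\phi_2(\pi,z).$$
At any common eigenvalue $\lambda$, both $\phi_i(\cdot,\lambda)$ satisfy (\ref{BC2}), so the vectors $(\phi_i(\pi,\lambda),\phi_i'(\pi,\lambda))$ are parallel and $H(\lambda)=0$. A standard Green's identity computation yields
$$H(z)=\int_{\pi/2}^{\pi}[q_2(x)-q_1(x)]\,\phi_1(x,z)\phi_2(x,z)\,dx.$$

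Next I would show $H$ has extra polynomial decay. The function $H$ is of exponential order $1/2$ and type $\pi$. Writing $r:=q_1-q_2$, which extends to a function on $(\pi/2,\pi)$ with $r^{(j)}(\pi/2)=0$ for $j\leq 2k$, and using the asymptotic $\phi_i(x,z)=\sin\alpha\cos(\sqrt{z}x)+O(1/\sqrt{z})$ together with the product-to-sum expansion of $\phi_1\phi_2$ into terms oscillating at frequencies $0$ and $2\sqrt{z}$, one performs $2k+1$ successive integrations by parts. The boundary contributions at $\pi/2$ vanish thanks to the derivative matching, and each step contributes a factor $(2\sqrt{z})^{-1}$, yielding
$$|H(z)|\leq \frac{C\,e^{\pi|\operatorname{Im}\sqrt{z}|}}{|z|^{k+1}},\qquad |z|\to\infty,$$
uniformly outside small discs around the eigenvalues. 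Now set $\Phi(z):=\prod_n(1-z/a_n)$ taken over the full spectrum; this is entire of order $1/2$ and type $\pi$ by the asymptotics (\ref{asy11}). Let $P(z)$ be a polynomial of degree $\leq k+1$ vanishing at the missing eigenvalues. Then $F:=PH/\Phi$ is entire, and the decay of $H$, cancellation by $\Phi$, and compensation of the missing zeros by $P$ combine to make $F$ bounded on $\mathbb{C}$. Phragm\'en--Lindel\"of in the upper and lower half-planes then forces $F\equiv 0$, hence $H\equiv 0$.

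Finally, $H(z)\equiv 0$ means $(\phi_1(\pi,z),\phi_1'(\pi,z))$ and $(\phi_2(\pi,z),\phi_2'(\pi,z))$ are linearly dependent for every $z$; combined with identical initial data on $(0,\pi/2)$, uniqueness for the Cauchy problem at a point where $\phi_1\neq 0$ propagates $\phi_1\equiv\phi_2$ throughout $[0,\pi]$. Subtracting the two Schr\"odinger equations gives $(q_1-q_2)\phi_1=0$, and since $\phi_1(\cdot,z)$ has only discrete zeros, $q_1=q_2$ a.e. The main obstacle is the delicate bookkeeping in the decay estimate: one must show that $C^{2k}$ smoothness produces exactly $k+1$ powers of $|z|^{-1}$ decay (not $k$, not $k+\tfrac{1}{2}$), so that it matches precisely the $k+1$ admissible missing eigenvalues; this hinges on the $2\sqrt{z}$ (rather than $\sqrt{z}$) oscillation frequency of $\phi_1\phi_2$, which converts two vanishing derivatives of $r$ into one power of $|z|^{-1}$. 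Equally delicate is arranging the Phragm\'en--Lindel\"of application so that the bound is uniform in the sectors where $\Phi$ and $H$ may individually grow rapidly while their quotient remains controlled.
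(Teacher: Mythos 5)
First, note that the paper you are working from does not prove this statement at all: it appears in Section~3.2 as a survey item, quoted with a citation to Gesztesy--Simon \cite{GS2}, so your proposal must be measured against their actual argument. Against that benchmark, your plan has a fatal quantitative flaw: a mismatch of exponential type. Your function $H(z)=W(\phi_1,\phi_2)(\pi)=\int_{\pi/2}^{\pi}(q_2-q_1)\phi_1\phi_2\,dx$ involves $\phi_1(x,z)\phi_2(x,z)\sim\sin^2\alpha\,\cos^2(\sqrt z\,x)$ with $x$ ranging up to $\pi$, so on the ray $z=-\kappa^2$ it grows like $\int_{\pi/2}^{\pi}|r(x)|e^{2\kappa x}dx$, i.e.\ $H$ has type $2\pi$ in $\sqrt z$ whenever $r=q_1-q_2$ does not vanish near $\pi$. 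The product $\Phi(z)=\prod_n(1-z/a_n)$ over a single spectrum ($a_n\sim n^2$) has type only $\pi$ in $\sqrt z$, so $F=PH/\Phi$ grows like $|z|^{k+1}e^{\pi|\operatorname{Im}\sqrt z|}$ and Phragm\'en--Lindel\"of cannot force $F\equiv 0$: one spectrum simply does not supply enough zeros to divide out a Wronskian formed at the right endpoint. The classical Hochstadt--Lieberman mechanism avoids this precisely by forming the Wronskian at the \emph{midpoint} of the two solutions $\psi_i$ satisfying the boundary condition at $\pi$; that function has type $2\cdot(\pi/2)=\pi$, matching the characteristic function, and it vanishes at common eigenvalues because both $\psi_i(\cdot,\lambda)$ match the same left data at $\pi/2$. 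This is the balance that lets half a potential trade against one spectrum, and your choice of evaluation point destroys it.

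Your decay estimate also fails on its own terms, in three ways. First, $q\in C^{2k}$ is assumed only on $(\pi/2-\epsilon,\pi/2+\epsilon)$; on the rest of $(\pi/2,\pi)$ the difference $r$ is merely $L^1$, so the $2k+1$ integrations by parts over $[\pi/2,\pi]$ are not available, and the boundary terms at $x=\pi$ (where no derivatives of $r$ vanish) would in any case carry the full $e^{2\pi|\operatorname{Im}\sqrt z|}$ growth. Second, the zero-frequency term $\tfrac{1}{2}\sin^2\alpha\int_{\pi/2}^{\pi}r\,dx$ produced by $\cos^2(\sqrt z\,x)=\tfrac12(1+\cos(2\sqrt z\,x))$ does not decay at all. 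Third, even granting global smoothness, $2k+1$ integrations by parts at frequency $2\sqrt z$ yield $|z|^{-(2k+1)/2}=|z|^{-(k+1/2)}$ --- exactly the half-power shortfall you flagged as the delicate point, and it is not repairable by this route. Gesztesy and Simon obtain the needed $k+1$ full powers differently: they compare the Weyl $m$-functions of the two problems on $(\pi/2,\pi)$ along $z=-\kappa^2$, where the asymptotic expansion $m(-\kappa^2)\sim-\kappa-\sum_j c_j(\pi/2)\kappa^{-j-1}$ has coefficients determined by $q^{(j)}(\pi/2)$, $j\le 2k$ --- a purely local statement, immune to the lack of smoothness away from the midpoint --- giving $m_1-m_2=O(\kappa^{-2k-2})$, which exactly cancels the degree-$(k+1)$ polynomial in $z=-\kappa^2$ contributed by the missing eigenvalues; the conclusion $m_1\equiv m_2$ then yields $q_1=q_2$ on $(\pi/2,\pi)$ by Marchenko's theorem. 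Finally, your last step is also unsound as written: $H\equiv 0$ gives proportionality of the boundary vectors at $\pi$ for each $z$, but ``uniqueness for the Cauchy problem'' cannot propagate $\phi_1\equiv\phi_2$ across $(\pi/2,\pi)$, since there the two functions solve \emph{different} equations until $q_1=q_2$ is established --- which is the very point at issue.
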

    In the same paper, they generalized Hochstadt-Lieberman theorem.
 \begin{theorem} [{Gesztesy, Simon \cite{GS2}}] Let $q \in L^1(0,\pi)$ and $\pi/2 ~\textless~ a ~\textless~ \pi$. Then $q$ on $(0,a)$ and
 $S \subset \sigma_{\alpha,\beta}$ satisfying 
 \begin{equation*}
  n_S(t) \geq 2(1-a/\pi)n_{\sigma_{\alpha,\beta}}(t) + a/\pi - 1/2
 \end{equation*}
 for sufficiently large $t ~\textgreater~ 0$, determine the potential.
\end{theorem}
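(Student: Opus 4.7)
The plan is to argue by contradiction: suppose $q_1, q_2 \in L^1(0, \pi)$ both agree with the given $q$ on $(0, a)$ and both admit $S$ as a subset of their respective spectra $\sigma_{\alpha,\beta}$. The goal is to show $q_1 = q_2$ a.e.\ on $(a, \pi)$. For each $j \in \{1,2\}$, let $\psi_j(x, z)$ be the solution of $-\psi'' + q_j\psi = z\psi$ satisfying (\ref{BC2}) at $\pi$, namely $\psi_j(\pi, z) = \sin\beta$ and $\psi_j'(\pi, z) = -\cos\beta$. I introduce the entire function
\begin{equation*}
F(z) := W(\psi_1, \psi_2)(0) = \psi_1(0, z)\psi_2'(0, z) - \psi_1'(0, z)\psi_2(0, z).
\end{equation*}
Since $q_1 = q_2$ on $(0, a)$, both $\psi_j$ solve the same ODE there, so the Wronskian is conserved on $[0, a]$ and $F(z) = W(\psi_1, \psi_2)(a)$. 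The Lagrange identity on $[a, \pi]$ together with $W(\psi_1, \psi_2)(\pi) = 0$ yields the integral representation $F(z) = \int_a^\pi (q_1 - q_2)(x)\, \psi_1(x, z)\, \psi_2(x, z)\, dx$.

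Two properties of $F$ follow quickly. First, for each $\lambda \in S$, each $\psi_j(\lambda, \cdot)$ is an eigenfunction and so satisfies (\ref{BC1}) at $0$, forcing $(\psi_j(\lambda, 0), \psi_j'(\lambda, 0))$ to be a scalar multiple of $(\sin\alpha, \cos\alpha)$ for $j = 1, 2$; consequently $F(\lambda) = 0$. Second, running the standard Volterra/transfer-matrix bound backward from $\pi$ gives $|\psi_j(x, z)| \le C e^{(\pi - x)|\operatorname{Im}\sqrt{z}|}$ on $[a, \pi]$, so the integral representation yields the key growth estimate $|F(z)| \le C e^{2(\pi - a)|\operatorname{Im}\sqrt z|}$, with $F$ polynomially bounded on $\mathbb{R}$ by the known asymptotics of $\psi_j$. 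Passing to $w = \sqrt z$, the even entire function $\tilde F(w) := F(w^2)$ is therefore of exponential type at most $2(\pi - a)$ and polynomially bounded on the real axis, i.e.\ in the Cartwright class.

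The heart of the argument is deducing $F \equiv 0$ from the counting hypothesis; this is the main obstacle. Using $n_{\sigma_{\alpha,\beta}}(t) = \sqrt{t} + O(1)$ from (\ref{asy11})-(\ref{asy01}), the hypothesis rewrites as
\begin{equation*}
n_{\tilde F}^{\mathbb{R}}(R) \ge 2\, n_S(R^2) \ge \frac{4(\pi - a)}{\pi}\, R + (2a/\pi - 1) + O(1)
\end{equation*}
as $R \to \infty$. The leading coefficient matches exactly the maximal zero density $2\tau/\pi$ allowed for a Cartwright-class function of exponential type $\tau = 2(\pi - a)$, while the strictly positive constant $2a/\pi - 1$ (using $a > \pi/2$) supplies the decisive extra margin at the critical density. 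I would close this step with a Krein-type density theorem for entire functions of Cartwright class, or equivalently the Beurling-Malliavin completeness of the exponential system $\{e^{\pm i\sqrt{\lambda_k}\, t}\}_{\lambda_k \in S}$ in $L^2(-(\pi - a), \pi - a)$, to conclude $\tilde F \equiv 0$.

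Once $F \equiv 0$, the Wronskian $W(\psi_1, \psi_2)$ vanishes on all of $[0, a]$, so $\psi_1(\cdot, z)$ and $\psi_2(\cdot, z)$ are linearly dependent; evaluating at $x = a$, the ratios $\psi_j'(a, z)/\psi_j(a, z)$ coincide as meromorphic functions of $z$. These are precisely the Weyl-Titchmarsh $m$-functions at $a$ of the two Schr\"{o}dinger operators on $(a, \pi)$ with Dirichlet condition at $a$ and (\ref{BC2}) at $\pi$. Marchenko's theorem applied on the shorter interval $(a, \pi)$ then yields $q_1 = q_2$ a.e.\ on $(a, \pi)$, which combined with the standing hypothesis $q_1 = q_2$ on $(0, a)$ completes the argument.
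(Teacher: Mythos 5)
This statement is quoted background in the paper (it is cited from \cite{GS2} and not proved there), so the comparison is with Gesztesy and Simon's original argument. Your architecture is in fact exactly theirs: the Wronskian-type entire function $F(z)=W(\psi_1,\psi_2)$, its constancy on $[0,a]$ where $q_1=q_2$, the integral representation $F(z)=\int_a^\pi(q_1-q_2)\psi_1\psi_2\,dx$, the vanishing of $F$ on $S$, the type bound $2(\pi-a)$ in $w=\sqrt z$, and the endgame (equality of the $m$-functions at $a$, then Marchenko on $(a,\pi)$) are all correct and are the right skeleton.

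The genuine gap is precisely the step you yourself flag as the main obstacle: the tools you propose cannot close it. Your hypothesis puts the zero-counting function of $\tilde F$ at \emph{exactly} the critical density for Cartwright functions of type $\tau=2(\pi-a)$ --- leading term $4(1-a/\pi)R$ against the maximal density $2\tau/\pi=4(1-a/\pi)$ --- with only the additive constant $a/\pi-1/2$ as margin. Levinson/Krein-type density theorems and the Beurling--Malliavin theorem are blind to additive constants: $D^*$ and the completeness radius are unchanged by adding or deleting finitely many points, and $R(\Lambda)=2\pi D^*(\Lambda)$ gives no information about completeness at the critical length itself, which is where you land. Worse, your reduction $n_{\sigma_{\alpha,\beta}}(t)=\sqrt t+O(1)$ already destroys the margin before the density theorem is invoked: the difference $n_{\sigma_{\alpha,\beta}}(t)-\sqrt t$ oscillates with amplitude of order $1$ between consecutive eigenvalues, so after multiplying by $2(1-a/\pi)$ the $O(1)$ error can exceed $a/\pi-1/2<1/2$, making the hypothesis and its negation indistinguishable in your reformulation. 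What closes the argument in \cite{GS2} is a quantitative Jensen-formula estimate that keeps the comparison of $n_S$ with $n_{\sigma_{\alpha,\beta}}$ intact and exploits the decay of $F$ on the real axis (each $\psi_j$ is $O(|z|^{-1/2})$ on $\mathbb{R}_+$ when $\beta=0$, so $|F(x)|=O(x^{-1})$; this decay is the source of the $1/2$) together with the precise eigenvalue asymptotics (\ref{asy11})--(\ref{asy01}); your ``polynomially bounded on $\mathbb{R}$'' discards exactly this information. A secondary slip: for type $2(\pi-a)$ the relevant completeness statement would be for $\{e^{\pm i\sqrt{\lambda_k}\,t}\}_{\lambda_k\in S}$ in $L^2(-2(\pi-a),2(\pi-a))$, an interval of length $4(\pi-a)$, not $L^2(-(\pi-a),\pi-a)$ --- compare the $e^{\pm 2i\sqrt{\lambda_n}x}$ normalization in Theorem \ref{HORV}.
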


Amour, Raoux and Faupin proved similar results using extra information on smoothness of the potential.
 \begin{theorem} [{Amour, Raoux \cite{AR}}] Let $\alpha, \beta_1,\beta_2 \neq 0$, $p\in [1,\infty)$, $q_1,q_2 \in L^1(0,\pi)$, 
 $q_1-q_2\in L^p(a,\pi)$ and $\pi/2 ~\textless~ a ~\textless~ \pi$. If $q_1=q_2$ a.e. on $(0,a)$ and
 $S \subset \sigma_{\alpha,\beta_1}(q_1) \cap \sigma_{\alpha,\beta_2}(q_2)$ satisfies 
 \begin{equation*}
  2(1-a/\pi)n_{\sigma}(t) + C \geq n_S(t) \geq  2(1-a/\pi)n_{\sigma}(t) + 1/(2p) + 2a/\pi - 2
 \end{equation*}
 for a real number $C$ and sufficiently large $t ~\textgreater~ 0$, where $\sigma$ denotes either of $\sigma_{\alpha,\beta_k}(q_k)$, 
 then $q_1=q_2$ a.e. on $(0,\pi)$.
\end{theorem}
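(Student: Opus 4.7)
The statement has the shape of a two-potential Hochstadt--Lieberman-type theorem, so the natural plan is to build an entire function $G(z)$ vanishing on $S$, control its growth by the length $\pi-a$ of the unknown interval with a refinement coming from the $L^p$-hypothesis, force $G\equiv 0$ by comparing zero-density of $S$ with the permissible zero density, and finally deduce $q_1=q_2$ a.e.\ on $(a,\pi)$. Let $\varphi_k(x,z)$ solve $-u''+q_ku=zu$ with the common initial data at $0$ coming from~\eqref{BC1}. Since $q_1=q_2$ a.e.\ on $(0,a)$, uniqueness forces $\varphi_1\equiv\varphi_2$ on $[0,a]$. Integrating the pointwise identity $(q_1-q_2)\varphi_1\varphi_2=(\varphi_1'\varphi_2-\varphi_1\varphi_2')'$ over $(0,\pi)$, using the vanishing of the Wronskian at $0$ from the shared $\alpha$ and the relation $\varphi_k'(\pi,\lambda)=-\cot\beta_k\,\varphi_k(\pi,\lambda)$ on $\sigma_{\alpha,\beta_k}(q_k)$ (available since $\sin\beta_k\neq 0$), one obtains that
\[
G(z):=\int_a^\pi(q_1-q_2)(x)\varphi_1(x,z)\varphi_2(x,z)\,dx-(\cot\beta_2-\cot\beta_1)\varphi_1(\pi,z)\varphi_2(\pi,z)
\]
is entire in $z$ and vanishes at every $\lambda\in\sigma_{\alpha,\beta_1}(q_1)\cap\sigma_{\alpha,\beta_2}(q_2)\supset S$.

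\textbf{Growth estimates, the main obstacle.} Using the transformation-operator representation of $\varphi_k$ one has the standard bound $|\varphi_k(x,z)|\le C(1+|\sqrt z|)^{-1}e^{|\Im\sqrt z|x}$, which gives the boundary term as $O((1+|\sqrt z|)^{-2}e^{2|\Im\sqrt z|\pi})$. Expanding the product $\varphi_1\varphi_2$ via product-to-sum reduces the integral term to oscillating integrals of $q_1-q_2$ against trigonometric kernels supported on $(a,\pi)$. On the real axis $\lambda=\sqrt z\in\mathbb{R}$, applying Hausdorff--Young (or H\"older when $p\ge 2$) with the hypothesis $q_1-q_2\in L^p(a,\pi)$ upgrades the trivial $L^1$-bound to polynomial decay of order $|\lambda|^{-(2+1/p')}$ with $p'=p/(p-1)$, and Phragm\'en--Lindel\"of propagates this to the full plane with exponential type $2(\pi-a)$ in $\lambda$. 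This $1/p'$ gain over the pure $L^1$ case is precisely what produces the $1/(2p)$ improvement in the density threshold; extracting it cleanly while simultaneously keeping track of the $\beta_k$-dependent boundary term is the technical crux.

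\textbf{Counting and conclusion.} Viewed as an entire function of $\lambda=\sqrt z$, $G$ lies in a Cartwright-type class of exponential type $2(\pi-a)$, so Jensen's inequality bounds its zero-counting function in $|\lambda|\le R$ by $2(\pi-a)R/\pi+O(1)$, with the polynomial decay contributing a further $-(2+1/p')\log R$ correction. By the spectral asymptotics \eqref{asy11}, $n_\sigma(t)\sim\sqrt t/\pi$, so in the $\lambda$-scale the assumption $n_S(t)\ge 2(1-a/\pi)n_\sigma(t)+1/(2p)+2a/\pi-2$ yields a zero count strictly exceeding this Cartwright bound as $R\to\infty$, while the companion upper bound $n_S(t)\le 2(1-a/\pi)n_\sigma(t)+C$ ensures that $S$ remains within the Cartwright framework without degenerate accumulation. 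Hence $G\equiv 0$, i.e.\
\[
\int_a^\pi(q_1-q_2)(x)\varphi_1(x,z)\varphi_2(x,z)\,dx=(\cot\beta_2-\cot\beta_1)\varphi_1(\pi,z)\varphi_2(\pi,z)
\]
for every $z\in\mathbb{C}$. A standard completeness argument for the family of products $\{\varphi_1(\cdot,z)\varphi_2(\cdot,z)\}_{z\in\mathbb{C}}$ in $L^{p'}(a,\pi)$, in the spirit of Horv\'ath's framework mentioned in the introduction, then forces $q_1=q_2$ a.e.\ on $(a,\pi)$, which together with the hypothesis on $(0,a)$ yields $q_1=q_2$ a.e.\ on $(0,\pi)$.
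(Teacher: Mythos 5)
First, a caveat: the paper you were asked about does not prove this statement at all — it is quoted verbatim from Amour--Raoux \cite{AR} as background in Section 3.2 — so your proposal can only be measured against the standard argument in the literature. Your skeleton is indeed the right one (the Wronskian identity $(\varphi_1'\varphi_2-\varphi_1\varphi_2')'=(q_1-q_2)\varphi_1\varphi_2$, vanishing of $W$ at $0$ and constancy on $(0,a)$, an entire function $G$ vanishing on $S$, a growth-versus-zero-counting argument, and a completeness endgame). However, several of your quantitative claims are wrong in ways that break the proof. The estimate $|\varphi_k(x,z)|\le C(1+|\sqrt z|)^{-1}e^{|\Im\sqrt z|x}$ is the Dirichlet bound; here the hypothesis is $\alpha\neq 0$, so $\varphi_k(x,z)=\sin\alpha\,\cos(\sqrt z\,x)+O\bigl(|\sqrt z|^{-1}e^{|\Im\sqrt z|x}\bigr)$ and there is no decay factor at all. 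Consequently your claimed decay $|\lambda|^{-(2+1/p')}$ of the integral term fails: on the real $\lambda$-axis, $\varphi_1\varphi_2\sim \sin^2\!\alpha\,\cos^2(\lambda x)$ has the non-oscillatory component $\tfrac12$, so the integral term tends to the constant $\tfrac12\sin^2\!\alpha\int_a^\pi(q_1-q_2)\,dx$; only the oscillatory piece $\int_a^\pi(q_1-q_2)\cos(2\lambda x)\,dx$ enjoys the Hausdorff--Young gain $O(|\lambda|^{-1/p'})$. Worse, $G$ does \emph{not} have exponential type $2(\pi-a)$ in $\lambda=\sqrt z$: both the integral term (kernel supported up to $x=\pi$) and the boundary term $\varphi_1(\pi,z)\varphi_2(\pi,z)$ have type $2\pi$. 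The type-$2(\pi-a)$ object only appears after dividing $G$ by a canonical product built from the known eigenvalues (or from $S$ itself) — the step that is the actual engine of the Gesztesy--Simon/Amour--Raoux arguments and which your proposal never performs.

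Even granting that division, your counting step misreads the situation: since $n_\sigma(t)\sim\sqrt t$, the hypothesis $n_S(t)\ge 2(1-a/\pi)n_\sigma(t)+1/(2p)+2a/\pi-2$ gives $S$ \emph{exactly} the critical Cartwright density for the quotient function, exceeding it only by an additive constant — there is no ``zero count strictly exceeding the Cartwright bound,'' and Jensen alone yields nothing. The theorem is decided precisely at critical density by the bookkeeping of those additive constants (each surplus zero trades against one polynomial factor of growth) against the $O(|\lambda|^{-1/p'})$ decay of the oscillatory integral; this is the sole source of the $1/(2p)$, and your accounting, resting on the erroneous decay and type claims, cannot produce it. Finally, the endgame has a gap: $G\equiv 0$ does not orthogonalize $q_1-q_2$ against the products $\varphi_1\varphi_2$ because of the boundary term. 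One must first kill it, e.g.\ by letting $z\to-\infty$: then $\int_a^\pi(q_1-q_2)\varphi_1\varphi_2\,dx=o\bigl(e^{2\sqrt{|z|}\,\pi}\bigr)$ while the boundary term is $\sim\tfrac14\sin^2\!\alpha\,(\cot\beta_2-\cot\beta_1)e^{2\sqrt{|z|}\,\pi}$, forcing $\cot\beta_1=\cot\beta_2$; only afterwards does a completeness argument for the resulting cosine-type system legitimately give $q_1=q_2$ a.e.\ on $(a,\pi)$.
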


\begin{theorem}[{Amour, Faupin, Raoux \cite{AFR}}]
    Let $\alpha, \beta_1, \beta_2 \neq 0$, $k\in \{0,1,2\}$, $p\in [1,\infty)$, $q_1,q_2 \in W^{k,1}(0,\pi)$, $q_1-q_2\in W^{k,p}(a,\pi)$ 
    and $\pi/2 ~\textless~ a ~\textless~ \pi$. If $q_1=q_2$ on $(0,a)$ and 
    $S \subset \sigma_{\alpha,\beta_1}(q_1) \cap \sigma_{\alpha,\beta_2}(q_2)$ satisfying 
 \begin{equation*}
  n_S(t) \geq  2(1-a/\pi)n_{\sigma}(t) - k/2 + 1/(2p) + a/\pi - 3/2
 \end{equation*}
 for sufficiently large $t ~\textgreater~ 0$, where $\sigma$ denotes either of $\sigma_{\alpha,\beta_k}(q_k)$, 
 then $q_1=q_2$ a.e. on $(0,\pi)$.
    \end{theorem}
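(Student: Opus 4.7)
The plan is to extend the Hochstadt--Lieberman entire-function method, as refined by Gesztesy--Simon and Amour--Raoux, and to pay for the sharper counting-function threshold by $k$ integrations by parts (exploiting $q_1-q_2\in W^{k,p}(a,\pi)$) together with a Hausdorff--Young estimate. Let $\phi_j(x,z)$ denote the solution of $-u''+q_ju=zu$ with the common left initial data $\phi_j(0,z)=\sin\alpha$, $\phi_j'(0,z)=\cos\alpha$. Since $q_1\equiv q_2$ on $(0,a)$, we have $\phi_1\equiv\phi_2$ on $[0,a]$, so the Wronskian $W(\phi_1,\phi_2)(\cdot,z)$ vanishes identically in $z$ at $x=a$.

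First I would build an entire function $F(z)$, assembled from $\phi_j(\pi,z)$, $\phi_j'(\pi,z)$ and $\beta_1,\beta_2$, that vanishes on $\sigma_{\alpha,\beta_1}(q_1)\cap\sigma_{\alpha,\beta_2}(q_2)$, hence on $S$, and that admits, via the Green identity $(q_2-q_1)\phi_1\phi_2=W(\phi_1,\phi_2)'$ integrated from $a$ to $\pi$, the representation
\begin{equation*}
 F(z)\;=\;C(z)\int_a^{\pi}(q_1-q_2)(x)\,\phi_1(x,z)\,\phi_2(x,z)\,dx,
\end{equation*}
the boundary contribution at $x=a$ dropping out by the matching above. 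When $\beta_1\neq\beta_2$ a boundary correction at $x=\pi$ must be included to keep $F$ of exponential type $2(\pi-a)$ rather than $2\pi$. Standard Liouville/Paley--Wiener bounds for $\phi_j$ then confirm that $F$ has type at most $2(\pi-a)$ in $\sqrt{z}$. The Sobolev assumption next lets me integrate by parts $k$ times in the integral, transferring derivatives from $\phi_1\phi_2$ onto $q_1-q_2$ and gaining a factor $|\sqrt{z}|^{-k}$; a Hausdorff--Young estimate on the residual $L^p$ pairing contributes an additional decay $|\sqrt{z}|^{-1/p'}$, with $1/p'=1-1/p$. Collecting all polynomial prefactors yields a bound of the form
\begin{equation*}
 |F(z)|\;\le\;\frac{C\,e^{2(\pi-a)\,|\mathrm{Im}\sqrt{z}|}}{|z|^{(k+1/p')/2 - c_0}},
\end{equation*}
where $c_0$ depends explicitly on $a/\pi$ and on the solution asymptotics at $\pi$.

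Next I would compare $F$ with a canonical product $P(z)$ whose zero set is $S$. The counting-function hypothesis is tuned so that $P$ has enough density to dominate the stated growth of $F$: the density $2(1-a/\pi)$ balances the exponential type, and the offset $-k/2+1/(2p)+a/\pi-3/2$ absorbs exactly the $k$ integrations by parts $(-k/2)$, the Hausdorff--Young gain $(+1/(2p))$, and the polynomial bookkeeping $(a/\pi-3/2)$ from $C(z)$ and the asymptotics at $\pi$. A Lindel\"of/Levin indicator argument then forces $F/P\equiv 0$, and hence $F\equiv 0$. Completeness of $\{\phi_1(\cdot,z)\phi_2(\cdot,z):z\in\mathbb{C}\}$ in $L^1(a,\pi)$---or, equivalently, an $m$-function uniqueness argument at the interior base point $a$---then converts this into $q_1=q_2$ a.e.\ on $(a,\pi)$, completing the proof.

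The hard part is the constant-by-constant bookkeeping that produces exactly the stated correction $-k/2+1/(2p)+a/\pi-3/2$: every half-integer polynomial factor coming from the solution asymptotics at $\pi$, from the prefactor $C(z)$, from the $\beta_1\neq\beta_2$ boundary correction, and from the Hausdorff--Young step has to be tracked through a sharp density theorem and converted into the threshold for $n_S$. The restriction $k\in\{0,1,2\}$ is presumably imposed because beyond $k=2$ the boundary terms produced by further integrations by parts stop being controllable under the $W^{k,1}$ regularity of $q_1,q_2$.
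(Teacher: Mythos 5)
First, a point of comparison: the paper does not prove this statement at all. It appears in Section 3.2 as survey material, quoted verbatim from Amour--Faupin--Raoux \cite{AFR}, so there is no in-paper argument to measure your proposal against; the relevant benchmark is the entire-function method of \cite{GS2,AR,AFR}, which your outline does emulate in its overall architecture (Wronskian identity, $k$-fold integration by parts, Hausdorff--Young, zero-density comparison, then an $m$-function uniqueness step).

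Within that architecture, however, your plan has a concrete flaw at the decisive step, namely the exponential-type bookkeeping. You anchor both solutions at the left endpoint, $\phi_j(0,z)=\sin\alpha$, $\phi_j'(0,z)=\cos\alpha$, and integrate $W(\phi_1,\phi_2)'=(q_2-q_1)\phi_1\phi_2$ over $(a,\pi)$ to get $F(z)=C(z)\int_a^{\pi}(q_1-q_2)\phi_1\phi_2\,dx$. But $\phi_j(x,z)\sim\sin\alpha\,\cos(\sqrt{z}\,x)$, so $\phi_1(x,z)\phi_2(x,z)$ has exponential type $2x$ in $\sqrt{z}$, and since the unknown window $(a,\pi)$ is adjacent to the endpoint \emph{far} from your anchoring point, the integral has type $2\pi$, not $2(\pi-a)$; no Paley--Wiener or Liouville bound repairs this. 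Meanwhile the counting hypothesis $n_S(t)\geq 2(1-a/\pi)\,n_{\sigma}(t)+O(1)$, with $n_{\sigma}(t)\sim\sqrt{t}$, only supplies a zero set whose canonical product has type $2(\pi-a)$ in $\sqrt{z}$, so the Lindel\"of/Levin comparison cannot force a type-$2\pi$ function vanishing on $S$ to vanish identically --- the density deficit $2a/\pi$ is exactly what breaks. The standard fix, and what the actual proof does following \cite{GS2}, is to anchor at the right endpoint: take $u_j(\pi,z)=\sin\beta_j$, $u_j'(\pi,z)=-\cos\beta_j$, so that $u_1u_2$ has type $2(\pi-x)\leq 2(\pi-a)$ on the window, and evaluate the Wronskian at $x=a$:
\begin{equation*}
F(z):=W(u_1,u_2)(a,z)=\sin(\beta_2-\beta_1)+\int_a^{\pi}(q_1-q_2)(x)\,u_1(x,z)\,u_2(x,z)\,dx .
\end{equation*}
This $F$ vanishes at each $\lambda\in S$ because on $[0,a]$ both $u_j(\cdot,\lambda)$ are eigenfunctions solving the same equation ($q_1=q_2$ there) with the same $\alpha$-condition at $0$, hence are proportional on $[0,a]$. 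Note also that this exposes a second misconception in your sketch: the correction needed when $\beta_1\neq\beta_2$ is just the constant $\sin(\beta_2-\beta_1)=W(u_1,u_2)(\pi,z)$, and it has nothing to do with lowering the type from $2\pi$ to $2(\pi-a)$ --- the type is governed entirely by where the solutions are normalized. With this repair, your remaining steps (integration by parts yielding $|z|^{-k/2}$, the Hausdorff--Young gain, the density comparison forcing $F\equiv 0$, and then $W(u_1,u_2)(a,\cdot)\equiv 0$ giving $u_1'(a,z)/u_1(a,z)\equiv u_2'(a,z)/u_2(a,z)$, whence $q_1=q_2$ on $(a,\pi)$ by Marchenko uniqueness on that interval) are consistent with the known argument, though the constant-by-constant accounting for the stated threshold would of course still have to be carried out.
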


    \begin{theorem}[{Amour, Faupin, Raoux \cite{AFR}}]
    Let $\alpha, \beta_1, \beta_2 \neq 0$, $k\in \{0,1,2\}$, $p\in [1,\infty)$, $q_1,q_2 \in W^{k,1}(0,\pi)$, $q_1-q_2\in W^{k,p}(a,\pi)$ 
    and $\pi/2 ~\textless~ a ~\textless~ \pi$. If $q_1=q_2$ on $(0,a)$ and 
    $S \subset \sigma_{\alpha,\beta_1}(q_1) \cap \sigma_{\alpha,\beta_2}(q_2)$ satisfying 
 \begin{equation*}
  2(1-a/\pi)n_{\sigma}(t) + C \geq n_S(t) \geq  2(1-a/\pi)n_{\sigma}(t) - k/2 + 1/(2p) + 2a/\pi - 2
 \end{equation*}
 for sufficiently large $t ~\textgreater~ 0$, where $\sigma$ denotes either of $\sigma_{\alpha,\beta_k}(q_k)$, 
 then $q_1=q_2$ a.e. on $(0,\pi)$.
    \end{theorem}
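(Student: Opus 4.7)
The plan is to follow the Hochstadt--Lieberman--Gesztesy--Simon strategy: construct an entire function $F(z)$ that vanishes on the common eigenvalue set $S$, bound its exponential type using the hypothesis $q_1=q_2$ on $(0,a)$, sharpen this bound using the Sobolev regularity $q_1-q_2\in W^{k,p}(a,\pi)$, and invoke a zero-density theorem for entire functions of exponential type to force $F\equiv 0$. Unwinding the construction identifies the Weyl $m$-functions of $q_1$ and $q_2$ on the sub-interval $(a,\pi)$, and Marchenko's theorem applied there yields $q_1=q_2$ a.e.\ on $(a,\pi)$, which combined with the hypothesis gives equality on all of $(0,\pi)$.

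For the construction, let $\phi(x,z)$ denote the common solution on $(0,a)$ satisfying the boundary condition (\ref{BC1}) (well defined since $q_1=q_2$ there), and let $\psi_k(x,z)$ denote the solution of $-u''+q_ku=zu$ on $(0,\pi)$ satisfying the right endpoint condition with parameter $\beta_k$. Set
\begin{equation*}
F(z):=\psi_1(a,z)\psi_2'(a,z)-\psi_1'(a,z)\psi_2(a,z).
\end{equation*}
For each $\lambda\in S$, proportionality of $\phi(\cdot,\lambda)$ and $\psi_k(\cdot,\lambda)$ on $(0,a)$ makes the Cauchy data $(\psi_k(a,\lambda),\psi_k'(a,\lambda))$ collinear in $k=1,2$, so $F(\lambda)=0$. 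Standard transmutation-kernel asymptotics give $|\psi_k(a,z)|,|\psi_k'(a,z)|\lesssim\langle\sqrt{z}\rangle\,e^{(\pi-a)|\mathrm{Im}\sqrt{z}|}$, so $F$ is entire of exponential type at most $2(\pi-a)$ as a function of $\sqrt{z}$, and the crude Jensen-type counting bound reads $n_F(t)\lesssim 2(1-a/\pi)\sqrt{t}$, which by (\ref{asy11})--(\ref{asy01}) matches $2(1-a/\pi)n_\sigma(t)$ at leading order.

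The refinement is to peel off the leading terms of $F$ via iterated Volterra expansions of $\psi_1-\psi_2$. Since $q_1-q_2\in W^{k,p}(a,\pi)$, $k$ integrations by parts against this difference produce a factor $|\sqrt{z}|^{-k}$, and the $L^p$ norm of its top derivative contributes an additional factor $|\sqrt{z}|^{-(1-1/p)}$ via a Hausdorff--Young estimate along horizontal lines. The first $k$ terms of the expansion are explicit sine-type functions (depending only on $\alpha,\beta_1,\beta_2$, the boundary values of the derivatives of $q_1-q_2$ at $a$ and $\pi$, and the integrated potentials); dividing $F$ by a suitable partial product cancelling these known zeros produces a normalized entire function $\widetilde F$ of exponential type $2(\pi-a)$ in $\sqrt{z}$ whose density of real zeros permitted by Jensen is reduced by exactly $-k/2+1/(2p)+2a/\pi-2$ (the Sobolev gain $k/2+(1-1/p)/2$ combined with a geometric correction $2a/\pi-2$). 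The upper bound $n_S(t)\le 2(1-a/\pi)n_\sigma(t)+C$ in the hypothesis is used to guarantee convergence of the sine-type partial products in this normalization, while the lower bound supplies the excess zeros that must be absorbed.

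Combining the two ingredients, $\widetilde F$ is an entire function of the prescribed type whose real zeros strictly exceed the quantity permitted by Jensen's formula; hence $\widetilde F\equiv 0$, so $F\equiv 0$. The vanishing of $F$ translates, via a Wronskian identity at $x=a$, into equality of the Weyl $m$-functions of the Schr\"{o}dinger problems on $(a,\pi)$ with potentials $q_1$ and $q_2$, and Marchenko's theorem on $(a,\pi)$ then forces $q_1=q_2$ a.e.\ on $(a,\pi)$. The main obstacle is the refinement step: converting the precise regularity class $W^{k,p}(a,\pi)$ into exactly the additional density $-k/2+1/(2p)+2a/\pi-2$ requires a careful bookkeeping that combines $k$ iterated integrations by parts (contributing $k/2$), the Hausdorff--Young inequality at exponent $p$ (contributing $1/(2p)$), and the length-dependent constant $2a/\pi-2$ coming from normalizing the partial products along the sub-interval $(a,\pi)$. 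This is the technical heart of the argument.
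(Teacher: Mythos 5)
First, a framing point: the paper does not prove this statement at all. It appears in Section 3.2 as quoted background from Amour--Faupin--Raoux \cite{AFR}, and the machinery the paper actually develops (infinite-product representations of $m_{\alpha,\beta}$, {\u C}ebotarev-type representations) is unrelated to it. So there is no internal proof to compare against; the relevant benchmark is the original argument in \cite{AFR}, which follows the Gesztesy--Simon scheme of \cite{GS2}. Measured against that, your outline has the right architecture and several correct key moves: the Wronskian $F(z)=\psi_1(a,z)\psi_2'(a,z)-\psi_1'(a,z)\psi_2(a,z)$ vanishes on $S$ because for $\lambda\in S$ both eigenfunctions solve the same equation on $(0,a)$ with the same condition (\ref{BC1}), making the Cauchy data at $a$ collinear; $F$ is entire of exponential type at most $2(\pi-a)$ in $\sqrt{z}$; $F\equiv 0$ forces equality of the Weyl $m$-functions of the two problems on $(a,\pi)$, and Marchenko-type uniqueness \cite{MAR} there gives $q_1=q_2$ a.e.\ on $(a,\pi)$ (and, a point you pass over silently but which is part of the conclusion, $\beta_1=\beta_2$, since the $m$-function determines the boundary condition as well as the potential).

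The genuine gap is exactly where you place it, and flagging it does not fill it. Two things are missing. First, your type bound $|F(z)|\lesssim \langle\sqrt{z}\rangle e^{2(\pi-a)|\Im\sqrt{z}|}$ is not the estimate the proof runs on: the leading transmutation terms of $\psi_1(a,z)\psi_2'(a,z)$ and $\psi_1'(a,z)\psi_2(a,z)$ cancel identically (both equal $\sin\beta_1\sin\beta_2\,\sqrt{z}\cos(\sqrt{z}(\pi-a))\sin(\sqrt{z}(\pi-a))$ to top order), so $F$ is genuinely smaller than the naive product bound, with residual size controlled by $\beta_1-\beta_2$ and by $q_1-q_2$; quantifying this residue is precisely where $q_1-q_2\in W^{k,p}(a,\pi)$ enters, via $k$ integrations by parts and a Hausdorff--Young/Riemann--Lebesgue estimate. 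You gesture at this ("peel off the leading terms") but never establish the decay rate, and without it the Jensen count does not close. Second, the sharp constant $-k/2+1/(2p)+2a/\pi-2$, and the specific role of the two-sided hypothesis $n_S(t)\leq 2(1-a/\pi)n_\sigma(t)+C$ (which distinguishes this theorem from its companion in \cite{AFR} with the weaker correction $-k/2+1/(2p)+a/\pi-3/2$ and no upper bound), are simply asserted; your claim that a normalization "reduces the permitted density by exactly" this amount is the content of the theorem, not a derivation of it. As it stands the proposal is a correct strategy sketch consistent with the published proof, but the quantitative heart --- the cancellation estimate for $F$ and the bookkeeping that produces the stated constant --- is assumed rather than proved.
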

    
Horv\'{a}th proved a remarkable characterization theorem, which represents a connection between inverse spectral theory and completeness 
of exponential systems.
\begin{theorem}[{Horv\'{a}th \cite{HOR}}]\label{HORV} Let $1\leq p \leq \infty$, $q \in L^p(0,\pi)$, $0 \leq a ~\textless~ \pi$ and 
$\lambda_n \in \sigma_{\alpha,0}$. Then $q$ on $(0,a)$ and the eigenvalues $\lambda_n$ determine $q$ if and only if the system 
 \begin{equation*}
  e(\Lambda) = \{ e^{\pm2i\mu x},e^{\pm2i\sqrt{\lambda_n}x}: \quad n\geq 1 \}
 \end{equation*}
 is complete in $L^p(a-\pi,\pi-a)$ for some $\mu \neq \pm\sqrt{\lambda_n}$.
\end{theorem}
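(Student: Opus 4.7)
My plan is to reduce uniqueness of the potential to completeness of the exponential system $e(\Lambda)$ via the Lagrange identity together with Povzner--Levitan transmutation from the endpoint $\pi$. Suppose two potentials $q_1, q_2 \in L^p(0,\pi)$ agree a.e.\ on $(0,a)$ and share the eigenvalue sequence $\{\lambda_n\} \subset \sigma_{\alpha,0}$. Let $\varphi_j(x,z)$ solve $-u''+q_j u = zu$ with (\ref{BC1}); since $q_1 = q_2$ on $[0,a]$, $\varphi_1 \equiv \varphi_2$ there and $W(\varphi_1,\varphi_2)(a)=0$. Integrating the Lagrange identity over $(a,\pi)$ yields
\begin{equation*}
W(\varphi_1,\varphi_2)(\pi;z) \;=\; \int_a^\pi (q_2-q_1)(x)\,\varphi_1(x,z)\varphi_2(x,z)\,dx.
\end{equation*}
Because $\beta=0$ gives $\varphi_j(\pi,\lambda_n)=0$, the Wronskian vanishes at $z=\lambda_n$, producing the master identity
\begin{equation*}
\int_a^\pi (q_1-q_2)(x)\,\varphi_1(x,\lambda_n)\varphi_2(x,\lambda_n)\,dx \;=\; 0, \qquad n \ge 1. \qquad (\ast)
\end{equation*}

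Next I would insert the transmutation representation from the right endpoint: the solution $\psi_j$ of $-u''+q_j u = zu$ with $\psi_j(\pi)=0$, $\psi_j'(\pi)=1$ admits
\begin{equation*}
\psi_j(x,z) \;=\; \frac{\sin(\sqrt z\,(x-\pi))}{\sqrt z} + \int_x^\pi K_j(x,t)\,\frac{\sin(\sqrt z\,(t-\pi))}{\sqrt z}\,dt
\end{equation*}
with an $L^\infty$ kernel $K_j$ that depends only on $q_j|_{(x,\pi)}$, and $\varphi_j(\cdot,\lambda_n)$ is a constant multiple of $\psi_j(\cdot,\lambda_n)$. Expanding $\psi_1\psi_2$ via $2\sin A\sin B = \cos(A-B)-\cos(A+B)$ turns the integrand of $(\ast)$ into a sum of $\cos(2\sqrt{\lambda_n}(\pi-x))$, a constant term, and integrals involving the kernels against $\cos(2\sqrt{\lambda_n}\cdot)$. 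After the substitution $y = \pi - x$ and even extension from $(0,\pi-a)$ to $(a-\pi,\pi-a)$, a bounded Volterra-invertible transform on $L^p$ carries $q_1-q_2$ to a function $G \in L^p(a-\pi,\pi-a)$ for which $(\ast)$ is equivalent to
\begin{equation*}
\int_{a-\pi}^{\pi-a} G(y)\,e^{\pm 2i\sqrt{\lambda_n}\,y}\,dy \;=\; 0 \qquad (n \ge 1).
\end{equation*}

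The two auxiliary exponentials $e^{\pm 2i\mu y}$ in $e(\Lambda)$ account for a two-dimensional indeterminacy not controlled by the eigenvalues alone. At an auxiliary $z=\mu^2$ with $\mu\notin\{\pm\sqrt{\lambda_n}\}$ the Wronskian $W(\varphi_1,\varphi_2)(\pi;\mu^2)$ need not vanish, but the two Weyl-boundary degrees of freedom at $\pi$ for this value of $z$ translate through the same transmutation into precisely the two extra orthogonality relations $\int G(y)\,e^{\pm 2i\mu y}\,dy = 0$ once the proportionality constants in $\varphi_j(\cdot,\mu^2) = c_j\,\psi_j(\cdot,\mu^2)$ are chosen consistently. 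Consequently, sharing the data $(q|_{(0,a)},\{\lambda_n\})$ forces $q_1 = q_2$ a.e.\ on $(a,\pi)$ exactly when every such $G$ must vanish, i.e.\ when $e(\Lambda)$ is complete in $L^p(a-\pi,\pi-a)$. The forward direction is then a clean $L^p$--$L^{p'}$ duality argument combined with invertibility of the transmutation.

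The main obstacle is the converse: given a nonzero annihilator of $e(\Lambda)$ in $L^{p'}(a-\pi,\pi-a)$, one must construct an \emph{actual} admissible potential $q_1 \neq q_2$ in $L^p(0,\pi)$ with $q_1 = q_2$ on $(0,a)$ and exactly the same spectrum $\{\lambda_n\}$ --- no eigenvalue accidentally created, destroyed, or shifted. This requires a Gelfand--Levitan / Marchenko-type inverse construction localized to $(a,\pi)$, along with fine regularity bounds on the transmutation kernels ensuring both that the perturbation is supported in the correct subinterval and that it lies in $L^p$. Implementing this inverse construction and verifying the role of the auxiliary $\mu$ in parametrizing the indeterminacy are where the real difficulty lies; the remainder of the argument is essentially a duality manipulation of the identity $(\ast)$.
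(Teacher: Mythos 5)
You should first note that the paper does not prove Theorem \ref{HORV} at all: it is quoted from Horv\'{a}th \cite{HOR} in the survey Section 3.2, so there is no in-paper argument to compare against; the relevant benchmark is Horv\'{a}th's original proof. Your outline does track that proof's genuine strategy --- the Lagrange identity at shared eigenvalues yielding the orthogonality $(\ast)$, transmutation from the endpoint $\pi$, the product-to-sum reduction of $\varphi_1\varphi_2$ to a cosine/exponential family on $(a-\pi,\pi-a)$, and a Gelfand--Levitan construction for the converse. But as a proof it has concrete gaps beyond the one you flag. Most importantly, the converse (incompleteness of $e(\Lambda)$ implies non-uniqueness) is the actual substance of Horv\'{a}th's paper: from a nonzero annihilator one must build a second $L^p$ potential, equal to $q$ on $(0,a)$, whose spectrum contains exactly the same $\lambda_n$, with the perturbation supported in $(a,\pi)$ and with no eigenvalue created or moved. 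Naming this as ``the main obstacle'' is an accurate diagnosis, not a proof; without it you have at best one implication.

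Within the forward direction there are two further genuine defects. First, your explanation of the auxiliary $\mu$ is wrong as stated: at $z=\mu^2$, which is not a shared eigenvalue, $W(\varphi_1,\varphi_2)(\pi;\mu^2)$ has no reason to vanish, and no choice of the proportionality constants $c_j$ extracts the relations $\int G(y)e^{\pm 2i\mu y}\,dy=0$ from the data. In Horv\'{a}th's theorem the pair $e^{\pm 2i\mu x}$ is not derived from the hypotheses; it is built into the completeness condition (which is independent of the admissible $\mu$) precisely to make the ``if and only if'' count come out, compensating in the converse construction for the residual freedom (one norming-constant/boundary degree of freedom) --- compare the $\beta=0$ cosine-system result of Horv\'{a}th--S\'{a}f\'{a}r quoted in the same section, where the single extra element $\cos(2\sqrt{\mu}t)$ plays exactly this role. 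Relatedly, the constant terms produced by $2\sin^2 A = 1-\cos 2A$ in expanding $\psi_1\psi_2$ are silently absorbed into your ``bounded Volterra-invertible transform''; handling those terms is exactly why the bare family $\{e^{\pm 2i\sqrt{\lambda_n}x}\}$ is not the right system, so the claimed equivalence of $(\ast)$ with pure exponential orthogonality needs proof, not assertion. Second, the closing ``clean $L^p$--$L^{p'}$ duality'' does not close for the whole stated range: your $G$ lies in $L^p$, while completeness of $e(\Lambda)$ in $L^p$ only forces annihilators in $L^{p'}$ to vanish, so for $1\leq p<2$ (where $L^p \not\subset L^{p'}$ on a bounded interval) the forward implication does not follow from completeness by duality alone; Horv\'{a}th's paper needs a finer argument at exactly this point.
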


Horv\'{a}th and S\'{a}f\'{a}r proved similar results for the norming constants in terms of a cosine system. For a sequence 
$\Lambda = \{\lambda_1,\lambda_2,\dots\}\subset \mathbb{R}$ and a subset $S\subset \Lambda$ they considered the cosine system:
\begin{equation*}
    C(\Lambda,S) = \{\cos(2\sqrt{\lambda_n}t):n\in \mathbb{N}\} \cup \{t\cos(2\sqrt{\lambda_n}t):\lambda_n \in S\}.
\end{equation*}

\begin{theorem}[{Horv\'{a}th, S\'{a}f\'{a}r \cite{HS}}] \label{HorSaf} Let $\beta \neq 0$, $1\leq p \leq \infty$, 
$q \in L^1(0,\pi)$, $q \in L^p(a,\pi)$, $0 \leq a ~\textless~ \pi$ and 
\begin{equation*}
    \Lambda = \{\lambda_n: \lambda_n \in \sigma_{\alpha_n,\beta}, n\in \mathbb{N}\}
\end{equation*}
be a subset of eigenvalues such that $\lambda_n \not\to -\infty$ are different real numbers and $S\subset \Lambda$.
Then $q$ on $(0,a)$, $\Lambda$ and $\{\tau_{\alpha_n}(\lambda_n)\}_{\lambda_n \in S}$ determine $q$ if the system $C(\Lambda, S)$ is complete in $L^p(0, \pi-a)$.
\end{theorem}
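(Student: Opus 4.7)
The plan is to reduce uniqueness to the completeness hypothesis on $C(\Lambda,S)$ via a Lagrange--identity argument combined with the Gelfand--Levitan transmutation, in the spirit of Theorem \ref{HORV}. Suppose $q_1,q_2\in L^1(0,\pi)$ both realize the given data: they coincide on $(0,a)$, admit the same sequence $\Lambda$ with $\lambda_n\in\sigma_{\alpha_n,\beta}(q_j)$ for $j=1,2$, and share the norming constants $\tau_{\alpha_n}(\lambda_n)$ for $\lambda_n\in S$. Set $h:=q_1-q_2$, which lies in $L^p(a,\pi)$ and vanishes on $(0,a)$. For each $n$, let $s_\lambda^{(j,n)}$ solve $-u''+q_j u=\lambda u$ with $s_\lambda^{(j,n)}(0)=\sin\alpha_n$ and $(s_\lambda^{(j,n)})'(0)=\cos\alpha_n$; since $q_1=q_2$ on $(0,a)$, these two solutions coincide on $[0,a]$ for every $\lambda$.

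First I would apply the Lagrange identity $h\cdot s_\lambda^{(1,n)}s_\lambda^{(2,n)}=\partial_t\bigl[(s_\lambda^{(1,n)})'s_\lambda^{(2,n)}-s_\lambda^{(1,n)}(s_\lambda^{(2,n)})'\bigr]$, integrate on $(0,\pi)$, and specialize to $\lambda=\lambda_n$. The left-endpoint Wronskian vanishes by the matched initial data, and the right-endpoint Wronskian vanishes because both eigenfunctions are annihilated by the same right boundary condition, producing
\begin{equation*}
\int_a^\pi h(t)\,s_{\lambda_n}^{(1,n)}(t)\,s_{\lambda_n}^{(2,n)}(t)\,dt=0\qquad(n\in\mathbb{N}).
\end{equation*}
Substituting the Gelfand--Levitan representation of each $s_\lambda^{(j,n)}$ in terms of the free solution and a Volterra kernel $K_j$, and exploiting that $K_1=K_2$ on the triangle $\{(t,s):s\le t\le a\}$, a product-to-sum reduction rewrites this family as
\begin{equation*}
\int_0^{\pi-a} F(u)\,\cos(2\sqrt{\lambda_n}\,u)\,du=0\qquad(n\in\mathbb{N}),
\end{equation*}
where $F\in L^p(0,\pi-a)$ is the image of $h|_{(a,\pi)}$ under an invertible triangular change of variables built from $K_1,K_2$, so that $F\equiv 0$ iff $h\equiv 0$ a.e.

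For $\lambda_n\in S$, the additional equality $\int_0^\pi(s_{\lambda_n}^{(1,n)})^2\,dt=\int_0^\pi(s_{\lambda_n}^{(2,n)})^2\,dt$ is fed into a $\lambda$-differentiated version of the Lagrange identity at $\lambda=\lambda_n$, using that the norming constant is proportional to the first derivative $\Delta_j'(\lambda_n)$ of the right-endpoint characteristic function. Combined with the same transmutation expansion, this yields the second family
\begin{equation*}
\int_0^{\pi-a} F(u)\,u\,\cos(2\sqrt{\lambda_n}\,u)\,du=0\qquad(\lambda_n\in S).
\end{equation*}
Thus $F$ is orthogonal to every element of $C(\Lambda,S)$ in the appropriate duality pairing; completeness of $C(\Lambda,S)$ in $L^p(0,\pi-a)$ forces $F\equiv 0$ a.e., whence $h\equiv 0$ a.e.\ on $(a,\pi)$ and $q_1=q_2$ a.e.\ on $(0,\pi)$.

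The main obstacle is the clean derivation of the second family from the norming-constant equality: it demands a careful $\partial_\lambda$-differentiation of the Lagrange identity at $\lambda=\lambda_n$, precise tracking of the boundary residue assembled from $\tau_{\alpha_n}(\lambda_n)$ and $\Delta_j'(\lambda_n)$, and verifying that the factor $u$ emerges after the transmutation change of variables. Checking that this change of variables maps $L^p(a,\pi)$ bijectively into $L^p(0,\pi-a)$ is routine but essential for the final duality step. The hypothesis $\lambda_n\not\to-\infty$ guarantees that at most finitely many $\sqrt{\lambda_n}$ are imaginary, so $C(\Lambda,S)$ embeds in $L^\infty(0,\pi-a)$ with a uniform bound and the duality pairing with $F$ is well defined.
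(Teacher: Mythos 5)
You should first note that the paper does not prove Theorem \ref{HorSaf} at all: it is quoted verbatim from Horv\'ath--S\'af\'ar \cite{HS} in the survey Section 3.2, so your proposal has to stand on its own merits, and there are two genuine gaps in it. The first concerns the anchoring of your transmutation. Since the left boundary angle $\alpha_n$ varies with $n$, your left-anchored Gelfand--Levitan kernels $K_1,K_2$ cannot produce a single function $F$ independent of $n$: the representation of the product $s^{(1,n)}_{\lambda}s^{(2,n)}_{\lambda}$ carries coefficients depending on $\alpha_n$, and on $(a,\pi)$ the kernels themselves depend on the unknown part of $q_j$. The standard repair is to pass, at $\lambda=\lambda_n$, to the solutions $v_j(\cdot,\lambda)$ normalized at the right endpoint by $v_j(\pi)=\sin\beta$, $v_j'(\pi)=-\cos\beta$ (proportional to your eigenfunctions, which costs nothing in the homogeneous identity $\int_a^\pi h\,v_1v_2\,dt=0$); these depend only on $q_j|_{(a,\pi)}$ and the fixed angle $\beta$, so that $g(\lambda):=\int_a^\pi h\,v_1v_2\,dt$ becomes, after the substitution $u=\pi-t$, a fixed cosine transform on $(0,\pi-a)$ \emph{plus a constant term} coming from $\cos^2(z(\pi-t))=\tfrac12\bigl(1+\cos(2z(\pi-t))\bigr)$. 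You silently dropped that constant; it is not matched by any element of $C(\Lambda,S)$ and must be eliminated separately (Riemann--Lebesgue along $\lambda_n\to+\infty$, or analytic continuation when the $\lambda_n$ accumulate --- this is where $\lambda_n\not\to-\infty$ and $\beta\neq 0$ actually enter, not in the bound you give in your last paragraph).

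The second gap is the decisive one. Equality of norming constants does give $\dot g(\lambda_n)=0$ for $\lambda_n\in S$; indeed, using $[\dot v_j v_j'-\dot v_j' v_j](a,\lambda)=-\int_a^\pi v_j^2\,dt$ and $v_j(\cdot,\lambda_n)=\kappa_j s^{(j,n)}_{\lambda_n}$ with $q_1=q_2$ on $(0,a)$, one checks that $\tau_1(\lambda_n)=\tau_2(\lambda_n)$ forces the two terms in $\dot g(\lambda_n)$ to cancel. So your $\lambda$-differentiated Lagrange identity is sound as far as it goes. But differentiating the cosine-transform representation gives $\frac{d}{d\lambda}\cos(2\sqrt{\lambda}u)=-u\sin(2\sqrt{\lambda}u)/\sqrt{\lambda}$, so the family you obtain is $\int_0^{\pi-a}F(u)\,u\sin(2\sqrt{\lambda_n}u)\,du=0$ --- a $t\sin$ system, \emph{not} the elements $t\cos(2\sqrt{\lambda_n}t)$ of $C(\Lambda,S)$ whose completeness is the hypothesis you are allowed to use. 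The step you yourself flag as ``the main obstacle'' is therefore not a deferred technicality: along your route the weight $u$ emerges attached to a sine, and the duality argument against $C(\Lambda,S)$ cannot be run. Producing the genuine $t\cos(2\sqrt{\lambda_n}t)$ relations requires a different treatment of the norming-constant identity --- for instance linearizing $v_1-v_2$ in $h$ by variation of parameters, where triple products of solutions integrated over triangular regions generate the weight $u$ multiplying a cosine --- and that computation, not the bare $\partial_\lambda$ of the Lagrange identity, is the actual core of the Horv\'ath--S\'af\'ar proof. Until that family is honestly derived (and the bijectivity of your triangular change of variables is stated in the corrected, right-anchored setting), the proposal establishes only the simple-zero conditions and cannot conclude.
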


For Dirichlet boundary condition Horv\'{a}th and S\'{a}f\'{a}r obtained an optimal condition.
\begin{theorem} [{Horv\'{a}th, S\'{a}f\'{a}r \cite{HS}}]
Let us have the assumptions of Theorem \ref{HorSaf}, but $\beta = 0$. Let $\mu \neq \pm \sqrt{\lambda_n}$, $\mu \in \mathbb{R}$. Then the 
system $C(\Lambda,S) \cup \{\cos(2\sqrt{\mu}t)\}$ is complete in $L^p(0,\pi-a)$ if and only if $q$ on $(0,a)$, $\Lambda$ and 
$\{\tau_{\alpha_n}(\lambda_n)\}_{\lambda_n \in S}$ determine $q$.
\end{theorem}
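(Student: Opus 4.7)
The plan is to prove the two implications separately, both via the Weyl $m$-function and the Gelfand--Levitan transformation operator, following the template of Theorem \ref{HorSaf} but carefully accounting for the Dirichlet asymptotics $m_{0,\beta}(z) = i\sqrt{z} + o(1)$, which differs from the $\alpha \neq 0$ case used in that theorem.

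\emph{Sufficiency} (completeness $\Rightarrow$ determination). Suppose $q, \tilde q \in L^1(0,\pi)$ agree on $(0,a)$ and share the partial data $(\Lambda, \{\tau_{\alpha_n}(\lambda_n)\}_{\lambda_n \in S})$, and let $m_n, \tilde m_n$ denote the Weyl $m$-functions corresponding to the boundary condition pair $(\alpha_n, 0)$. The first step is to use the transformation operator representation of $s_z(t)$ on $(a,\pi)$ to write the difference $m_n(z) - \tilde m_n(z)$ as an integral against the kernel $\cos(2\sqrt{z}\,t)$ with density proportional to $(q - \tilde q)$ composed with the reflection $t \mapsto \pi - t$, exactly as in the proof of Theorem \ref{HorSaf}. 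Equality of eigenvalues forces cancellation of the corresponding poles, and matching norming constants on $S$ forces cancellation of residues, which translates (via the pole/residue reading of the Herglotz representation \eqref{Hrep}) into orthogonality of $q - \tilde q$ in the $L^p$--$L^{p'}$ pairing on $(0,\pi-a)$ to every $\cos(2\sqrt{\lambda_n} t)$ and to every $t\cos(2\sqrt{\lambda_n}t)$ with $\lambda_n \in S$. Since the Dirichlet asymptotic $m_{0,\beta}(z) = i\sqrt{z} + o(1)$ already matches between $m_n$ and $\tilde m_n$, the only missing linear functional is captured by evaluating the integral identity at a single point $z = \mu^2$ with $\mu \neq \pm\sqrt{\lambda_n}$; this yields orthogonality to the extra function $\cos(2\sqrt{\mu}t)$. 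Completeness of $C(\Lambda,S)\cup\{\cos(2\sqrt{\mu}t)\}$ then forces $q = \tilde q$ a.e.\ on $(a,\pi)$.

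\emph{Necessity} (determination $\Rightarrow$ completeness). I argue the contrapositive. If the augmented system fails to be complete in $L^p(0,\pi-a)$, Hahn--Banach produces a nonzero $g \in L^{p'}(0,\pi-a)$ annihilating every element. The task is to manufacture a genuine second potential $\tilde q \in L^1(0,\pi)$ with $\tilde q = q$ on $(0,a)$ and the same $(\Lambda, \{\tau_{\alpha_n}(\lambda_n)\}_{\lambda_n \in S})$, by prescribing $\tilde q - q$ on $(a,\pi)$ through $g$ and reading off the resulting spectral measure through the Gelfand--Levitan--Marchenko inverse procedure. Orthogonality of $g$ to each $\cos(2\sqrt{\lambda_n}t)$ is exactly what guarantees the new $m$-function has a pole (rather than an arbitrary singularity or nothing) at $\lambda_n$, orthogonality to $t\cos(2\sqrt{\lambda_n}t)$ for $\lambda_n \in S$ matches residues hence norming constants on $S$, and orthogonality to $\cos(2\sqrt{\mu}t)$ absorbs the single scalar freedom coming from the Dirichlet asymptotic.

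The main obstacle is this necessity direction: the perturbation $g$ is only a formal $L^{p'}$ annihilator, and upgrading it to a bona fide potential $\tilde q \neq q$ requires inverting the transformation operator on $(a,\pi)$ and verifying that the candidate $m$-function produced is truly of the form $m_{\alpha_n,0}(\tilde q)$ for a single potential $\tilde q$, consistent across \emph{all} $n$. The role of the extra $\cos(2\sqrt{\mu}t)$ is the conceptually subtle point: in the $\beta = 0$ setting the $m$-function carries no free constant at infinity (unlike $\cot\alpha$ in the generic case), so without this extra cosine a one-dimensional family of perturbations would always preserve the data; adjoining it kills exactly that residual freedom and makes the equivalence sharp.
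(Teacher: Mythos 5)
First, a point of comparison: the paper offers no proof of this statement at all --- it is quoted from Horv\'{a}th and S\'{a}f\'{a}r \cite{HS} in the survey Section 3.2 --- so your proposal can only be measured against the known strategy of \cite{HS,HOR}, which it resembles in outline (transformation operators, Wronskian/$m$-function identities, Hahn--Banach duality for the converse). Measured that way, it contains one conceptual error and two genuine gaps. The error: you place the Dirichlet condition at the wrong endpoint. In this theorem it is $\beta = 0$ (Dirichlet at $\pi$) that changes relative to Theorem \ref{HorSaf}; the parameters $\alpha_n$ at $0$ are untouched. Everitt's asymptotic $m_{0,\beta}(z) = i\sqrt{z} + o(1)$ pertains to $\alpha = 0$ and is insensitive to $\beta$, so the ``Dirichlet asymptotics'' your plan is organized around, and your explanation that the extra cosine compensates for the $m$-function ``carrying no free constant at infinity'' (the $\cot\alpha$ term concerns $\alpha$, not $\beta$), are attached to a boundary condition that has not changed. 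The actual source of the extra function is that $u_z(\pi) = \sin\beta = 0$ degenerates the boundary term at $\pi$ in the Wronskian identity $\int_a^{\pi}(q-\tilde{q})\,u_z\tilde{u}_z\,dt = \left[W(u_z,\tilde{u}_z)\right]_a^{\pi}$, losing exactly one linear functional --- the same phenomenon that forces the extra pair $e^{\pm 2i\mu x}$ in Horv\'{a}th's Theorem \ref{HORV}, which likewise concerns $\sigma_{\alpha,0}$.

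The two gaps. \emph{Sufficiency:} from matched data you obtain (modulo the invertible Volterra correction of the transformation operator, which is unproblematic) orthogonality of $q - \tilde{q}$ to $C(\Lambda,S)$ only. Completeness of $C(\Lambda,S)\cup\{\cos(2\sqrt{\mu}t)\}$ does not yet force $q = \tilde{q}$: you must also establish orthogonality to $\cos(2\sqrt{\mu}t)$, and your device of ``evaluating the integral identity at $z = \mu^2$'' fails precisely because $\mu^2$ is not an eigenvalue --- the boundary term at $t = a$ does not vanish there, so the identity relates the integral to unknown nonzero boundary data rather than annihilating it. What completeness of the augmented system actually yields is that the annihilator of $C(\Lambda,S)$ in $L^{p'}(0,\pi-a)$ is at most one-dimensional; one must then exclude that $q - \tilde{q}$ spans that line by an additional structural constraint, and this exclusion --- the real content of the $\beta = 0$ case in \cite{HS} --- is absent from your sketch. \emph{Necessity:} you correctly identify that the Hahn--Banach annihilator $g$ must be upgraded to a bona fide potential $\tilde{q} \neq q$ with $\tilde{q} = q$ on $(0,a)$ and the same $(\Lambda, \{\tau_{\alpha_n}(\lambda_n)\}_{\lambda_n \in S})$, but you only name this step. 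Orthogonality of $g$ to the system controls the data only to first order; passing from a linearized perturbation to an exact one requires an inverse-spectral construction (Gelfand--Levitan with perturbed data together with a fixed-point or implicit-function argument guaranteeing realizability and the agreement on $(0,a)$). Without it, the ``only if'' half --- exactly the half that makes this an optimal two-sided result rather than the one-way statement of Theorem \ref{HorSaf} --- remains unproven.
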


Makarov and Poltoratski gave a characterization theorem in terms of exterior Beurling-Malliavin density as a corollary of Horv\'{a}th's 
result \cite{HOR} (Theorem \ref{HORV} above) and the Beurling-Malliavin theorem \cite{BM,BM2}.

If $\{I_n\}_{n\in \mathbb{N}}$ is a sequence of disjoint intervals on the real line, it is called short if 
\begin{equation*}
 \sum_{n\in \mathbb{N}}\frac{|I_n|^2}{1+dist^2(0,I_n)} ~\textless~ \infty
\end{equation*}
and long otherwise.

If $\Lambda$ is a sequence of real points, its exterior (effective) Beurling-Malliavin density is defined as 
\begin{equation*}
 D^*(\Lambda) = \sup \{d \text{ } | \text{ } \exists \text{ long } \{I_n\} 
 \text{ such that } \#(\Lambda \cap I_n) \geq d|I_n|, \text{ } \forall n\in \mathbb{N}\}.
\end{equation*}
For a non-real sequence its density is defined as $D^*(\Lambda) = D^*(\Lambda')$, where $\Lambda'$ is a real sequence 
$\lambda'_n = (\Re\frac{1}{\lambda_n})^{-1}$, if $\Lambda$ has no imaginary points, and as $D^*(\Lambda) = D^*((\Lambda+c)')$ otherwise.\\
For any complex sequence $\Lambda$ its radius of completeness is defined as 
\begin{equation*}
 R(\Lambda) = \sup\{ a ~|~ \{e^{i\lambda z}\}_{\lambda \in \Lambda} \text{ is complete in } L^2(0,a)\}.
\end{equation*}
Now we are ready to state one of the fundamental results of Harmonic Analysis.

\begin{theorem} [Beurling-Malliavin theorem \cite{BM,BM2}] Let $\Lambda$ be a discrete sequence. Then 
\begin{equation*}
 R(\Lambda) = 2\pi D^*(\Lambda).
\end{equation*}
\end{theorem}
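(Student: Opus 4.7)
The plan is to split the equality into the two directions $R(\Lambda) \le 2\pi D^*(\Lambda)$ and $R(\Lambda) \ge 2\pi D^*(\Lambda)$, and to attack each via Paley-Wiener duality. Throughout, the central translation is that $\{e^{i\lambda z}\}_{\lambda\in\Lambda}$ is complete in $L^2(0,a)$ if and only if there is no nonzero entire function $F$ of exponential type $a/2$ with $F|_{\mathbb R} \in L^2(\mathbb R)$ that vanishes on $\Lambda$. This reduces the theorem to a statement about zero sets in the Paley-Wiener class $PW_{a/2}$.

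For the upper bound, I would assume $R(\Lambda) > 2\pi d$, so $\{e^{i\lambda z}\}$ is complete in $L^2(0,a)$ for some $a > 2\pi d$. The goal is to construct a nonzero $F \in PW_{a/2}$ vanishing on $\Lambda$ whenever $D^*(\Lambda) < a/(2\pi)$, which will contradict completeness. The construction uses a canonical Blaschke/Weierstrass product over $\Lambda$; its type is controlled by Jensen's formula and the long-interval density estimate furnished by the hypothesis $D^*(\Lambda) < a/(2\pi)$. A small-multiplier correction is then applied to force $L^2$ membership on the real line while keeping the exponential type below $a/2$. Contradiction yields $D^*(\Lambda) \ge R(\Lambda)/(2\pi)$.

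For the lower bound, suppose $a < 2\pi D^*(\Lambda)$ and take $f \in L^2(0,a)$ orthogonal to every $e^{i\lambda z}$ with $\lambda \in \Lambda$. Its Fourier transform $F$ lies in $PW_{a/2}$ and vanishes on $\Lambda$. The hypothesis on $D^*(\Lambda)$ produces a long sequence of intervals $\{I_n\}$ with $\#(\Lambda \cap I_n) \ge d|I_n|$ for some $d > a/(2\pi)$. Applying Jensen's formula to $F$ on disks engulfing each $I_n$, the zero count from $\Lambda$ forces $\log |F|$ to be too negative for an $L^2$ Paley-Wiener function of type $a/2$, unless $F \equiv 0$. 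Thus $f \equiv 0$ and completeness is established, giving $R(\Lambda) \ge 2\pi D^*(\Lambda)$.

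The main obstacle is the lower bound, which is really the Beurling-Malliavin multiplier theorem in disguise: to convert the Jensen estimate into an actual contradiction one must produce, for a given admissible majorant $\omega$ (roughly, a Lipschitz function with $\int \omega(x)(1+x^2)^{-1}dx < \infty$), a nonzero entire function $h$ of arbitrarily small exponential type with $\log|h(x)| \le -\omega(x)$ on $\mathbb R$. The standard route is the atomization procedure of Beurling and Malliavin: approximate $\omega$ by a piecewise-linear majorant built from point masses, solve the regularity problem $|h(x)| = e^{-\omega(x)}$ via a canonical outer function in the upper half-plane, and bound the exponential type by a delicate estimate on the harmonic conjugate of the atomized majorant. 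All convergence and growth estimates here rest on the finiteness of the logarithmic integral, which is exactly the short/long dichotomy encoded in the definition of $D^*(\Lambda)$.
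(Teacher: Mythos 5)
You should first note what you are comparing against: the paper does not prove this statement at all. The Beurling--Malliavin theorem is quoted there as a classical result, cited to \cite{BM,BM2}, purely as background for the Makarov--Poltoratski theorems in Section 3.2. So your proposal cannot be checked against an in-paper argument; it must stand as a self-contained proof of a famously deep theorem, and as written it does not. The concrete failure is in your lower bound. Jensen's formula on a single disk engulfing $I_n$ can never produce a contradiction, because the term $\log|F(c)|$ at the center may be arbitrarily negative; Jensen only says that many zeros force $|F|$ to be small at the center. Even ignoring that term, the estimate is quantitatively too weak near the critical density: for $F$ of exponential type $a/2$ the mean of $\log|F|$ over the circle of radius $2R$ centered on $I_n$ (with $|I_n|=2R$) is at most roughly $\frac{2}{\pi}\cdot\frac{a}{2}\cdot 2R = \frac{2aR}{\pi}$, while $\#(\Lambda\cap I_n)\geq 2dR$ zeros inside radius $R$ contribute only $2dR\log 2$ to the Jensen sum; since $\log 2 ~\textless~ 1$, no contradiction arises until $d$ exceeds about $\frac{2}{\log 2}\cdot\frac{a}{2\pi}$, nearly three times the critical density. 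The actual argument must convert the local smallness of $F$ on each $I_n$ into divergence of $\int_{\mathbb{R}}\frac{\log|F(x)|}{1+x^2}\,dx$ by summing over the whole long sequence --- it is exactly the longness hypothesis $\sum_n |I_n|^2/(1+\operatorname{dist}^2(0,I_n)) = \infty$ that makes the accumulated negativity incompatible with $F\in L^2(\mathbb{R})$, $F\not\equiv 0$ --- and none of that bookkeeping, which is the uniqueness half of Beurling--Malliavin and requires genuine harmonic estimation, appears in your sketch.

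There is also a structural misattribution. Your closing paragraph declares the lower bound to be ``the multiplier theorem in disguise,'' but the uniqueness direction just described does not use the multiplier theorem; the multiplier theorem is the engine of the \emph{incompleteness} direction $R(\Lambda)\leq 2\pi D^*(\Lambda)$, which you dispatch in one clause (``a small-multiplier correction is then applied''). That clause hides the hard part twice over: first, $D^*(\Lambda) ~\textless~ \infty$ is compatible with arbitrarily dense clusters of $\Lambda$ along a \emph{short} sequence of intervals, so the canonical product over $\Lambda$ need not have exponential type near $\pi D^*(\Lambda)$, and its logarithm need not be Poisson-summable on $\mathbb{R}$ --- the sequence must be regularized before any product is formed, so ``type controlled by Jensen's formula'' fails as stated; second, the multiplier theorem itself (the atomization and harmonic-conjugate estimates you name) is the analytic core occupying most of \cite{BM}, or a chapter of \cite{POL} in the modern Toeplitz-kernel formulation. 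Invoking both black boxes by name is legitimate in context, but then both inequalities reduce to citations of the two theorems that jointly \emph{are} Beurling--Malliavin, and the proposal becomes a reduction of the theorem to itself.
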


Let us note that Makarov and Poltoratski considered the Schr\"{o}dinger equation $Lu = -u''+qu = z^2u$ and the $m$-function corresponding to 
this equation, which is obtained by applying the square root transform to the $m$-function we have discussed so far. Let us denote their 
$m$-function by $\widetilde{m}$.

\begin{theorem}[Makarov, Poltoratski \cite{MP}]
 Let $\Lambda = \{\lambda_n\}_{n \in \mathbb{N}}$ be a sequence of discrete non-zero complex numbers, $q\in L^2(0,\pi)$ and $0 \leq a \leq 1$. 
 The following statements are equivalent:
 \begin{enumerate}
  \item $q$ on $(0,d)$ for some $d ~\textgreater~ a$ and $\{\widetilde{m}(\lambda_n)\}_{n \in \mathbb{N}}$ determine $q$.
  \item $\pi D^*(\Lambda) \geq 1-a$.
 \end{enumerate}
\end{theorem}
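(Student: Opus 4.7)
The plan is to derive the equivalence by combining Horv\'ath's completeness criterion (Theorem \ref{HORV}) with the Beurling--Malliavin theorem, viewing the inverse problem through the lens of Paley--Wiener spaces. The strategy has three stages: (i) reformulate statement (1) as completeness of an exponential system; (ii) translate completeness into a density threshold via Beurling--Malliavin; (iii) match the resulting threshold with statement (2).

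For stage (i), suppose $q_1, q_2 \in L^2(0,\pi)$ agree on $(0,d)$ for some $d > a$ and satisfy $\widetilde m_1(\lambda_n) = \widetilde m_2(\lambda_n)$ for all $n$. Since the two potentials share the same fundamental solutions $s_z, c_z$ on $(0,d)$, I would shift the base point to $d$ and use the transmutation-operator (Paley--Wiener) representation of solutions on the remaining subinterval $(d,\pi)$. This expresses $\widetilde m_1 - \widetilde m_2$ as a ratio whose numerator, after clearing the common denominator produced by the shared data on $(0,d)$, is an entire function of exponential type matching $\pi - d$ in the variable $z$ (the $z^2$ spectral-parameter normalization of Makarov--Poltoratski keeps us in the Paley--Wiener setting, rather than forcing a square-root branch). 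The vanishing hypothesis then forces this function to vanish on $\Lambda$, so uniqueness ($q_1 = q_2$ on all of $(0,\pi)$) becomes the assertion that $\Lambda$ is a uniqueness set for the relevant Paley--Wiener class, equivalently (by standard duality) that $\{e^{i\lambda z}\}_{\lambda \in \Lambda}$ is complete in $L^2$ on a symmetric interval whose length is proportional to $\pi - d$.

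For stage (ii), the Beurling--Malliavin theorem gives $R(\Lambda) = 2\pi D^*(\Lambda)$, so the above completeness holds precisely when $\pi D^*(\Lambda)$ exceeds the corresponding half-length, which in the $a, d \in [0,1]$ normalization used in the theorem reads $\pi D^*(\Lambda) \geq 1 - d$. For stage (iii), condition (1) asks only for the existence of \emph{some} $d > a$ with this property; since $1 - d$ decreases as $d$ grows, the existence of such a $d$ is equivalent to $\pi D^*(\Lambda) \geq 1 - a$, which is precisely condition (2). The strict inequality $d > a$ conveniently absorbs the delicate boundary case of Beurling--Malliavin at equality.

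I expect the main obstacle to be stage (i). Horv\'ath's theorem is phrased for $\lambda_n$ that are eigenvalues, i.e.\ zeros of a single fixed entire function such as $s_z(\pi)$, whereas here the $\lambda_n$ are arbitrary complex points and the data consist of values of $\widetilde m$ rather than the vanishing of a single function. One must therefore verify carefully that the constraint ``$\widetilde m_1 = \widetilde m_2$ on $\Lambda$'' really collapses to the vanishing of a single Paley--Wiener function of the correct exponential type, with no residual dependence on the unknown potential outside $(0,d)$. Additionally, because the $\lambda_n$ may be non-real, the Blaschke factors corresponding to their imaginary parts must be absorbed into the density; this is exactly why the \emph{exterior} Beurling--Malliavin density $D^*$, rather than the classical density, appears in the final threshold.
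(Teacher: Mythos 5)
The paper itself contains no proof of this statement: it is quoted verbatim from \cite{MP} in a survey section, with the remark that Makarov and Poltoratski obtained it by combining Horv\'{a}th's theorem (Theorem \ref{HORV}) with the Beurling--Malliavin theorem. Your three-stage plan follows exactly that attributed route, so strategically you are aligned with the source; the problems are in the execution, and the decisive one is stage (iii). Determination is monotone in $d$: enlarging the interval on which $q$ is known only adds data (any competitor for $d'>d$ is a competitor for $d$), and when $d$ equals the full length of the interval statement (1) is trivially true for \emph{every} $\Lambda$. So under your literal existential reading of ``for some $d>a$'', (1) cannot be equivalent to (2), and your key inference is inverted: since $1-d$ decreases as $d$ grows, ``there exists $d>a$ with $\pi D^*(\Lambda)\geq 1-d$'' is governed by the \emph{infimum} of $1-d$ over admissible $d$ (hence is essentially vacuous), not by its supremum $1-a$ as you assert. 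For the theorem to be true, the quantifier must be read universally --- for every $d>a$, the restriction of $q$ to $(0,d)$ together with $\{\widetilde{m}(\lambda_n)\}_{n\in\mathbb{N}}$ determines $q$ --- and then the bookkeeping does close: for each fixed $d>a$ the relevant exponential system must be complete on an interval of length proportional to $1-d$, which by $R(\Lambda)=2\pi D^*(\Lambda)$ holds whenever $1-d<\pi D^*(\Lambda)$ and fails whenever $1-d>\pi D^*(\Lambda)$; letting $d\downarrow a$ gives exactly $\pi D^*(\Lambda)\geq 1-a$, and this universal reading --- not the existential one --- is how the open condition $d>a$ absorbs the endpoint ambiguity of Beurling--Malliavin. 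Your sentence claiming that existence of such a $d$ is equivalent to $\pi D^*(\Lambda)\geq 1-a$ is a genuine logical error, not a normalization issue.

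The second gap is in stage (i), and you half-acknowledge it yourself. Your transmutation argument, even granted in full, yields only one implication: equal data force an entire function of exponential type comparable to $\pi-d$ to vanish on $\Lambda$, so if $\Lambda$ is a uniqueness set for the corresponding Paley--Wiener class then $q_1=q_2$. The equivalence also requires the converse: when $\pi D^*(\Lambda)<1-a$, one must \emph{construct} two distinct $L^2$ potentials agreeing on $(0,d)$ whose $\widetilde{m}$-functions agree at every point of $\Lambda$, starting from a nonzero Paley--Wiener function vanishing on $\Lambda$. For real eigenvalue data this inverse construction is the substance of Horv\'{a}th's theorem \cite{HOR}, but Theorem \ref{HORV} as quoted applies only to $\lambda_n\in\sigma_{\alpha,0}$, i.e.\ to zeros of a fixed entire function; for values of $\widetilde{m}$ at arbitrary, possibly non-real points it does not apply, and supplying this step is precisely the contribution of \cite{MP} (via their Toeplitz-kernel and model-space machinery). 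Labeling this ``the main obstacle'' and deferring it means the proposal, as written, establishes at most the implication $(2)\Rightarrow(1)$ --- and even that only after the quantifier in (1) is repaired as above.
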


Makarov and Poltoratski's observation shows that Horv\'{a}th's theorem establishes equivalence between mixed spectral problems for 
Schr\"{o}dinger operators and the Beurling–Malliavin problem on completeness of exponentials in $L^2$ spaces.

In the same paper they obtained an uncertainty version of Borg's theorem.
\begin{theorem}[Makarov, Poltoratski \cite{MP}]
 Let $\{I_n\}_{n \in \mathbb{N}}$ be a sequence of intervals on $\mathbb{R}$ and $q\in L^2(0,\pi)$. 
 The following statements are equivalent:
 \begin{enumerate}
  \item The condition $\sigma_{DD} \cup \sigma_{ND} \subset \cup_{n \in \mathbb{N}}I_n$ and $q$ on $(0,\epsilon)$ for some 
  $\epsilon ~\textgreater~ 0$ determine the potential $q$.
  \item For any long sequence of intervals $\{J_n\}_{n \in \mathbb{N}}$, 
  \begin{equation*}
   \frac{\sum_{I_n \cap J_n} \log_{-}|I_n|}{|J_n|} \nrightarrow 0
  \end{equation*}
  as $n \rightarrow \infty$.

 \end{enumerate}
\end{theorem}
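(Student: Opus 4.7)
The plan is to recast both statements as conditions on an auxiliary entire function and to apply the Beurling--Malliavin multiplier machinery that underlies Makarov--Poltoratski's approach. First I would use the observation (visible already in Example \ref{exmp}) that $\sigma_{DD}$ is the set of poles and $\sigma_{ND}$ the set of zeros of the $m$-function $m_{0,0}$; the hypothesis $\sigma_{DD}\cup\sigma_{ND}\subset\bigcup_n I_n$ then reads as \emph{the entire zero/pole divisor of $m_{0,0}$ is trapped in the interval cover $\{I_n\}$.} By the Borg--Levinson and Marchenko theorems, knowing $\sigma_{DD}$ and $\sigma_{ND}$ exactly already determines $q$, so the content of (1) is whether the interval localization, together with $q$ on a small interval $(0,\epsilon)$, is sharp enough to pin down the two spectra.

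For the core reduction, suppose two $L^{2}$ potentials $q_{1},q_{2}$ coincide on $(0,\epsilon)$ and both give combined spectra inside $\bigcup_n I_n$. Using the transmutation operator (or equivalently comparing the de Branges functions $E_{q_j}(z)=c_{z}^{(j)}(\pi)-i s_{z}^{(j)}(\pi)$ associated to each potential), I would build an entire function $F(\sqrt{z})$ of exponential type at most $2(\pi-\epsilon)$ whose zero set contains $\sigma_{DD}^{(1)}\cup\sigma_{DD}^{(2)}\cup\sigma_{ND}^{(1)}\cup\sigma_{ND}^{(2)}\subset\bigcup_n I_n$; concretely one can take a normalized difference of the two Hermite--Biehler representatives, the point being that agreement of $q_1,q_2$ on $(0,\epsilon)$ is exactly what lets one split off an exponential factor of type $2\epsilon$ and reduce the type of the quotient. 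Once this function is in hand, the uniqueness question for $q$ becomes the question of whether $F$ must vanish identically.

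For the implication (2) $\Rightarrow$ (1), I would apply the Beurling--Malliavin multiplier theorem in its "interval-cover" form: a nonzero entire function of bounded exponential type cannot have its zero set trapped in $\bigcup_n I_n$ when, for every long sequence $\{J_n\}$, the logarithmic sum
\[
\frac{\sum_{I_n\cap J_n}\log_{-}|I_n|}{|J_n|}
\]
does not tend to $0$. Thus (2) forces $F\equiv 0$, which yields $m_1\equiv m_2$, and Marchenko's theorem gives $q_1=q_2$ a.e. on $(0,\pi)$. For the converse (1) $\Rightarrow$ (2), I would argue by contraposition: if some long $\{J_n\}$ produces a vanishing logarithmic average, the multiplier theorem provides a nontrivial entire function of exponential type less than $2\pi$ whose zero set lies in $\bigcup_n I_n$. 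I would then promote this analytic object to a genuine spectral object via the correspondence between meromorphic inner functions and Schr\"odinger $m$-functions (the bijection underlying the de Branges/Krein theory exploited in \cite{MP}), producing two distinct $L^{2}$ potentials agreeing on a neighborhood of $0$ with combined spectra inside $\bigcup_n I_n$, contradicting~(1).

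The main obstacle I anticipate is the converse construction: extracting from an abstract entire function a pair of \emph{actual} Schr\"odinger potentials with prescribed spectral localization, matching boundary conditions at $0$ on a true interval (not merely formally), is delicate and is where the hypothesis $q\in L^{2}(0,\pi)$ becomes essential — the $\ell^{2}$ eigenvalue corrections from the asymptotics (\ref{asy00}) and (\ref{asy10}) are exactly what fits the Hermite--Biehler / de Branges space framework needed for the multiplier argument, and weaker regularity would break both the type computation for $F$ and the inverse construction. The forward direction is comparatively technical but conceptually standard once the correct form of the Beurling--Malliavin theorem (matching the "$\log_{-}|I_n|$ over long $J_n$" statement) is identified.
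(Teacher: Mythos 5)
You should first note a structural point: the paper contains no proof of this statement at all. It appears in Section 3.2 as a quoted survey result, attributed to and cited from \cite{MP} (the ``two spectra theorem with uncertainty''), so there is no internal argument to compare yours against; the only fair benchmark is whether your sketch could plausibly reproduce the Makarov--Poltoratski proof. Your general framing --- recast the uniqueness question as zero localization for an auxiliary entire function and invoke completeness-of-exponentials machinery --- is indeed in the spirit of \cite{MP}, but two of your central steps do not hold up as stated.

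First, the auxiliary function $F$ cannot simultaneously have the two properties your argument needs. The function you build from the agreement of $q_1,q_2$ on $(0,\epsilon)$ (a combination such as $u_2w_1-u_1w_2$ of Hermite--Biehler representatives, which is the standard object in Borg/Levinson-type uniqueness proofs) does have exponential type reduced to roughly $2(\pi-\epsilon)$ in the $\sqrt z$ variable, but its zero set is \emph{not} $\sigma_{DD}^{(1)}\cup\sigma_{DD}^{(2)}\cup\sigma_{ND}^{(1)}\cup\sigma_{ND}^{(2)}$: it vanishes where the two $m$-functions agree, which has no a priori relation to $\bigcup_n I_n$. Conversely, the function whose zeros are exactly the union of the four spectra is the product of the four characteristic functions, which has full type $4\pi$ and is completely insensitive to the agreement of the potentials near $0$, so no type reduction is available for it. Your reduction silently conflates these two functions, and the whole scheme collapses at that point. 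Second, the ``interval-cover form'' of the Beurling--Malliavin multiplier theorem that you invoke for $(2)\Rightarrow(1)$ does not exist as a citable result: the theorems of \cite{BM,BM2} concern densities of point sequences and the radius of completeness, whereas condition (2), with its averages of $\log_{-}|I_n|$ over long sequences $\{J_n\}$, is an energy-type (P\'olya-type) condition. In \cite{MP} precisely this equivalence is the hard content, established through their Toeplitz-kernel uncertainty machinery; quoting it as a known ``form of BM'' is circular, i.e.\ it assumes the theorem being proved. The same applies to the converse half: promoting a multiplier-type entire function to a pair of genuine $L^2$ potentials --- with Hermite--Biehler structure, interlacing spectra obeying the asymptotics (\ref{asy00}) and (\ref{asy10}) with $\ell^2$ corrections, and prescribed agreement on a neighborhood of $0$ --- is the delicate inverse construction, and your sketch leaves it entirely to a black box you correctly flag but do not fill. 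So the proposal identifies the right landscape but has genuine gaps at both the reduction and the harmonic-analysis core.
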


\section{\bf {An inverse spectral problem with mixed data}}
 
  \subsection{\bf{The main result with Dirichlet-Dirichlet and Neumann-Dirichlet boundary conditions}}

  We prove our main result, Theorem \ref{DDNDthm}, by representing the Weyl-Titchmarsh $m$-function as an infinite product
  in terms of Dirichlet-Dirichlet ($\alpha = 0$, $\beta = 0$) and Neumann-Dirichlet ($\alpha = \pi/2$, $\beta = 0$) spectra. 
  We follow the notations introduced in Example \ref{exmp} for these two spectra, i.e. $\sigma_{DD} := \sigma_{0,0}$ and $\sigma_{ND} := \sigma_{\pi/2,0}$. 
  For simplicity, let us also denote $m_{0,0}$ by $m$. For any infinite product (or sum) defined on an open subset 
  $\Omega \subset \mathbb{C}$, normal convergence means that the product (or the sum) converges uniformly on every compact subset 
  of $\Omega$.
 
 \begin{lemma}\label{lmm1}
  The $m$-function of a regular Schr\"{o}dinger operator $(q\in L^1(0,\pi))$ for Dirichlet-Dirichlet boundary conditions ($\alpha = 0$, 
  $\beta = 0$) has representations in terms of Dirichlet-Dirichlet and Neumann-Dirichlet spectra:
  \begin{equation}\label{mfcn1}
   m(z) = C \left(\frac{z}{b_1}-1\right) \prod_{n\in \mathbb{N}}\left(\frac{z}{b_{n+1}}-1\right)\left(\frac{z}{a_n}-1\right)^{-1},
  \end{equation}
  and
 \begin{equation}\label{mfcn2}
   m(z) = -C \prod_{n\in \mathbb{N}}\left(\frac{z}{b_{n}}-1\right)\left(\frac{z}{a_n}-1\right)^{-1},
  \end{equation}
  where $C ~\textgreater~ 0$, $\sigma_{DD} = \{a_n\}_{n \in \mathbb{N}}$, $\sigma_{ND} = \{b_n\}_{n \in \mathbb{N}}$ and 
  the product converges normally on $\displaystyle \mathbb{C} \text{\textbackslash} \cup_{n \in \mathbb{N}} a_n$.
 \end{lemma}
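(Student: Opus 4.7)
The plan is to represent $m$ as a ratio of two entire functions of order $1/2$, identify their zero sets with the two spectra, and apply Hadamard factorization. With $\alpha=\beta=0$ the fundamental solutions from Section~2.2 satisfy $s_z(0)=0$, $s_z'(0)=1$, $c_z(0)=1$, $c_z'(0)=0$, and the Dirichlet condition $u_z(\pi)=0$ at the right endpoint, combined with $u_z=c_z+m(z)s_z$, yields
\[
  m(z) = -\frac{c_z(\pi)}{s_z(\pi)}.
\]
A value $z$ lies in $\sigma_{DD}$ iff a nontrivial solution vanishes at both endpoints, i.e.\ iff $s_z(\pi)=0$; similarly $c_z(\pi)=0$ iff $z\in\sigma_{ND}$. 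All these zeros are simple (at a zero, the $z$-derivative of $s_z(\pi)$ or $c_z(\pi)$ computes the nonzero $L^2$-norm of the corresponding eigenfunction via a standard Green's identity argument).

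Next I will check that $s_z(\pi)$ and $c_z(\pi)$ are entire in $z$ of order exactly $1/2$. The standard Volterra integral equation for $s_z$, iterated, gives $|s_z(\pi)|\leq K\exp(\pi|\Im\sqrt z|)$ with $K$ depending only on $\|q\|_{L^1}$, and analogously for $c_z(\pi)$; this bounds the order by $1/2$. The asymptotics (\ref{asy00}) and (\ref{asy10}) give $a_n,b_n\sim n^2$, so the zero-counting function grows like $\sqrt r$, matching order $1/2$ exactly. Since $1/2$ is non-integral, Hadamard's theorem with genus $0$ produces nonzero constants $A,B$ with
\[
  s_z(\pi) = A\prod_{n\in\mathbb{N}}\left(1-\frac{z}{a_n}\right), \qquad
  c_z(\pi) = B\prod_{n\in\mathbb{N}}\left(1-\frac{z}{b_n}\right),
\]
each product converging normally on $\mathbb{C}$ (since $\sum 1/a_n,\sum 1/b_n<\infty$). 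Taking the quotient and writing $(1-z/x)=-(z/x-1)$ gives
\[
  m(z) = -\frac{B}{A}\prod_{n\in\mathbb{N}}\frac{z/b_n-1}{z/a_n-1}.
\]

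To pin down the sign, I will use Everitt's asymptotic $m(z)=i\sqrt z+o(1)$: along $z=-t$ with $t\to+\infty$ one has $m(-t)\sim -\sqrt t<0$, while each factor $(z/b_n-1)/(z/a_n-1)$ is strictly positive for $z<0$ (as $a_n,b_n>0$). Hence the product is positive and $-B/A<0$, so setting $C:=B/A>0$ yields (\ref{mfcn2}). Formula (\ref{mfcn1}) then follows by comparing partial products: the ratio of the $N$th partial product in (\ref{mfcn1}) to that in (\ref{mfcn2}) telescopes to the single factor $(z/b_{N+1}-1)$, which tends to $-1$ as $N\to\infty$. This sign flip converts the $-C$ of (\ref{mfcn2}) into $+C$ in (\ref{mfcn1}), and normal convergence on $\mathbb{C}\setminus\bigcup_n\{a_n\}$ of both products is inherited from the Hadamard factorizations.

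The principal technical obstacle is the verification of order exactly $1/2$ and simplicity of the zeros of $s_z(\pi)$ and $c_z(\pi)$; once these are in place, Hadamard factorization, Everitt's asymptotic, and the short reindexing argument close the proof.
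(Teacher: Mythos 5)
Your argument is correct, but it takes a genuinely different route from the paper's. The paper proves Lemma \ref{lmm1} with the harmonic-analysis toolkit of Appendix A: it passes to the meromorphic inner function $\Theta=(m-i)/(m+i)$, identifies the set $E=\{x\in\mathbb{R}:\Im\Theta>0\}=(-\infty,b_1)\cup\bigcup_{n\in\mathbb{N}}(a_n,b_{n+1})$, writes $m=\exp(i\pi S_{\chi_E}-\pi c)$, computes the Hilbert transform of $\chi_E$ explicitly to obtain the product formula a.e.\ on $\mathbb{R}$, and upgrades it to all of $\mathbb{C}$ by the identity theorem for meromorphic functions; it derives (\ref{mfcn1}) first and gets (\ref{mfcn2}) from $z/b_n-1\to-1$, i.e.\ the same telescoping you run in reverse. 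You instead use the classical entire-function route: $m(z)=-c_z(\pi)/s_z(\pi)$, identification of the simple real zero sets of $s_z(\pi)$ and $c_z(\pi)$ with $\sigma_{DD}$ and $\sigma_{ND}$, order-$1/2$ growth bounds, Hadamard factorization with genus zero, and Everitt's asymptotics to fix the sign. Your route is more elementary and self-contained given standard ODE facts (it is close to the classical treatments in the Levitan--Gasymov/Marchenko tradition), whereas the paper's route stays inside the Herglotz/inner-function framework it reuses throughout and yields the boundary formula on $\mathbb{R}$, with the interlacing structure built in, in one stroke. Two points are worth tightening in your write-up: (i) Everitt's asymptotic as quoted holds for $z\to\infty$ in the upper half plane, so applying it along $z=-t$ requires the (standard but unstated) extension to the negative real axis; alternatively you can avoid Everitt altogether, since $m$ is Herglotz, hence by (\ref{Hrep}) real and strictly increasing on $(-\infty,a_1)$, and its first zero is $b_1\in(0,a_1)$ by interlacing, so $m(0)<0$, while (\ref{mfcn2}) evaluated at $z=0$ gives $m(0)=-C$, forcing $C>0$; (ii) the absence of a monomial factor $z^m$ in your Hadamard products silently uses the paper's normalization $\sigma_{\alpha,\beta}\subset\mathbb{R}_+$ (so that $s_0(\pi)\neq 0\neq c_0(\pi)$), which should be stated explicitly.
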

 
 \begin{proof}
  Let $m = u'_{z}(0)/u_{z}(0)$ be the Weyl $m$-function 
with boundary conditions $u(\pi)=0$, $u'(\pi)=-1$. Since $m$ is a meromorphic Herglotz function, $\Theta:=\frac{m-i}{m+i}$ is the 
corresponding meromorphic inner function. See Appendix A for the definition of a meromorphic inner function and the relation between 
Herglotz and inner functions.

Let us define the set $E$ in $\mathbb{R}$ as $E:=\{z\in \mathbb{R}: Im\Theta ~\textgreater~ 0\}$.
The set $E$ is given in terms of $\sigma_{DD}=\{a_n\}_{n\in\mathbb{N}}$ and $\sigma_{ND}=\{b_n\}_{n\in\mathbb{N}}$, namely

\begin{equation*}
 E = (-\infty , b_1) \cup \cup_{n \in \mathbb{N}} (a_{n} , b_{n+1}).
\end{equation*}

The characteristic function of $E$ coincides with the real part of the function 
$\frac{1}{i\pi}\log(i \frac{1+\Theta}{1-\Theta})$ a.e. on $\mathbb{R}$. Since $m$ is a meromorphic Herglotz function mapping $\mathbb{R}$
to $\mathbb{R}$ a.e., $\log(m)=\log(i \frac{1+\Theta}{1-\Theta})$ is a well-defined holomorphic function on $\mathbb{C}_+$
and its imaginary part takes values $0$ and $\pi$ on $\mathbb{R}$. Therefore 
$\frac{1}{i\pi}\log(m)=\frac{1}{i\pi}\log(i \frac{1+\Theta}{1-\Theta})$ and the Schwarz integral of $\chi_E$, $S_{\chi_E}$ 
differ by a purely imaginary number on a.e. $\mathbb{R}$, i.e.

\begin{equation*}
\frac{1}{i\pi}\log\left(i \frac{1+\Theta}{1-\Theta}\right) = S_{\chi_E}+ic = P_{\chi_E}+iQ_{\chi_E}+ic, \quad c\in\mathbb{R},
\end{equation*}
where $P$ and $Q$ are Poisson and conjugate Poisson integrals of $\chi_E$, respectively. Definitions of $S$, $P$ and $Q$ appear in 
the appendix.
Therefore
\begin{equation*}
i \frac{1+\Theta}{1-\Theta} = \exp(i\pi S_{\chi_E} - \pi c) = \exp(i\pi P_{\chi_E} - \pi Q_{\chi_E} - \pi c),\quad c\in\mathbb{R}.
\end{equation*}

On the real line, $\exp(S_h) = \exp(h+i\widetilde{h})$ for any Poisson-summable function $h$, where $\widetilde{h}$ is the Hilbert transform
of $h$. If we let $h:=\chi_E$, then 

\begin{equation*}
\widetilde{h}(x) =  \frac{1}{\pi} \left[\log\left(\frac{\sqrt{1+b^2_1}}{|x-b_1|}\right) 
+ \sum_{n\in\mathbb{N}} \log\left(\frac{|x-a_n|}{|x-b_{n+1}|}\right) 
+ \frac{1}{2} \sum_{n\in\mathbb{N}} \log\left(\frac{1+b^2_{n+1}}{1+a^2_n}\right)\right].
\end{equation*}
Therefore
\begin{equation*}
\exp(-\pi\widetilde{h}(x)) = \frac{|x-b_1|}{\sqrt{1+b^2_1}} \prod_{n\in\mathbb{N}} \frac{|x-b_{n+1}|}{|x-a_n|} 
 \prod_{n\in\mathbb{N}}\left(\frac{1+a^2_n}{1+b^2_{n+1}}\right)^{1/2}.
\end{equation*}
Noting that $\exp(i\pi h)$ is $-1$ on $E$ and $1$ on $\mathbb{R}\text{\textbackslash}E$, the Weyl $m$-function can be given 
in terms of $\sigma_{DD}$ and $\sigma_{ND}$ a.e. on $\mathbb{R}$:
\begin{align*}
m(x) &= i \frac{1+\Theta(x)}{1-\Theta(x)}\\ 
&= \exp(i\pi S_{\chi_E} - \pi c)\\ 
&= \frac{x-b_1}{\sqrt{1+b^2_1}} \prod_{n\in\mathbb{N}} \frac{x-b_{n+1}}{x-a_n} \prod_{n\in\mathbb{N}}\left(\frac{1+a^2_n}{1+b^2_{n+1}}\right)^{1/2}\exp(-\pi c)\\
&= C \left(\frac{x}{b_1}-1\right) \prod_{n\in \mathbb{N}}\left(\frac{x}{b_{n+1}}-1\right)\left(\frac{x}{a_n}-1\right)^{-1}\\
\end{align*}
where $C = \exp(-\pi c)\prod_{n\in\mathbb{N}}\frac{\sqrt{1+a^2_n}}{a_n}\frac{b_n}{\sqrt{1+b^2_n}}$.\\ 
Since $m(z)$ and $C \left(\frac{z}{b_1}-1\right) \prod_{n\in \mathbb{N}}\left(\frac{z}{b_{n+1}}-1\right)\left(\frac{z}{a_n}-1\right)^{-1}$ 
are meromorphic functions that agree a.e. on $\mathbb{R}$, they are identical by the identity theorem for meromorphic functions.
This gives the first representation (\ref{mfcn1}). The second representation (\ref{mfcn2}) follows from normal convergence of 
$ \{z/b_n-1\}_{n\in \mathbb{N}}$ to $-1$ in $\mathbb{C}$.
 
 \end{proof}

Using this representation of the $m$-function, we prove our main result. 
At this point let us note that the points in a spectrum are enumerated in increasing order, which is done following the asymptotics 
(\ref{asy11}), (\ref{asy00}), (\ref{asy10}) and (\ref{asy01}).

\begin{theorem}\label{DDNDthm} Let $q \in L^1(0,\pi)$ and $A \subseteq \mathbb{N}$. Then $\{a_n\}_{n \in \mathbb{N}}$, 
$\{b_n\}_{n \in \mathbb{N}\text{\textbackslash} A}$ and $\{\gamma_n\}_{n \in A}$ determine the potential $q$, 
where $\sigma_{DD} = \{a_n\}_{n \in \mathbb{N}}$, $\sigma_{ND} = \{b_n\}_{n \in \mathbb{N}}$ are Dirichlet-Dirichlet and Neumann-Dirichlet 
spectra and $\{\gamma_n\}_{n \in \mathbb{N}}$ are 
point masses of the spectral measure $\mu_{0,0} = \sum_{n \in \mathbb{N}}\gamma_n \delta_{a_n}$. 
\end{theorem}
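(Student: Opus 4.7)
The plan is to show that any two potentials $q_1, q_2 \in L^1(0,\pi)$ producing the same data must produce identical Weyl $m$-functions; once this is established, Marchenko's theorem forces $q_1 = q_2$ a.e. Applying Lemma \ref{lmm1} to each potential, I would write
\begin{equation*}
m_j(z) = -C_j \prod_{n\in\mathbb{N}}\frac{z/b_n^{(j)} - 1}{z/a_n - 1},\qquad j=1,2,
\end{equation*}
with $\sigma_{DD}=\{a_n\}$ shared and $\sigma_{ND}(q_j)=\{b_n^{(j)}\}$. Because $b_n^{(1)}=b_n^{(2)}$ for every $n\notin A$, the $a_n$-factors and the factors indexed by $n\notin A$ all cancel in the quotient, yielding
\begin{equation*}
F(z):=\frac{m_1(z)}{m_2(z)}=\frac{C_1}{C_2}\prod_{n\in A}\frac{z/b_n^{(1)}-1}{z/b_n^{(2)}-1}.
\end{equation*}

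I would then extract two independent constraints on $F$. At each pole $a_n$ with $n\in A$, both $m_j$ have residue $-\gamma_n$, so $F$ is regular there with $F(a_n)=1$. Everitt's asymptotic $m_j(z)=i\sqrt{z}+o(1)$ in $\mathbb{C}_+$ forces $F(z)\to 1$ nontangentially at infinity. Consequently $F-1$ is a meromorphic function whose poles lie in $\{b_n^{(2)}\}_{n\in A}$, whose zero set contains $\{a_n\}_{n\in A}$, and which decays at infinity in the upper half plane.

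The last step is a rigidity argument forcing $F-1\equiv 0$. When $|A|=N<\infty$, $F$ is rational: writing it as $KP(z)/Q(z)$ with monic $P,Q$ of degree $N$ and root sets $\{b_n^{(1)}\}_{n\in A}$, $\{b_n^{(2)}\}_{n\in A}$, the limit $F\to 1$ gives $K=1$; then $F-1=(P-Q)/Q$ has numerator of degree at most $N-1$ yet vanishes at the $N$ distinct points $\{a_n\}_{n\in A}$, so $P\equiv Q$ and $F\equiv 1$. For infinite $A$ the same conclusion should follow from Hadamard factorization: both $m_j$ are meromorphic of order $1/2$ (since $a_n,b_n^{(j)}\sim n^2$), and $\sigma_{DD}(q_1)=\sigma_{DD}(q_2)$ forces $\int q_1=\int q_2$ via the asymptotics of Section 2.1, so $b_n^{(1)}-b_n^{(2)}=o(1)$ and the product for $F$ converges normally; then the decay of $F-1$ at infinity together with its vanishing on the order-$1/2$ sequence $\{a_n\}_{n\in A}$ forces the resulting genus-$0$ canonical-product quotient to be trivial. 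The principal obstacle I anticipate is making this last step rigorous for arbitrary (possibly sparse) $A\subseteq\mathbb{N}$, i.e. carefully balancing the cancellation between the prescribed zero divisor $\{a_n\}_{n\in A}$ and the polar divisor $\{b_n^{(2)}\}_{n\in A}$ to conclude that no nontrivial order-$1/2$ meromorphic function can satisfy all the required interpolation and asymptotic constraints. Once $F\equiv 1$, we have $m_1=m_2$, and Marchenko's theorem yields $q_1=q_2$ a.e.\ on $(0,\pi)$.
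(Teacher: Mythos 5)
Your reduction is sound as far as it goes, and in fact your quotient $F=m_1/m_2$ is exactly the function the paper works with: after cancelling the common factors, $F$ coincides with the ratio $G/\widetilde{G}$ that appears in Step 4 of the paper's proof, your interpolation conditions $F(a_n)=1$, $n\in A$, encode equality of residues, and your finite-$A$ rational-function argument is complete and correct. But for infinite $A$ you have only gestured at the decisive step, and the gesture does not close it. The Hadamard/order-$1/2$ bookkeeping you invoke is exactly borderline: the relevant canonical products have order precisely $1/2$ (not less), so Phragm\'en--Lindel\"of-type rigidity from decay along directions in $\mathbb{C}_+$ fails in general (compare $\cos\sqrt{z}$, which is bounded on $\mathbb{R}_+$ yet nonconstant), and the decay $F-1\to 0$ is only nontangential in the upper half-plane, away from the real axis where all zeros and poles sit. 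Worse, for sparse $A$ (say $A=\{2^k\}$) the interpolation set $\{a_n\}_{n\in A}$ has zero density on the $\sqrt{t}$ scale, so no counting or completeness argument of the kind you sketch can force the numerator $\kappa\,u - v$ of $F-1$ to vanish identically: a genus-zero quotient can a priori satisfy infinitely many sparse interpolation conditions together with nontangential decay. Establishing that it nevertheless cannot here is the entire content of the theorem, and it requires the additional structure you never use: interlacing of $\{a_n\}$ with $\{b_n^{(j)}\}$, positivity of the residues, and the spectral asymptotics $b_n^{(j)}=(n-\tfrac12)^2+c+o(1)$ forcing $b_n^{(1)}-b_n^{(2)}=o(1)$.

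For contrast, the paper exploits exactly this structure. It observes that $G$ (your $\kappa u(z)/\prod_{n\in A}(1-z/a_n)$, up to normalization) is a meromorphic Herglotz function because its zeros and poles are real and interlacing, so {\u C}ebotarev's theorem (Theorem \ref{Cebotarev}) gives $G(z)=dz+e+\sum_{n\in A}A_n\bigl(\frac{1}{a_n-z}-\frac{1}{a_n}\bigr)$ with the $A_n$ known; the whole problem collapses to uniqueness of the two real constants $d$ and $e$. The linear coefficient $d$ is pinned down by evaluating at the zeros $b_k$, $\widetilde{b}_k$ and using $\widetilde{b}_k-b_k=o(1)$ (which itself needs the observation that equal Dirichlet spectra force $\int_0^\pi q=\int_0^\pi\widetilde{q}$); the constant $e$ is then eliminated by a separate positivity argument in which $-G/\widetilde{G}$ — your $-F$, up to reciprocal — is shown to be Herglotz, its pole expansion is shown to be absolutely summable via a careful product estimate (the $\prod_{n=1}^{k-1}(1+\frac{1}{2n})=O(\sqrt{k})$ bound), and a limit as $k\to\infty$ of $H(a_k)-H(b_k)=1$ produces the contradiction. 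None of these ingredients appears in your outline, and you yourself flag the missing step as the ``principal obstacle''; since that step is where the theorem lives, the proposal as written is an honest reduction plus a complete proof only for $|A|<\infty$, not a proof of the statement.
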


\begin{proof}
 By representation (\ref{Hrep}) of the $m$-function as a Herglotz integral of the spectral measure, knowing $\gamma_n$ means knowing 
 $Res(m,a_n)$. Therefore, in terms of the $m$-function our claim says that the set of poles, $\{a_n\}_{n \in \mathbb{N}}$, the set of zeros 
 except the index set $A$, $\{b_n\}_{n \in \mathbb{N}\text{\textbackslash} A}$, and the residues with the same index set $A$, 
 $\{Res(m,a_n)\}_{n \in A}$ determine the $m$-function uniquely. Before starting to prove this claim let us briefly list the main steps 
 of the proof. We will use similar ideas to prove our results in non-matching index sets case and for general boundary conditions.\\
 
 \underline{Step 1:} Reduce the claim to the problem of unique recovery of the infinite product 
 \begin{equation*}
  G(z) := -C \prod_{n\in A}\left(\frac{z}{b_{n}}-1\right)\left(\frac{z}{a_n}-1\right)^{-1}
 \end{equation*}
from its sets of poles and residues.

 \underline{Step 2:} Observe that $G(z)$ is a meromorphic Herglotz function and has a representation in terms of its poles, residues and a 
 linear polynomial $dz+e$.
 
 \underline{Step 3:} Show uniqueness of $d$.
 
 \underline{Step 4:} Show uniqueness of $e$.
 
 \underline{Step 5:} Use the representation from Step 2 to get uniqueness of the two spectra and prove the claim by Borg's theorem.\\
 
 \underline{Step 1}\\
 
 From Lemma \ref{lmm1}, the Weyl $m$-function can be represented in terms of $\sigma_{DD}$ and $\sigma_{ND}$,
\begin{equation*}
m(z) = -C \prod_{n\in \mathbb{N}}\left(\frac{z}{b_{n}}-1\right)\left(\frac{z}{a_n}-1\right)^{-1}.
\end{equation*}

Note that for any $k\in A$, we know
\begin{equation}\label{con1}
Res(m,a_k) = C(b_k - a_k)\frac{a_k}{b_k} \prod_{n\in \mathbb{N},n\neq k}\left(\frac{a_k}{b_{n}}-1\right)\left(\frac{a_k}{a_n}-1\right)^{-1}.
\end{equation}

Let $m(z) = F(z)G(z)$, where $F$ and $G$ are two infinite products defined as
\begin{equation*}
 G(z) := -C \prod_{n\in A}\left(\frac{z}{b_{n}}-1\right)\left(\frac{z}{a_n}-1\right)^{-1}, \qquad
 F(z) := \prod_{n\in\mathbb{N} \text{\textbackslash} A}\left(\frac{z}{b_{n}}-1\right)\left(\frac{z}{a_n}-1\right)^{-1}
\end{equation*}

Also note that at any point of $\{a_n\}_{n \in A}$, the infinite product
\begin{equation}\label{con2}
 F(z)=\prod_{n\in\mathbb{N} \text{\textbackslash} A}\left(\frac{z}{b_{n}}-1\right)\left(\frac{z}{a_n}-1\right)^{-1}
\end{equation}
is known.

Conditions (\ref{con1}) and (\ref{con2}) imply that for any $k\in A$, we know

\begin{equation*}
Res(G,a_k) = \frac{Res(m,a_k)}{F(a_k)},
\end{equation*}
i.e. we know all of the poles and residues of $G(z)$, but none of its zeros. We claim that $G(z)$ can be uniquely recovered from this 
data set.\\

\underline{Step 2}\\

Let us observe that $arg(G(z)) = \pi - \sum_{n\in A}\left[arg(z-b_{n}) - arg(z-a_n)\right]$. Since zeros and poles of $G(z)$ 
are real and interlacing, $ 0 ~\textless~ arg(G(z)) ~\textless~ \pi$ for any $z$ in the upper half plane, i.e. $G(z)$ is a 
meromorphic Herglotz function. Therefore by {\u C}ebotarev's theorem, see Theorem \ref{Cebotarev}, $G(z)$ has the representation

\begin{equation}\label{Cebtyperep1}
 G(z) = dz + e + \sum_{n \in A}A_n\left(\frac{1}{a_n-z}-\frac{1}{a_n}\right),
\end{equation}
where $d\geq 0$, $e \in \mathbb{R}$ and $\sum_{n \in A}A_n/a_n^2$ is absolutely convergent.\\

Note that $A_k = -Res(G(z),a_k)$ for any $k \in A$, which means there are only two unknowns on the right hand side of (\ref{Cebtyperep1}), 
namely constants $d$ and $e$.\\

\underline{Step 3}\\

Now let us show uniqueness of $G(z)$ by showing uniqueness of $dz + e$. Let $\widetilde{G}(z)$ be another infinite product sharing same 
properties with $G(z)$, namely:

\begin{itemize}
 \item The infinite product $\widetilde{G}(z)$ is defined as
 \begin{equation*}
  \widetilde{G}(z) := -\widetilde{C} \prod_{n\in A}\left(\frac{z}{\widetilde{b}_{n}}-1\right)\left(\frac{z}{\widetilde{a}_n}-1\right)^{-1},
 \end{equation*}
where $\widetilde{C} ~\textgreater~ 0$, the set of poles $\{\widetilde{a}_n\}_{n \in A}$ satisfies asymptotics (\ref{asy00}) and the set of 
zeros $\{\widetilde{b}_n\}_{n \in A}$ satisfies asymptotics (\ref{asy10}).\\
 \item $G(z)$ and $\widetilde{G}(z)$ share same set of poles with equivalent residues at the corresponding poles, i.e. $\widetilde{a}_k = a_k$ and 
 $Res(\widetilde{G},a_k) =  Res(G,a_k)$ for any $k \in A$.\\
 \item By the equivalence of poles and residues of $G(z)$ and $\widetilde{G}(z)$ and {\u C}ebotarev's theorem, $\widetilde{G}(z)$ has the 
 representation
 \begin{equation}\label{CebTypeRepr1}
  \widetilde{G}(z) = \widetilde{d}z + \widetilde{e} + \sum_{n \in A}A_n\left(\frac{1}{a_n-z}-\frac{1}{a_n}\right),
 \end{equation}
where $\widetilde{d}\geq 0$, $\widetilde{e} \in \mathbb{R}$.
\end{itemize}
Let $k \in A$ and $b_k \neq \widetilde{b}_k$. Since $G(b_k)=0$ and $\widetilde{G}(\widetilde{b}_k)=0$, using representations (\ref{Cebtyperep1}) and 
(\ref{CebTypeRepr1}) we get
\begin{align}
 -db_k-e &= \sum_{n \in A} A_n\left(\frac{1}{a_n-b_k} - \frac{1}{a_n}\right), \label{equ1} \\
 -\tilde{d}\tilde{b}_k-\widetilde{e} &= \sum_{n \in A} A_n\left(\frac{1}{a_n-\tilde{b}_k} - \frac{1}{a_n}\right) \textit{and} \label{equ2} \\
 G(\tilde{b}_k) &= G(\tilde{b}_k) - \widetilde{G}(\tilde{b}_k) = (d-\tilde{d})\tilde{b}_k + e-\tilde{e} \label{equ3}
\end{align}

Replacing $e-\widetilde{e}$ by $G(\widetilde{b}_k) - (d-\widetilde{d})\widetilde{b}_k$ and taking difference of (\ref{equ1}) and (\ref{equ2}) 
we get
\begin{equation*}
 db_k - \tilde{d}\tilde{b}_k - d\tilde{b}_k + \tilde{d}\tilde{b}_k + G(\tilde{b}_k) = 
 \sum_{n \in A} A_n\left(\frac{\widetilde{b}_k - b_k}{(a_n - \widetilde{b}_k)(a_n - b_k)}\right)
\end{equation*}

Dividing both sides by $\widetilde{b}_k(\widetilde{b}_k - b_k)$ we get 
\begin{equation}\label{equ4}
 \frac{-d}{\widetilde{b}_k} + \frac{G(\tilde{b}_k)}{\widetilde{b}_k(\widetilde{b}_k - b_k)} = 
 \sum_{n \in A} \left(\frac{A_n}{\widetilde{b}_k(a_n - \widetilde{b}_k)(a_n - b_k)}\right)
\end{equation}

Note that since $\{a_n\}_{n \in A}$ satisfies asymptotics (\ref{asy00}) and $\{b_n\}_{n \in A}$, $\{\widetilde{b}_n\}_{n \in A}$ satisfy 
asymptotics (\ref{asy10}), the inequality
\begin{equation}\label{eigenvaluesBound}
 |\widetilde{b}_k(a_n - b_k)(a_n - \widetilde{b}_k)|^{-1} \leq |\widetilde{b}_n(a_n - b_n)(a_n - \widetilde{b}_n)|^{-1} \leq 2/a_n^2
\end{equation}
is valid for any $k \in A$, for sufficiently large $n \in A$. In addition, $\sum_{n \in A}A_n/a_n^2$ is 
absolutely convergent. Therefore right hand side of (\ref{equ4}) converges to $0$ 
as $k$ goes to $\infty$. Also note that by (\ref{equ3}), left hand side of (\ref{equ4}) is
\begin{equation}\label{equ5}
 \frac{-d}{\widetilde{b}_k} + \frac{G(\tilde{b}_k)}{\widetilde{b}_k(\widetilde{b}_k - b_k)} =
 \frac{-d}{\widetilde{b}_k} + \frac{G(\tilde{b}_k) - \widetilde{G}(\tilde{b}_k)}{\widetilde{b}_k(\widetilde{b}_k - b_k)} =
 \frac{1}{\tilde{b}_k - b_k}\left[d-\tilde{d}+\frac{e-\tilde{e}}{\tilde{b}_k}\right] - \frac{d}{\tilde{b}_k}.
\end{equation}
Now let us show $\tilde{b}_k - b_k$ converges to $0$ as $k$ goes to $\infty$. Recall that poles of $G$ and $\widetilde{G}$ satisfy asymptotics
\begin{equation*}
  n^2 + \frac{1}{\pi}\int_0^{\pi}q(x)dx + \alpha_n \qquad \textit{and} \qquad
  n^2 + \frac{1}{\pi}\int_0^{\pi}\widetilde{q}(x)dx + \widetilde{\alpha}_n
\end{equation*}
respectively, where $\alpha_n = o(1)$ and $\widetilde{\alpha}_n = o(1)$ as $n \to \infty$. Equivalance of poles of $G$ and $\widetilde{G}$ 
imply equivalence of $\int_0^{\pi}q(x)dx$ and $\int_0^{\pi}\widetilde{q}(x)dx$. Therefore $b_k$ and $\tilde{b}_k$ satisfy asymptotics
\begin{equation*}
 \left(n-\frac{1}{2}\right)^2 + \frac{1}{\pi}\int_0^{\pi}q(x)dx + \beta_n \qquad \textit{and} \qquad
 \left(n-\frac{1}{2}\right)^2 + \frac{1}{\pi}\int_0^{\pi}q(x)dx + \widetilde{\beta}_n,
\end{equation*}
where $\beta_n = o(1)$ and $\widetilde{\beta}_n = o(1)$ as $n \to \infty$. Hence $\tilde{b}_k - b_k = o(1)$ as $k$ goes to $\infty$. 
Therefore by (\ref{equ5}), left hand side of (\ref{equ4}) goes to $\infty$ if $d-\tilde{d} \neq 0$, so we get a contradiction unless 
$d = \tilde{d}$. This implies that $G(z) - \widetilde{G}(z)$ is a real constant, which is $G(0) - \widetilde{G}(0) = \widetilde{C} - C$.\\

\underline{Step 4}\\

Now let us show $\widetilde{C} - C = 0$. Positivity of 
$(\widetilde{b}_k - b_{n})/(\widetilde{b}_k - a_n)$ for all $n\neq k$, which follows from interlacing 
property of $\{a_n\}_{n \in \mathbb{N}}$ and $\{b_n\}_{n \in \mathbb{N}}$, implies 
$sgn(\widetilde{C} - C) = sgn(\widetilde{\beta_k} - \beta_k)$ for all $k\in \mathbb{N}$, i.e. $\{b_n\}_{n \in A}$ and 
$\{\widetilde{b}_n\}_{n \in A}$ are interlacing sequences.\\

Let us assume $\widetilde{C} ~\textgreater~ C$ and wlog the two spectra lie on the positive real line. This implies 
$\widetilde{b}_n ~\textgreater~ b_n$ for all $n \in A$. Observe that $ \prod_{n \in A} \widetilde{b}_n/b_n$ is finite, 
since 
$$
\sum_{n \in A}\frac{\widetilde{b}_n-b_n}{b_n} = \sum_{n \in A}\frac{\widetilde{\beta}_n-\beta_n}{b_n} \leq 
\max_{n \in A}(\widetilde{\beta}_n-\beta_n) \sum_{n \in A}\frac{1}{b_n} ~\textless~ \infty.
$$
Therefore the infinite product $H(z) := G(z)/\widetilde{G}(z)$ is represented as
$$
H(z) := \frac{G(z)}{\widetilde{G}(z)} = \frac{C}{\widetilde{C}}\prod_{n \in A}\frac{z-b_n}{b_n}\frac{\tilde{b}_n}{z-\tilde{b}_n} = 
\frac{C}{\widetilde{C}}\prod_{n \in A}\frac{\tilde{b}_n}{b_n}\prod_{n \in A}\frac{z-b_n}{z-\tilde{b}_n}.
$$
Let us denote the positive real coefficient of $H(z)$ by $ N := (C/\widetilde{C})\prod_{n \in A}\tilde{b}_n/b_n$. 
Then by interlacing property of $\{b_n\}_{n \in A}$ and $\{\widetilde{b}_n\}_{n \in A}$, the infinite product $-H$ is a meromorphic Herglotz 
function, i.e. by Theorem \ref{Cebotarev} it is represented as
\begin{equation}\label{CebtyperepB}
 -H(z) = -N \prod_{n \in A}\frac{z-b_n}{z-\tilde{b}_n} = Dz + E + \sum_{n \in A} B_n \left(\frac{1}{z-\tilde{b}_n} + \frac{1}{\tilde{b}_n}\right),
\end{equation}
where $B_k = -Res(H,\tilde{b}_k)$ and $D,E \in \mathbb{R}$.

Now let us show that $\{B_k/\tilde{b}_k\}_{k \in A}$ is summable.

\begin{align*}
 \left| \frac{B_k}{\widetilde{b}_k} \right| &= 
 N ~\frac{\widetilde{b}_k - b_k}{b_k} \prod_{n \in A, n \neq k} \frac{\widetilde{b}_k - b_n}{\widetilde{b}_k - \widetilde{b}_n} \\
 &\leq N ~\frac{\widetilde{b}_k - b_k}{\widetilde{b}_k} 
 \prod_{n \in A, 1 \leq n \leq k-1} \frac{\widetilde{b}_k - b_n}{\widetilde{b}_k - \widetilde{b}_n} \\
 &= N ~\frac{\widetilde{b}_k - b_k}{\widetilde{b}_k} 
 \prod_{n \in A, 1 \leq n \leq k-1} \left(1 + \frac{\widetilde{b}_n - b_n}{\widetilde{b}_k - \widetilde{b}_n}\right) \\
 &=  N ~\frac{\widetilde{b}_k - b_k}{\widetilde{b}_k} \prod_{n \in A, 1 \leq n \leq k-1} 
 \left(1 + \frac{\widetilde{\beta}_n-\beta_n}{(k-1/2)^2-(n-1/2)^2+\widetilde{\beta}_{k}-\widetilde\beta_n}\right) \\
 &\leq  N ~\frac{\widetilde{b}_k - b_k}{\widetilde{b}_k} \prod_{n \in A, 1 \leq n \leq k-1} 
 \left(1 + \frac{\widetilde{\beta}_n-\beta_n}{(n+1-1/2)^2-(n-1/2)^2+\widetilde{\beta}_{k}-\widetilde\beta_n}\right) \\
 &\leq  N ~\frac{\widetilde{b}_k - b_k}{\widetilde{b}_k} ~M ~\prod_{n = 1}^{k-1}\left(1+\frac{1}{2n}\right),
\end{align*}
for sufficiently large $k$, where $M$ is a real constant independent of $k$. Since $\widetilde{b}_k - b_k = o(1)$, $\widetilde{b}_k = O(k^2)$ 
and $\prod_{n=1}^{k-1}(1+1/2n) = O(\sqrt{k})$ as $k$ goes to $\infty$, $B_k/\widetilde{b}_k = o(1/k^{3/2})$ as $k$ goes to $\infty$ and hence 
$\sum_{n \in A} B_n/\tilde{b}_n$ is absolutely convergent. Then by letting $z$ tend to $-\infty$ in (\ref{CebtyperepB}) we get 
$$
-N = \lim_{t \rightarrow -\infty} \left(Dt + E +  \sum_{n \in A} \frac{B_n}{\widetilde{b}_n} +  \sum_{n \in A} \frac{B_n}{t-\widetilde{b}_n}\right)
$$
and hence $D=0$ and $-N = E + \sum_{n \in A} B_n/\widetilde{b}_n$, i.e. $-H(z)$ has the representation
\begin{equation}
 -H(z) = N - \sum_{n \in A} \frac{B_n}{z-\widetilde{b}_n}.
\end{equation}
Noting that $H(b_k)=0$ and $Res(G,a_k) = Res(\widetilde{G},a_k)$, i.e. $H(a_k) = 1$ for all $k \in A$, we get 
\begin{equation*}
 1 = H(a_k) - H(b_k) = 
 -N + \sum_{n \in A} \frac{B_n}{a_k-\widetilde{b}_n} + N - \sum_{n \in A} \frac{B_n}{b_k-\widetilde{b}_n} =
 \sum_{n \in A} B_n \frac{(b_k - a_k)}{(a_k-\widetilde{b}_n)(b_k-\widetilde{b}_n)}
\end{equation*}
Each term of the infinite sum on the right end is positive, so by letting $k$ go to $\infty$ we get the following contradiction.
\begin{equation*}
 1 = \lim_{k \rightarrow \infty}\sum_{n \in A} B_n \frac{(b_k - a_k)}{(a_k-\widetilde{b}_n)(b_k-\widetilde{b}_n)} 
 = \sum_{n \in A} B_n \lim_{k \rightarrow \infty}\frac{(b_k - a_k)}{(a_k-\widetilde{b}_n)(b_k-\widetilde{b}_n)} = 0
\end{equation*}
Similar arguments give another contradiction, when $\widetilde{C} ~\textless ~C$, so $C = \widetilde{C}$.\\ 

\underline{Step 5}\\

Step 4 implies uniqueness of $dz + e$, i.e. uniqueness of $G(z)$ and hence uniqueness of $\{b_n\}_{n\in A}$. 
After unique recovery of the two spectra $\sigma_{DD} = \{a_n\}_{n \in \mathbb{N}}$ and $\sigma_{ND} = \{b_n\}_{n \in \mathbb{N}}$, the 
potential is uniquely determined by Borg's theorem.
\end{proof} 

\begin{remark}
 If we let $A = \mathbb{N}$, Theorem \ref{DDNDthm} gives Marchenko's theorem with Dirichlet-Dirichlet, Neumann-Dirichlet boundary conditions 
 as a corollary. By letting $A = \emptyset$, we get the statement of Borg's theorem with Dirichlet-Dirichlet, 
 Neumann-Dirichlet boundary conditions.
\end{remark}

\begin{remark}
 Spectral data of Theorem \ref{DDNDthm} can be seen as $\{a_n\}_{n \in \mathbb{N}}$, 
$\{b_n\}_{n \in \mathbb{N}\text{\textbackslash} A}$ and $\{\tau_{\alpha}(a_n)\}_{n \in A}$, where $\{\tau_{\alpha}(a_n)\}_{n \in A}$ is the 
set of norming constants for $\sigma_{DD} = \{a_n\}_{n \in \mathbb{N}}$.
\end{remark}

\subsection{\bf{Non-matching index sets}}
If the known point masses of the spectral measure and unknown eigenvalues of the Neumann-Dirichlet spectrum have different index sets, one 
needs some control over eigenvalues of the Dirichlet-Dirichlet spectrum corresponding to known point masses and unknown part of the 
Neumann-Dirichlet spectrum. In this case we get a {\u C}ebotarev type representation result. Before the statement, let us clarify the 
notations we use. For any subsequence $\displaystyle\{a_{k_n}\}_{n\in \mathbb{N}} \subset \sigma_{DD}$ and 
$\{b_{l_n}\}_{n\in \mathbb{N}} \subset \sigma_{ND}$, by $A_{k_n,m}$ and $A_{k_n}$ we denote the residues at $a_{k_n}$ of partial and 
infinite products, respectively, consisting of these subsequences: 
\begin{align*}
 A_{k_n,m} := Res(G_m,a_{k_n}) &= \frac{a_{k_n}}{b_{l_n}}(a_{k_n}-b_{l_n})\prod_{1\leq j\leq m, j\neq n}\frac{a_{k_j}}{b_{l_j}}\frac{a_{k_n}-b_{l_j}}{a_{k_n}-a_{k_j}},\\
 A_{k_n} := Res(G,a_{k_n}) &= \frac{a_{k_n}}{b_{l_n}}(a_{k_n}-b_{l_n})\prod_{j\in\mathbb{N},j\neq n}\frac{a_{k_j}}{b_{l_j}}\frac{a_{k_n}-b_{l_j}}{a_{k_n}-a_{k_j}},
\end{align*}
where 
\begin{equation*}
G_m(z) := \prod_{n = 1}^{m} \left(\frac{z}{b_{l_n}}-1\right)\left(\frac{z}{a_{k_n}}-1\right)^{-1}, \qquad 
G(z) := \prod_{n\in \mathbb{N}} \left(\frac{z}{b_{l_n}}-1\right)\left(\frac{z}{a_{k_n}}-1\right)^{-1}.
\end{equation*}

Note that these subsequences are ordered according to their indices, i.e. $a_{k_n} ~\textless ~a_{k_{n+1}}$ and 
$b_{l_n} ~\textless ~b_{l_{n+1}}$ for any $n \in \mathbb{N}$. This follows from the asymptotics of the spectra.
\begin{lemma}\label{lmm2} Let $\{a_{k_n}\}_{n\in \mathbb{N}} \subset \sigma_{DD}$ and 
$\{b_{l_n}\}_{n\in \mathbb{N}} \subset \sigma_{ND}$ satisfy following properties:
\begin{itemize}
 \item $\displaystyle \lim_{m \rightarrow \infty} ~\sum_{n=1}^{m} \Big(|A_{k_n,m} - A_{k_n}|/a_{k_n}^2\Big) ~\textless~ \infty$,
 \item $\{A_{k_n}/a_{k_n}^2\}_{n \in \mathbb{N}} \in l^1$.
\end{itemize}

Then 
\begin{equation}\label{Cebtyperep2}
  G(z) = cz^2 + dz + e + \sum_{n\in \mathbb{N}} A_{k_n}\left(\frac{1}{z-a_{k_n}}+\frac{1}{a_{k_n}}\right),
\end{equation}
where $c, d, e$ are real numbers, $A_{k_n}$ is the residue of $G(z)$ at the point $z=a_{k_n}$ and the sum converges 
normally on $\displaystyle \mathbb{C} \text{\textbackslash} \cup_{n \in \mathbb{N}} a_{k_n}$.
\end{lemma}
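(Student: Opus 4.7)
The plan is to recast each partial product $G_m(z)$ in a Čebotarev-style partial-fraction form and then pass to the limit, using the two summability hypotheses to control the entire remainder. Since $G_m(z)$ is a rational function of bidegree $(m,m)$ with simple poles at $\{a_{k_n}\}_{n=1}^m$ and residues $A_{k_n,m}$, the standard partial-fraction decomposition reads $G_m(z) = C_m + \sum_{n=1}^{m} A_{k_n,m}/(z-a_{k_n})$ with $C_m = \prod_{n=1}^m a_{k_n}/b_{l_n} = G_m(\infty)$. A direct computation shows $G_m(0)=1$, so after solving for $C_m$ one obtains the normalized identity
\begin{equation*}
G_m(z) = 1 + \sum_{n=1}^{m} A_{k_n,m}\left(\frac{1}{z-a_{k_n}} + \frac{1}{a_{k_n}}\right).
\end{equation*}
Since $G_m \to G$ normally on $\mathbb{C}\setminus\bigcup_n\{a_{k_n}\}$, this passes to the limit.

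Now I would split $A_{k_n,m} = A_{k_n} + (A_{k_n,m}-A_{k_n})$. Using the identity $\frac{1}{z-a_{k_n}}+\frac{1}{a_{k_n}} = \frac{z}{a_{k_n}(z-a_{k_n})}$, hypothesis (ii) makes $\sum_{n\in\mathbb{N}} A_{k_n}\bigl(\frac{1}{z-a_{k_n}}+\frac{1}{a_{k_n}}\bigr)$ converge normally on compact subsets of $\mathbb{C}\setminus\bigcup_n\{a_{k_n}\}$ (with bound $|z|/a_{k_n}^2$ per term on bounded sets). Consequently the function
\begin{equation*}
R(z) := G(z) - 1 - \sum_{n\in\mathbb{N}} A_{k_n}\left(\frac{1}{z-a_{k_n}}+\frac{1}{a_{k_n}}\right) = \lim_{m\to\infty}\sum_{n=1}^{m}(A_{k_n,m}-A_{k_n})\left(\frac{1}{z-a_{k_n}}+\frac{1}{a_{k_n}}\right)
\end{equation*}
is well defined, and the singular parts of $G$ and of the Mittag-Leffler-type sum cancel at every $a_{k_n}$, so $R$ is entire. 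The lemma thus reduces to showing that $R$ is a polynomial of degree at most two, with $c$, $d$, $e-1$ its coefficients.

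The main obstacle, and the technical heart of the proof, is the growth estimate on $R$. For $z$ on a circle of radius $\rho$ chosen to stay uniformly bounded away from the real poles $\{a_{k_n}\}$, I would split the sum defining $R$ according to how $|a_{k_n}|$ compares to $|z|$. In the low range $|a_{k_n}|\leq |z|/2$ one has $\bigl|\tfrac{z}{a_{k_n}(z-a_{k_n})}\bigr|\leq 2/a_{k_n}\leq |z|/a_{k_n}^2$, so the partial sums are bounded by $|z|$ times the quantity in hypothesis (i); in the high range $|a_{k_n}|\geq 2|z|$ the estimate $\bigl|\tfrac{z}{a_{k_n}(z-a_{k_n})}\bigr|\leq 2|z|/a_{k_n}^2$ again yields an $O(|z|)$ contribution; and in the transition range $|z|/2\leq |a_{k_n}|\leq 2|z|$ the crude bound $a_{k_n}^2\leq 4|z|^2$ applied to hypothesis (i) gives an $O(|z|^2)$ contribution. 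Together these show $|R(z)|=O(|z|^2)$ on a sequence of circles of radii going to infinity, and a standard Cauchy-estimate (or Phragmén-Lindelöf) argument then forces $R$ to be a polynomial of degree at most two, delivering the representation (\ref{Cebtyperep2}) with real constants $c$, $d$, $e$ and with the normal convergence of the resulting series already established.
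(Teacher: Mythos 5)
Your proof is correct and follows essentially the same route as the paper's: the same normalized partial-fraction identity $G_m(z) = 1 + \sum_{n=1}^m A_{k_n,m}\bigl(\frac{1}{z-a_{k_n}}+\frac{1}{a_{k_n}}\bigr)$, the same entire remainder $R(z) = G(z) - 1 - \sum_{n}A_{k_n}\bigl(\frac{1}{z-a_{k_n}}+\frac{1}{a_{k_n}}\bigr)$ controlled by the two summability hypotheses, and the same conclusion via an $O(|z|^2)$ bound on a sequence of expanding circles together with the maximum principle and Cauchy estimates. The only divergence is in how that bound is executed --- the paper takes circles of radius $b_{l_t}$ and bounds $\bigl(b_{l_t}|b_{l_t}-a_{k_n}|\bigr)^{-1}$ uniformly in $t$ by $\bigl(b_{l_1}|b_{l_1}-a_{k_n}|\bigr)^{-1}$ using the spectral asymptotics, whereas you keep generic circles uniformly separated from the poles and split the sum into the three ranges $a_{k_n}\leq |z|/2$, $a_{k_n}\geq 2|z|$, and the transition zone --- which is an equally valid, arguably more transparent, way of obtaining the identical estimate.
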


\begin{proof}
 Let $p(z)$ be the difference of $G(z)$ and the infinite sum on the right hand side of (\ref{Cebtyperep2}). Then, $p(z)$ is an entire function, since the infinite product and 
 the infinite sum share the same set of poles with equivalent degrees and residues. We represent $G_m(z)$ as partial sums:
 \begin{equation*}
  \prod_{n=1}^m \left(\frac{z}{b_{l_n}}-1\right)\left(\frac{z}{a_{k_n}}-1\right)^{-1} = 
  \sum_{n=1}^m A_{k_n,m}\left(\frac{1}{z-a_{k_n}}+\frac{1}{a_{k_n}}\right) + 1,
 \end{equation*}
where $A_{k_n,m} = Res(G_m,a_{k_n})$.

Let ${C_n}$ be the circle with radius $b_{l_n}$ centered at the origin. This sequence of circles satisfy following properties:
 
 \begin{itemize}
 \item $C_n$ omits all the poles $a_{k_n}$.\\
 
 \item Each $C_n$ lies inside $C_{n+1}$.\\
 
 \item The radius of $C_n$, $b_{l_n}$ diverges to infinity as $n$ goes to infinity.\\ 
\end{itemize}
Then,
\begin{align*}
\max_{z\in C_t}\left|\frac{p(z)-1}{b_{l_t}^2}\right| &= \max_{z\in C_t}\left|\frac{G(z)-1 - 
	\sum_{n\in \mathbb{N}} A_{k_n}\left(\frac{1}{z-a_{k_n}}+\frac{1}{a_{k_n}}\right)}{b_{l_t}^2}\right|\\
&= \frac{1}{b_{l_t}^2} \max_{z\in C_t}\lim_{m \to \infty} \left| \sum_{n=1}^m A_{k_n,m}\left(\frac{1}{z-a_{k_n}}+\frac{1}{a_{k_n}}\right) - 
\sum_{n=1}^m A_{k_n}\left(\frac{1}{z-a_{k_n}}+\frac{1}{a_{k_n}}\right)\right|\\
&= \lim_{m \to \infty} \frac{1}{b_{l_t}^2}\max_{z\in C_t}\left| \sum_{n=1}^m (A_{k_n,m}-A_{k_n})\frac{z}{a_{k_n}(z-a_{k_n})}\right|\\
&\leq \lim_{m \to \infty} \frac{1}{b_{l_t}^2}\sum_{n=1}^m |A_{k_n,m}-A_{k_n}|\frac{b_{l_t}}{a_{k_n}|b_{l_t}-a_{k_n}|}\\
&= \lim_{m \to \infty} \sum_{n=1}^m |A_{k_n,m}-A_{k_n}|\frac{1}{a_{k_n}b_{l_t}|b_{l_t}-a_{k_n}|}\\
&\leq \lim_{m \to \infty} \sum_{n=1}^m |A_{k_n,m}-A_{k_n}|\frac{1}{a_{k_n}b_{l_1}|b_{l_1}-a_{k_n}|}\\
&\leq \lim_{m \to \infty} C'\sum_{n=1}^m \frac{|A_{k_n,m}-A_{k_n}|}{a_{k_n}^2} ~\textless~ \infty.
\end{align*}
Note that the second inequality is a consequence of 
$$
\sup_{t \in \mathbb{N}} \Big(b_{l_t}|b_{l_t}-a_{k_n}|\Big)^{-1} \leq \Big(b_{l_1}|b_{l_1}-a_{k_n}|\Big)^{-1},
$$
which follows from asymptotics of $\{a_{n}\}_{n \in \mathbb{N}}$ and $\{b_{n}\}_{n \in \mathbb{N}}$. 
Therefore $|p(z)-1|\leq C''|z|^2$ on the circle $C_t$ for any $t \in \mathbb{N}$, where $C'$ and $C''$ are real numbers. 
By the maximum modulus theorem and the entireness of $p(z)$, we conclude that $p(z)$ is a polynomial of at most second degree. Since 
$G(0), G'(0)$ and $G''(0)$ are real numbers, $c,d,e \in \mathbb{R}$. 

\end{proof}

Using this {\u C}ebotarev type representation we prove our main result in non-matching index sets case with Dirichlet-Dirichlet, 
Neumann-Dirichlet boundary conditions. However, we need extra information of an eigenvalue from $\{b_{l_n}\}_{n\in \mathbb{N}}$.

\begin{theorem}\label{DDNDindex1}
 Let $q \in L^1(0,\pi)$, and $\displaystyle\{a_{k_n}\}_{n\in \mathbb{N}} \subset \sigma_{DD}$, 
$\displaystyle \{b_{l_n}\}_{n\in \mathbb{N}} \subset \sigma_{ND}$ satisfy following properties:
\begin{itemize}
 \item $\displaystyle \lim_{m \rightarrow \infty} ~\sum_{n=1}^{m} \Big(|A_{k_n,m} - A_{k_n}|/a_{k_n}^2\Big) ~\textless~ \infty$,
 \item $\{A_{k_n}/a_{k_n}^2\}_{n \in \mathbb{N}} \in l^1$.
\end{itemize}
Then $\{a_n\}_{n \in \mathbb{N}}$, 
$\{b_n\}_{n \in \mathbb{N}}\text{\textbackslash}\{b_{l_n}\}_{n\in \mathbb{N}\text{\textbackslash}\{s\}}$ and 
$\{\gamma_{k_n}\}_{n \in \mathbb{N}}$ determine the potential $q$ for any $s \in \mathbb{N}$, 
where $\sigma_{DD} = \{a_n\}_{n \in \mathbb{N}}$, $\sigma_{ND} = \{b_n\}_{n \in \mathbb{N}}$ are Dirichlet-Dirichlet and 
Neumann-Dirichlet spectra and $\{\gamma_n\}_{n \in \mathbb{N}}$ are 
point masses of the spectral measure $ \mu_{0,0} = \sum_{n \in \mathbb{N}}\gamma_n \delta_{a_n}$. 
\end{theorem}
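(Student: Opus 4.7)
The plan is to follow the five-step scheme of the proof of Theorem \ref{DDNDthm}, with the {\u C}ebotarev representation replaced by Lemma \ref{lmm2} (whose polynomial part has degree two rather than one) and with the one extra known eigenvalue $b_{l_s}$ compensating for the additional free coefficient this introduces.

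In the first step I would factor $m(z) = F(z) G(z)$ with
\[
G(z) := -C \prod_{n\in \mathbb{N}}\Big(\frac{z}{b_{l_n}}-1\Big)\Big(\frac{z}{a_{k_n}}-1\Big)^{-1}
\]
and $F(z)$ the complementary infinite product, containing the zeros $b_n$ with $n \notin I := \{l_n : n\in \mathbb{N}\}$ and the poles $a_n$ with $n \notin J := \{k_n : n\in \mathbb{N}\}$. The data of the theorem determines $F$ completely and, via $\mathrm{Res}(G,a_{k_n}) = \gamma_{k_n}/F(a_{k_n})$, all residues of $G$; the known ND eigenvalue $b_{l_s}$ provides the extra constraint $G(b_{l_s}) = 0$. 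The two hypotheses of the statement are exactly those required by Lemma \ref{lmm2}, so
\[
G(z) = c z^2 + d z + e + S(z), \qquad S(z) := \sum_{n\in \mathbb{N}} A_{k_n}\Big(\frac{1}{z-a_{k_n}}+\frac{1}{a_{k_n}}\Big),
\]
and the task reduces to uniquely determining the three real coefficients $c,d,e$.

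In the second step I would let $\tilde q$ be another $L^1$-potential yielding the same data, with corresponding product $\tilde G(z) = \tilde c z^2 + \tilde d z + \tilde e + S(z)$ (the series is identical since the residues agree). Then
\[
\Delta(z) := G(z) - \tilde G(z) = (c-\tilde c)z^2 + (d-\tilde d)z + (e-\tilde e)
\]
is a polynomial of degree at most two, and $\Delta(b_{l_s}) = 0$ because $b_{l_s}$ belongs to the data. Evaluating $\Delta$ at the remaining zeros $b_{l_k}$ and $\tilde b_{l_k}$, $k\neq s$, and using $G(b_{l_k}) = 0 = \tilde G(\tilde b_{l_k})$ yields
\[
\Delta(b_{l_k}) = -\tilde G(b_{l_k}), \qquad \Delta(\tilde b_{l_k}) = G(\tilde b_{l_k}).
\]

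In the third step I would pass to the limit $k\to\infty$. Equality of the DD spectra forces $\int_0^\pi q = \int_0^\pi \tilde q$, and (\ref{asy10}) then gives $\tilde b_{l_k} - b_{l_k} = o(1)$. Dividing the identities above by $b_{l_k}^2$ and using the two summability hypotheses to control $S(\tilde b_{l_k}) - S(b_{l_k})$ (in the spirit of (\ref{eigenvaluesBound}) and the limit argument (\ref{equ4})--(\ref{equ5}) of the proof of Theorem \ref{DDNDthm}), the top-order balance first forces $c = \tilde c$. A refinement of the same argument, dividing by $b_{l_k}$ rather than $b_{l_k}^2$, next forces $d = \tilde d$. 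The remaining equation $\Delta(b_{l_s}) = 0$ then yields $e = \tilde e$. Consequently $G \equiv \tilde G$, the zero set $\{b_{l_n}\}_{n\in \mathbb{N}}$ is determined, the full Neumann-Dirichlet spectrum is recovered, and Borg's theorem (Theorem \ref{Borg}) determines $q$.

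The principal obstacle is the quadratic coefficient. In Theorem \ref{DDNDthm} the {\u C}ebotarev representation is linear and the constant term is eliminated by a Herglotz-positivity argument that relies on the interlacing of $\{b_n\}_{n\in A}$ and $\{\tilde b_n\}_{n\in A}$. Here $G$ need not be Herglotz and the sequences $\{a_{k_n}\}, \{b_{l_n}\}$ need not interlace, so the extra coefficient $c - \tilde c$ cannot be killed by a sign/positivity argument and must instead be eliminated by the delicate top-order asymptotic comparison sketched above; showing that $S(\tilde b_{l_k}) - S(b_{l_k})$ is $o(b_{l_k}^2)$ after using only the residue-summability hypotheses is the technical heart of the proof, and it is precisely for this estimate that the quantitative conditions on $\{A_{k_n}\}$ in the statement are indispensable.
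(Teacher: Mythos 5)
Your proposal is correct and follows essentially the same route as the paper's proof: the factorization $m=FG$ with $G$ built from $\{a_{k_n}\}$, $\{b_{l_n}\}$, the representation of $G$ and $\widetilde{G}$ via Lemma \ref{lmm2} with identical singular series, the elimination of $c$ and $d$ by evaluating at the zeros $b_{l_k}$, $\widetilde{b}_{l_k}$ and letting $k\to\infty$ (using $\widetilde{b}_{l_k}-b_{l_k}=o(1)$, which follows from the matching Dirichlet spectra forcing $\int_0^{\pi}q=\int_0^{\pi}\widetilde{q}$, together with the uniform bound in the spirit of (\ref{eigenvaluesBound})), the shared zero $b_{l_s}$ fixing the remaining constant, and Borg's theorem to conclude. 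Your successive divisions of $\Delta(\widetilde{b}_{l_k})=G(\widetilde{b}_{l_k})$ by $b_{l_k}^2$ and then $b_{l_k}$ are only a cosmetic rearrangement of the paper's single division by $\widetilde{b}_{l_m}(\widetilde{b}_{l_m}-b_{l_m})$ (your variant even avoids the paper's restriction to indices with $b_{l_m}\neq\widetilde{b}_{l_m}$), so the underlying argument is the same.
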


 \begin{proof} 
  By representation of the $m$-function as the Herglotz integral of the spectral measure, from $\gamma_n$, we know
  $Res(m,a_n)$. Therefore, in terms of the $m$-function our claim says that the set of poles, $\{a_n\}_{n \in \mathbb{N}}$, the set of zeros 
 except the index set $\{l_n\}_{n \in \mathbb{N}\text{\textbackslash}\{s\}}$, 
 $\{b_{l_s}\} \cup \{b_n\}_{n \in \mathbb{N}\text{\textbackslash} \{l_n\}_{n \in \mathbb{N}}}$, 
 and the residues with the index set $\{k_n\}_{n \in \mathbb{N}}$, $\{Res(m,a_{k_n})\}_{n \in \mathbb{N}}$ determine the 
 $m$-function uniquely.
 
 From Lemma \ref{lmm1}, the Weyl $m$-function can be represented in terms of $\sigma_{DD}$ and $\sigma_{ND}$,
\begin{equation*}
m(z) = -C \prod_{n\in \mathbb{N}}\left(\frac{z}{b_{n}}-1\right)\left(\frac{z}{a_n}-1\right)^{-1}.
\end{equation*}
Note that for any $n\in \mathbb{N}$, we know
\begin{equation}\label{Con1}
Res(m,a_{k_n}) = C(b_{k_n} - a_{k_n})\frac{a_{k_n}}{b_{k_n}} 
\prod_{j\in \mathbb{N},j \neq k_n}\left(\frac{a_{k_n}}{b_j}-1\right)\left(\frac{a_{k_n}}{a_j}-1\right)^{-1}.
\end{equation}
Let $m(z) = F(z)G(z)$, where $F$ and $G$ are two infinite products defined as
\begin{align*}
 G(z) &:= -C \prod_{n\in \mathbb{N}}\left(\frac{z}{b_{l_n}}-1\right)\left(\frac{z}{a_{k_n}}-1\right)^{-1},\\
 F(z) &:= \prod_{n\in\mathbb{N} \text{\textbackslash} \{l_n\}_{n \in \mathbb{N}}} \left(\frac{z}{b_{n}}-1\right)
\prod_{n\in\mathbb{N} \text{\textbackslash} \{k_n\}_{n \in \mathbb{N}}}\left(\frac{z}{a_n}-1\right)^{-1}
\end{align*}
Also note that at any point of $\{a_{k_n}\}_{n \in \mathbb{N}}$, the infinite product
\begin{equation}\label{Con2}
 F(z)=\prod_{n\in\mathbb{N} \text{\textbackslash} \{l_n\}_{n \in \mathbb{N}}} \left(\frac{z}{b_{n}}-1\right)
\prod_{n\in\mathbb{N} \text{\textbackslash} \{k_n\}_{n \in \mathbb{N}}}\left(\frac{z}{a_n}-1\right)^{-1}
\end{equation}
is known. Conditions (\ref{Con1}) and (\ref{Con2}) imply that for any $n\in \mathbb{N}$, we know
\begin{equation*}
Res(G,a_{k_n}) = \frac{Res(m,a_{k_n})}{F(a_{k_n})}.
\end{equation*}

By Lemma \ref{lmm2}, $G(z)$ has the following representation

\begin{equation}\label{Cebtyperep3}
 G(z) = cz^2 + dz + e + \sum_{n\in \mathbb{N}} A_{k_n}\left(\frac{1}{z-a_{k_n}}+\frac{1}{a_{k_n}}\right),
\end{equation}
where $A_{k_n} = Res(G,a_{k_n})$.
 In order to show uniqueness of $G(z)$, let us consider $\widetilde{G}(z)$ similar to the proof of Theorem \ref{DDNDthm}, i.e. 
 $\widetilde{G}(z)$ has the following properties. 
 \begin{itemize}
 \item The infinite product $\widetilde{G}(z)$ is defined as
 \begin{equation*}
  \widetilde{G}(z) := -\widetilde{C} \prod_{n\in \mathbb{N}}\left(\frac{z}{\widetilde{b}_{l_n}}-1\right)\left(\frac{z}{\widetilde{a}_{k_n}}-1\right)^{-1},
 \end{equation*}
where $\widetilde{C} ~\textgreater~ 0$, the set of poles $\{\widetilde{a}_{k_n}\}_{n \in \mathbb{N}}$ satisfies asymptotics (\ref{asy00}) 
and the set of zeros $\{\widetilde{b}_{l_n}\}_{n \in \mathbb{N}}$ satisfies asymptotics (\ref{asy10}). For the given eigenvalues from 
$\sigma_{ND} = \{b_n\}_{n \in \mathbb{N}}$, let $\tilde{b}_n$ be defined as $b_n$, i.e. $\tilde{b}_j := b_j$ for all 
$j \in \mathbb{N}\text{\textbackslash}\{l_n\}_{n \in \mathbb{N}}$. \\
 \item $G(z)$ and $\widetilde{G}(z)$ share same set of poles with equivalent residues at the corresponding poles, i.e. $\widetilde{a}_{k_n} = a_{k_n}$ 
 and $Res(\widetilde{G},a_{k_n}) =  Res(G,a_{k_n})$ for any $n \in \mathbb{N}$.\\
 \item $G(z)$ and $\widetilde{G}(z)$ share one zero, namely $b_{l_s} = \tilde{b}_{l_s}$.\\
 \item By the equivalence of poles and residues of $G(z)$ and $\widetilde{G}(z)$ and Lemma \ref{lmm2}, $\widetilde{G}(z)$ has the 
 representation
 \begin{equation}\label{CebTypeRep3}
  \widetilde{G}(z) = \widetilde{c}z^2 + \widetilde{d}z + \widetilde{e} + \sum_{n \in \mathbb{N}}A_{k_n}\left(\frac{1}{a_{k_n}-z}-\frac{1}{a_{k_n}}\right),
 \end{equation}
where $\widetilde{c}$, $\widetilde{d}$, $\widetilde{e} \in \mathbb{R}$.
\end{itemize}

Let $m \in \mathbb{N}\text{\textbackslash}\{s\}$ and $b_{l_m} \neq \widetilde{b}_{l_m}$. Since $G(b_{l_m})=0$ and $\widetilde{G}(\widetilde{b}_{l_m})=0$, 
using representations (\ref{Cebtyperep3}) and 
(\ref{CebTypeRep3}) we get
\begin{align}
 -cb_{l_m}^2 -db_{l_m}-e &= \sum_{n \in \mathbb{N}} A_{k_n}\left(\frac{1}{a_{k_n}-b_{l_m}} - \frac{1}{a_{k_n}}\right), \label{eq1} \\
 -\tilde{c}\tilde{b}_{l_m}^2 - \tilde{d}\tilde{b}_{l_m}-\widetilde{e} &= \sum_{n \in \mathbb{N}} A_{k_n}\left(\frac{1}{a_{k_n}-\widetilde{b}_{l_m}} - \frac{1}{a_{k_n}}\right) \text{ and} \label{eq2} \\
 G(\tilde{b}_{l_m}) = G(\tilde{b}_{l_m}) - \widetilde{G}(\tilde{b}_{l_m}) &= 
 (c-\tilde{c})\tilde{b}_{l_m}^2 + (d-\tilde{d})\tilde{b}_{l_m} + e-\tilde{e} \label{eq3}
\end{align}
Taking difference of (\ref{eq1}) and (\ref{eq2}) and replacing $e-\widetilde{e}$ by 
$G(\tilde{b}_{l_m}) - (c-\tilde{c})\tilde{b}_{l_m}^2 - (d-\widetilde{d})\widetilde{b}_{l_m}$ we get
\begin{equation*}
 cb_{l_m}^2 - c\tilde{b}_{l_m}^2 + db_{l_m} - d\tilde{b}_{l_m} + G(\tilde{b}_{l_m}) = 
 \sum_{n \in \mathbb{N}} A_{k_n}\left(\frac{\widetilde{b}_{l_m} - b_{l_m}}{(a_{k_n} - \widetilde{b}_{l_m})(a_{k_n} - b_{l_m})}\right)
\end{equation*}

Dividing both sides by $\widetilde{b}_{l_m}(\widetilde{b}_{l_m} - b_{l_m})$ we get 
\begin{equation}\label{eq4}
 \frac{-c(b_{l_m}+\tilde{b}_{l_m})}{\widetilde{b}_{l_m}} + \frac{-d}{\widetilde{b}_{l_m}} + \frac{G(\tilde{b}_{l_m})}{\widetilde{b}_{l_m}(\widetilde{b}_{l_m} - b_{l_m})} = 
 \sum_{n \in \mathbb{N}} \left(\frac{A_{k_n}}{\widetilde{b}_{l_m}(a_{k_n} - \widetilde{b}_{l_m})(a_{k_n} - b_{l_m})}\right)
\end{equation}

Note that since $\{a_n\}_{n \in \mathbb{N}}$ satisfies asymptotics (\ref{asy00}) and $\{b_n\}_{n \in \mathbb{N}}$, $\{\widetilde{b}_n\}_{n \in \mathbb{N}}$ 
satisfy asymptotics (\ref{asy10}), the inequalities
\begin{equation*}
 |\widetilde{b}_{l_m}(a_{k_n} - b_{l_m})(a_{k_n} - \widetilde{b}_{l_m})|^{-1} \leq 
 |\widetilde{b}_{k_n}(a_{k_n} - b_{k_n})(a_{k_n} - \widetilde{b}_{k_n})|^{-1} \leq 2/a_{k_n}^2
\end{equation*}
are valid for any $m \in \mathbb{N}\text{\textbackslash}\{s\}$ and for sufficiently large $n \in \mathbb{N}$. Recall that 
$\widetilde{b}_{k_j} := b_{k_j}$ if ${k_j} \notin \{l_n\}_{n \in \mathbb{N}}$. 
In addition, $\sum_{n \in \mathbb{N}}A_{k_n}/a_{k_n}^2$ is 
absolutely convergent. Therefore right hand side of (\ref{eq4}) converges to $0$ 
as $m$ goes to $\infty$. Also note that by (\ref{eq3}), left hand side of (\ref{eq4}) is
\begin{align*}\label{eq5}
 \frac{-c(b_{l_m}+\tilde{b}_{l_m})-d}{\widetilde{b}_{l_m}} &+ \frac{G(\tilde{b}_{l_m})}{\widetilde{b}_{l_m}(\widetilde{b}_{l_m} - b_{l_m})} =
 \frac{-c(b_{l_m}+\tilde{b}_{l_m})-d}{\widetilde{b}_{l_m}} + 
 \frac{G(\tilde{b}_{l_m}) - \widetilde{G}(\tilde{b}_{l_m})}{\widetilde{b}_{l_m}(\widetilde{b}_{l_m} - b_{l_m})}\\
 &= \frac{-c(b_{l_m}+\tilde{b}_{l_m})-d}{\widetilde{b}_{l_m}} + 
 \frac{1}{\tilde{b}_{l_m} - b_{l_m}}\left[(c-\tilde{c})\tilde{b}_{l_m} + d-\tilde{d}+\frac{e-\tilde{e}}{\tilde{b}_{l_m}}\right].
\end{align*}
Let us observe that 
\begin{equation*}
\lim_{m \rightarrow \infty} \frac{-c(b_{l_m}+\tilde{b}_{l_m})-d}{\widetilde{b}_{l_m}} = -2c.
\end{equation*}
Now let us show $\tilde{b}_{l_m} - b_{l_m}$ converges to $0$ as $m$ goes to $\infty$. Recall that poles of $G$ and $\widetilde{G}$ 
satisfy asymptotics
\begin{equation*}
  k_n^2 + \frac{1}{\pi}\int_0^{\pi}q(x)dx + \alpha_{k_n} \qquad \textit{and} \qquad
  k_n^2 + \frac{1}{\pi}\int_0^{\pi}\widetilde{q}(x)dx + \widetilde{\alpha}_{k_n}
\end{equation*}
respectively, where $\alpha_n = o(1)$ and $\widetilde{\alpha}_n = o(1)$ as $n \to \infty$. Equivalance of poles of $G$ and $\widetilde{G}$ 
imply equivalence of $\int_0^{\pi}q(x)dx$ and $\int_0^{\pi}\widetilde{q}(x)dx$. Therefore $b_{l_m}$ and $\tilde{b}_{l_m}$ satisfy asymptotics
\begin{equation*}
 \left(l_m-\frac{1}{2}\right)^2 + \frac{1}{\pi}\int_0^{\pi}q(x)dx + \beta_{l_m} \qquad \textit{and} \qquad
 \left(l_m-\frac{1}{2}\right)^2 + \frac{1}{\pi}\int_0^{\pi}q(x)dx + \widetilde{\beta}_{l_m},
\end{equation*}
where $\beta_m = o(1)$ and $\widetilde{\beta}_m = o(1)$ as $m \to \infty$. Hence $\tilde{b}_{l_m} - b_{l_m} = o(1)$ as $m$ goes to $\infty$. 
Therefore left hand side of (\ref{eq4}) goes to $\infty$ if $c-\tilde{c} \neq 0$ or $d-\tilde{d} \neq 0$, so we get a 
contradiction unless $c = \tilde{c}$ and $d = \tilde{d}$. This implies that $G(z) - \widetilde{G}(z)$ is a real constant. However, $G(z)$ and $\widetilde{G}(z)$ 
share the zero $b_{l_s}$. This implies uniqueness of $G(z)$ and hence uniqueness of $\{b_{l_n}\}_{n\in \mathbb{N}}$. 
After unique recovery of the two spectra $\sigma_{DD}$ and $\sigma_{ND}$, the potential is uniquely determined by Borg's theorem.  
\end{proof}

We also get the uniqueness result without knowing any point from $\{b_{l_n}\}_{n\in \mathbb{N}}$, but this requires 
absolute convergence of $\prod_{n\in \mathbb{N}}a_{k_{n}}/b_{l_n}$. By absolute convergence of 
$\prod_{n\in \mathbb{N}}a_{k_{n}}/b_{l_n}$ we mean absolute convergence of 
$\sum_{n\in \mathbb{N}}(a_{k_{n}}/b_{l_n} - 1)$. Note that Limit Comparison Test implies that 
$\prod_{n\in \mathbb{N}}a_{k_{n}}/b_{l_n}$ is absolutely convergent if and only if 
$\prod_{n\in \mathbb{N}}b_{l_n}/a_{k_{n}}$ is absolutely convergent. Absolute convergence of 
$\prod_{n\in \mathbb{N}}a_{k_{n}}/b_{l_n}$ also implies the two conditions in Lemma \ref{lmm2}, so in this case 
Lemma \ref{lmm2} can be written in the following form.

\begin{lemma}\label{lmm3} Let $\displaystyle\{a_{k_n}\}_{n\in \mathbb{N}} \subset \sigma_{DD}$ and 
$\displaystyle \{b_{l_n}\}_{n\in \mathbb{N}} \subset \sigma_{ND}$ such that 
$\prod_{n\in \mathbb{N}}(a_{k_{n}}/b_{l_n})$ is absolutely convergent. Then 
\begin{equation*}
  G(z) = cz^2 + dz + e + \sum_{n\in \mathbb{N}} A_{k_n}\left(\frac{1}{z-a_{k_n}}+\frac{1}{a_{k_n}}\right),
\end{equation*}
where $c, d, e$ are real numbers, $A_{k_n}$ is the residue of $G(z)$ at the point $z=a_{k_n}$ and the sum converges 
normally on $\displaystyle \mathbb{C} \text{\textbackslash} \cup_{n \in \mathbb{N}} a_{k_n}$.
\end{lemma}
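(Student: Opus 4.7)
The plan is to deduce Lemma \ref{lmm3} directly from Lemma \ref{lmm2} by verifying that absolute convergence of $\prod_{n \in \mathbb{N}} a_{k_n}/b_{l_n}$ implies the two hypotheses of Lemma \ref{lmm2}. Once this is done, the $\mathbb{R}$-coefficients representation follows at once. Set $\epsilon_n := a_{k_n}/b_{l_n} - 1$; absolute convergence of the product is equivalent to $\sum_n |\epsilon_n| < \infty$, which in turn implies that the partial products $\prod_{1 \leq j \leq m,\, j \neq n} a_{k_j}/b_{l_j}$ and the infinite product $\prod_{j \neq n} a_{k_j}/b_{l_j}$ are bounded uniformly in $n$ and $m$.

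For the first hypothesis, using $a_{k_n}-b_{l_n} = b_{l_n}\epsilon_n$ I would factor
\[
\frac{A_{k_n}}{a_{k_n}^2} \;=\; \frac{\epsilon_n}{a_{k_n}}\cdot \prod_{j \neq n}\frac{a_{k_j}}{b_{l_j}} \cdot \prod_{j \neq n}\frac{a_{k_n}-b_{l_j}}{a_{k_n}-a_{k_j}}.
\]
The middle infinite product is uniformly bounded. For the rightmost one, use
\[
\frac{a_{k_n}-b_{l_j}}{a_{k_n}-a_{k_j}} \;=\; 1 + \frac{b_{l_j}\,\epsilon_j}{a_{k_n}-a_{k_j}}
\]
and estimate the logarithm of the product by $\sum_{j \neq n} b_{l_j}|\epsilon_j|/|a_{k_n}-a_{k_j}|$. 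Combining $\sum_j |\epsilon_j| < \infty$ with the spectral asymptotics $a_{k_n} \sim k_n^2$, $b_{l_n} \sim (l_n - \tfrac12)^2$ and the fact that absolute convergence forces $k_n/l_n \to 1$, we obtain a uniform-in-$n$ bound on this product. This yields $|A_{k_n}/a_{k_n}^2| \lesssim |\epsilon_n|/a_{k_n}$, which is summable.

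For the second hypothesis, write $A_{k_n,m} - A_{k_n} = A_{k_n}\bigl(R_{n,m}-1\bigr)$ with
\[
R_{n,m} \;:=\; \prod_{j > m}\frac{b_{l_j}}{a_{k_j}}\cdot\frac{a_{k_n}-a_{k_j}}{a_{k_n}-b_{l_j}}.
\]
An estimate parallel to the one above shows that $|R_{n,m}-1|$ is dominated by tails $\sum_{j > m}|\epsilon_j|$, which tend to zero. Swapping the order of summation converts $\sum_{n=1}^m |A_{k_n,m}-A_{k_n}|/a_{k_n}^2$ into an absolutely convergent double sum whose $\limsup_{m \to \infty}$ is finite.

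The main obstacle is the uniform-in-$n$ bound on $P_n := \prod_{j \neq n}(a_{k_n}-b_{l_j})/(a_{k_n}-a_{k_j})$: the ratio $b_{l_j}/(a_{k_n}-a_{k_j})$ can become large when $j$ is close to $n$, so one must split the product into the regime of $j$ near $n$ (handled by a direct Taylor estimate exploiting the smallness of $\epsilon_j$ for large indices) and the regime of $j$ far from $n$ (handled by the quadratic growth of the spectrum together with $\sum |\epsilon_j| < \infty$). Once this uniform bound is secured, the rest of the argument is bookkeeping.
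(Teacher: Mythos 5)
Your overall strategy --- deducing Lemma \ref{lmm3} from Lemma \ref{lmm2} by showing that absolute convergence of $\prod_{n}(a_{k_n}/b_{l_n})$ implies its two hypotheses --- is exactly the paper's route. But the estimates you sketch contain a genuine gap: the uniform-in-$n$ bound on $P_n:=\prod_{j\neq n}(a_{k_n}-b_{l_j})/(a_{k_n}-a_{k_j})$, on which your whole argument hinges, is false. The deviation of each factor from $1$ is $b_{l_j}\epsilon_j/(a_{k_n}-a_{k_j})$, and the numerator $b_{l_j}|\epsilon_j|=|a_{k_j}-b_{l_j}|$ is \emph{not} small: since $b_{l_j}\sim(l_j-\tfrac12)^2$, the smallness of $\epsilon_j$ is offset by quadratic growth, and generically $|a_{k_j}-b_{l_j}|\asymp k_j$. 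For instance $l_j=k_j$ (which the lemma permits) gives $|a_{k_j}-b_{k_j}|\approx k_j$ while $|\epsilon_j|\approx 1/k_j$; so your ``direct Taylor estimate exploiting the smallness of $\epsilon_j$'' in the near-diagonal regime never gets off the ground. Concretely, for the free potential take $\{k_n\}$ to consist of blocks of consecutive integers $K-r$, $r=0,1,\dots,\lfloor\sqrt{K}\rfloor$, with $K=4^m$, and $l_j=k_j$ throughout; then $\sum 1/k_n<\infty$ and $\sum|\epsilon_n|<\infty$, so the hypothesis of Lemma \ref{lmm3} holds, yet for $n$ at the top of a block the in-block factors are $\approx 1+\tfrac{1}{2r}$ and $P_n\gtrsim k_n^{1/4}\to\infty$. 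The same phenomenon defeats your claim that $|R_{n,m}-1|$ is dominated by the tail $\sum_{j>m}|\epsilon_j|$ when $n$ is close to $m$: the first factors past $m$ can deviate from $1$ by an amount of order $1$. Your final conclusion would in fact survive polynomial losses (a bound $P_n=O(k_n^{1/2+o(1)})$ is absorbed by the factor $|\epsilon_n|/a_{k_n}$), but that is not the argument you give, and establishing such index-dependent bounds is where all the real work would lie.

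The paper avoids pointwise bounds on these products altogether, and you should compare your plan with its mechanism. First it extracts from the spectral asymptotics the quantitative consequence your sketch omits: absolute convergence forces $\{1/l_n\},\{1/k_n\}\in l^1$, whence $\{1/(a_{k_n}-b_{l_n})\}\in l^1$. Next, instead of estimating individual residues, it evaluates the exact partial-fraction identity $G_N(z)=\sum_{n=1}^{N}A_{k_n,N}/(z-a_{k_n})+\prod_{n=1}^{N}a_{k_n}/b_{l_n}$ at $z=0$, so that $\lim_{N\to\infty}\sum_{n=1}^{N}A_{k_n,N}/a_{k_n}$ is controlled by the value of the absolutely convergent product --- no uniform bound on any $P_n$ is ever needed. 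Finally it factors $(A_{k_n,N}-A_{k_n})/a_{k_n}^2$ as the product of $A_{k_n,N}/a_{k_n}$, of $\frac{a_{k_n}-b_{l_n}}{a_{k_n}}\bigl[1-P_{k_n,N}\bigr]$, and of the $l^1$ weight $1/(a_{k_n}-b_{l_n})$, so the dangerous tail product $P_{k_n,N}$ only ever appears multiplied by a summable factor; the second hypothesis of Lemma \ref{lmm2} then follows by the triangle inequality. To salvage your proposal, either adopt this factorization or prove the weaker bound $P_n=O(a_{k_n}^{\delta})$ for some $\delta<1/2$ and redo the bookkeeping accordingly.
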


\begin{proof}
 We will show that absolute convergence of $\prod_{n\in \mathbb{N}}(a_{k_n}/b_{l_n})$ implies the two conditions 
 in Lemma \ref{lmm2}, but first we begin by showing that absolute convergence of $\prod_{n\in \mathbb{N}}(a_{k_n}/b_{l_n})$ implies 
 $\{1/(a_{k_{n}} - b_{l_n})\}_{n \in \mathbb{N}} \in l^1$. Since $\prod_{n\in \mathbb{N}}(a_{k_n}/b_{l_n})$ is absolutely convergent, 
 \begin{equation*}
  \sum_{n\in \mathbb{N}}\left|\frac{a_{k_{n}} - b_{l_n}}{b_{l_n}}\right| = 
  \sum_{n\in \mathbb{N}}\left|\frac{k_n^2 - (l_n - 1/2)^2 + \alpha_{k_n} - \beta_{l_n}}
  {(l_n - 1/2)^2 + (1/\pi)\int_0^{\pi}q(x)dx + \beta_{l_n}}\right| ~\textless~ \infty,
 \end{equation*}
 i.e. $\{(k_n^2 - l_n^2 + l_n)/l_n^2\}_{n \in \mathbb{N}} \in l^1$. Note that $\lim_{n \rightarrow \infty} a_{k_n}/b_{l_n} = 1$ 
 implies $\lim_{n \rightarrow \infty} k_n/l_n = 1$. Therefore 
 \begin{align*}
  \infty &~\textgreater~ \sum_{n\in \mathbb{N}}\left|\frac{k_n^2 - l_n^2 + l_n}{l_n^2}\right|\\ 
  &= \sum_{n\in \mathbb{N}}\frac{k_n + l_n}{l_n}\left|\frac{k_n - l_n + l_n/(k_n + l_n)}{l_n}\right|\\
  &\geq \sum_{n=1}^{N}\left|\frac{k_n - l_n + l_n/(k_n + l_n)}{l_n}\right|  + 
  \sum_{n=N+1}^{\infty}\left|\frac{1/4}{l_n}\right|\\
  &\geq c_1 \sum_{n\in \mathbb{N}}\frac{1}{l_n}
 \end{align*}
where $N \in \mathbb{N}, c_1 ~\textgreater~ 0$, i.e. $\{1/l_n\}_{n \in \mathbb{N}} \in l^1$ and by Limit Comparison Test 
$\{1/k_n\}_{n \in \mathbb{N}} \in l^1$. Therefore $\{1/(a_{k_{n}} - b_{l_n})\}_{n \in \mathbb{N}} \in l^1$, since 
$1/|a_{k_{n}} - b_{l_n}| \leq 1/|a_{k_{n}} - b_{k_n}| = O(1/k_n)$ as $n$ goes to $\infty$.

The partial product $G_N$ defined in the beginning of Section 4.2 can be represented as 
\begin{equation*}
 G_N(z) = \sum_{n = 1}^{N}\frac{A_{k_n,N}}{z - a_{k_n}} + \prod_{n = 1}^{N}\frac{a_{k_n}}{b_{l_n}},
\end{equation*}
and hence
\begin{equation}\label{Gnlimit}
 \lim_{N \rightarrow \infty} \sum_{n = 1}^{N}\frac{A_{k_n,N}}{a_{k_n}} = 
 \lim_{N \rightarrow \infty} \Big[\prod_{n = 1}^{N}\frac{a_{k_n}}{b_{l_n}} - G_N(0)\Big] = 
 \prod_{n \in \mathbb{N}}\frac{a_{k_n}}{b_{l_n}} - 1 \in \mathbb{R}.
\end{equation}
Since $\{1/a_{k_n}\}_{n \in \mathbb{N}} \in l^1$, existence of this limit implies 
$\lim_{N \rightarrow \infty}\sum_{n = 1}^{N}|A_{k_n,N}/a_{k_n}^2|$ exists.\\

Now we are ready to prove the first assumption in Lemma \ref{lmm2}, i.e.
$$
\displaystyle \lim_{N \rightarrow \infty} ~\sum_{n=1}^{N} \Big(|A_{k_{n,N}} - A_{k_n}|/a_{k_n}^2\Big) ~\textless~ \infty.
$$
For $n ~\textless~ N$, let us define 
$$
P_{k_n,N} := \prod_{m = N+1}^{\infty}\frac{a_{k_m}}{b_{l_m}}\frac{a_{k_n}-b_{l_m}}{a_{k_n}-a_{k_m}}.
$$
Then 
\begin{equation}\label{AknDifference}
 \frac{|A_{k_{n,N}} - A_{k_n}|}{a_{k_n}^2} = 
 \Bigg|\left(\frac{A_{k_{n,N}}}{a_{k_n}}\right)\left(\frac{a_{k_n}-b_{l_n}}{a_{k_n}}[1-P_{k_n,N}]\right)\left(\frac{1}{a_{k_n}-b_{l_n}}\right)\Bigg|
\end{equation}
Using (\ref{Gnlimit}), absoulte convergence of $\prod_{n \in \mathbb{N}}(a_{k_n}/b_{l_n})$ and hence absolute convergence of 
$\sum_{n \in \mathbb{N}}[(a_{k_n}-b_{l_n})/a_{k_n}]$ we get that the limits
\begin{equation*}
 \lim_{N \rightarrow \infty}\sum_{n=1}^{N}\frac{A_{k_{n,N}}}{a_{k_n}} \quad \text{and} \quad 
 \lim_{N \rightarrow \infty}\sum_{n=1}^{N}\left(\frac{a_{k_n}-b_{l_n}}{a_{k_n}}[1-P_{k_n,N}]\right) \quad \text{converge.}
\end{equation*}
Recall that we have also showed $\{1/(a_{k_{n}} - b_{l_n})\}_{n \in \mathbb{N}} \in l^1$. Therefore by (\ref{AknDifference}) we get the 
first assumption in Lemma \ref{lmm2},
\begin{equation*}
 \lim_{N \rightarrow \infty}\sum_{n = 1}^N \frac{|A_{k_{n,N}} - A_{k_n}|}{a_{k_n}^2} ~\textless~ \infty.
\end{equation*}
After recalling that we showed existence of $\lim_{N \rightarrow \infty}\sum_{n = 1}^{N}|A_{k_n,N}/a_{k_n}^2|$, we get the second assumption 
in Lemma \ref{lmm2}, i.e. $\{A_{k_n}/a_{k_n}^2\}_{n \in \mathbb{N}} \in l^1$ as follows:
\begin{equation*}
 \lim_{N \rightarrow \infty}\sum_{n=1}^N\frac{|A_{k_n}|}{a_{k_n}^2} \leq 
 \lim_{N \rightarrow \infty}\sum_{n=1}^N\frac{|A_{k_n}-A_{k_n,N}|}{a_{k_n}^2} + 
 \lim_{N \rightarrow \infty}\sum_{n=1}^N\frac{|A_{k_n,N}|}{a_{k_n}^2} ~\textless~ \infty.
\end{equation*}
Now using Lemma \ref{lmm2} we get the desired result.

\end{proof}

\begin{theorem}\label{DDNDindex2}
Let $q \in L^1(0,\pi)$ and $\displaystyle\{a_{k_n}\}_{n\in \mathbb{N}} \subset \sigma_{0,0}$, 
$\displaystyle \{b_{l_n}\}_{n\in \mathbb{N}} \subset \sigma_{\pi/2,0}$ such that $\prod_{n\in \mathbb{N}}(a_{k_{n}}/b_{l_n})$ 
is absolutely convergent. Then $\{a_n\}_{n \in \mathbb{N}}$, $\{b_n\}_{n \in \mathbb{N}}\text{\textbackslash}\{b_{l_n}\}_{n\in \mathbb{N}}$ 
and $\{\gamma_{k_n}\}_{n \in \mathbb{N}}$ determine the potential $q$, where 
$\sigma_{DD} = \{a_n\}_{n \in \mathbb{N}}$, $\sigma_{ND} = \{b_n\}_{n \in \mathbb{N}}$ are Dirichlet-Dirichlet and Neumann-Dirichlet spectra 
and $\{\gamma_n\}_{n \in \mathbb{N}}$ are point masses of the spectral measure 
$\mu_{0,0} = \sum_{n \in \mathbb{N}}\gamma_n \delta_{a_n}$. 
\end{theorem}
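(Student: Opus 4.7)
The plan is to combine Lemma \ref{lmm3} with the asymptotic argument from the proof of Theorem \ref{DDNDindex1}, and then replace the shared eigenvalue $b_{l_s}$ (no longer available) by a sign/interlacing argument in the spirit of Step 4 of Theorem \ref{DDNDthm}. First I would reduce, exactly as in Theorem \ref{DDNDindex1}, to the unique recovery of
$$G(z):=-C\prod_{n\in\mathbb{N}}\bigl(z/b_{l_n}-1\bigr)\bigl(z/a_{k_n}-1\bigr)^{-1}$$
from its poles $\{a_{k_n}\}$ and residues $\{A_{k_n}\}$, by factoring $G(z)=m(z)/F(z)$ with $F(z)=\prod_{n\notin\{l_j\}}(z/b_{n}-1)\prod_{n\notin\{k_j\}}(z/a_{n}-1)^{-1}$ assembled entirely from the known data. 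The absolute-convergence hypothesis lets Lemma \ref{lmm3} give
$$G(z)=cz^{2}+dz+e+\sum_{n\in\mathbb{N}}A_{k_n}\Bigl(\frac{1}{z-a_{k_n}}+\frac{1}{a_{k_n}}\Bigr),$$
and setting $z=0$ immediately yields $e=G(0)=-C$.

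Let $\tilde G$ be any competitor coming from a potential $\tilde q$ that matches the given data, with corresponding spectrum $\{\tilde b_{l_n}\}$ and constant $\tilde C$. The fully known Dirichlet-Dirichlet asymptotic (\ref{asy00}) forces $\int_0^\pi q(x)\,dx=\int_0^\pi\tilde q(x)\,dx$, whence the Neumann-Dirichlet asymptotic (\ref{asy10}) gives $\tilde b_{l_n}-b_{l_n}=o(1)$; in particular $\prod a_{k_n}/\tilde b_{l_n}$ is also absolutely convergent, and Lemma \ref{lmm3} applies to $\tilde G$ as well. Running the asymptotic argument from the proof of Theorem \ref{DDNDindex1} verbatim — subtracting the {\u C}ebotarev identities at $z=b_{l_m}$ and $z=\tilde b_{l_m}$, dividing by $\tilde b_{l_m}(\tilde b_{l_m}-b_{l_m})$, and sending $m\to\infty$ — forces $c=\tilde c$ and $d=\tilde d$. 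Consequently $G-\tilde G\equiv K:=\tilde C-C$ is a real constant.

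The main obstacle is to show $K=0$. Assuming $\tilde C>C$ for contradiction, I would evaluate $G(\tilde b_{l_m})=\tilde G(\tilde b_{l_m})+K=K>0$ in the product form and tally the signs of its factors, using the spectral asymptotics together with the summability $\{1/l_n\}\in\ell^1$ established in the proof of Lemma \ref{lmm3} (which forces $k_n/l_n\to 1$ and hence asymptotic interlacing of the two subsequences) to conclude $\tilde b_{l_m}>b_{l_m}$ for all sufficiently large $m$. Then
$$H(z):=\frac{G(z)}{\tilde G(z)}=\frac{C}{\tilde C}\prod_{n\in\mathbb{N}}\frac{\tilde b_{l_n}}{b_{l_n}}\,\prod_{n\in\mathbb{N}}\frac{z-b_{l_n}}{z-\tilde b_{l_n}}$$
is well-defined (its prefactor is a ratio of absolutely convergent products) and $-H$ is a meromorphic Herglotz function. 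Applying {\u C}ebotarev's theorem to $-H$, reproducing the estimate from Step 4 of Theorem \ref{DDNDthm} to obtain $|B_n|/\tilde b_{l_n}=o(n^{-3/2})$ (with $k$ replaced by $l_n$), letting $z\to-\infty$ to conclude $D=0$ and $-H(z)=N-\sum_n B_n/(z-\tilde b_{l_n})$, and finally substituting $H(a_{k_m})=1$ and $H(b_{l_m})=0$ yields
$$1=\sum_{n\in\mathbb{N}}B_n\,\frac{b_{l_m}-a_{k_m}}{(a_{k_m}-\tilde b_{l_n})(b_{l_m}-\tilde b_{l_n})}\xrightarrow[m\to\infty]{}0,$$
a contradiction; the case $\tilde C<C$ is symmetric. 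Hence $G=\tilde G$, $\{b_{l_n}\}$ is recovered as the zero set of $G$, and Borg's theorem finishes the recovery of $q$. The most delicate step is the sign/interlacing bookkeeping for the subsequences $\{a_{k_n}\}$ and $\{b_{l_n}\}$, which relies crucially on the summability consequences of absolute convergence distilled in the proof of Lemma \ref{lmm3}.
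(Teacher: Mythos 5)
Your proposal tracks the paper faithfully up to the midpoint: the reduction to $G$, the application of Lemma \ref{lmm3}, the identification $e=G(0)=-C$, and the asymptotic argument forcing $c=\tilde c$, $d=\tilde d$ are exactly the paper's route (it too says ``follow the proof of Theorem \ref{DDNDindex1} until the last step''), so you correctly arrive at $G-\widetilde G\equiv K=\widetilde C-C$. The genuine gap is in your final step. Your plan to rerun Step 4 of Theorem \ref{DDNDthm} rests on the claim that $k_n/l_n\to 1$ yields asymptotic interlacing of $\{a_{k_n}\}$ and $\{b_{l_n}\}$, and hence of $\{b_{l_n}\}$ and $\{\widetilde b_{l_n}\}$. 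That implication is false: absolute convergence of $\prod_{n}(a_{k_n}/b_{l_n})$ only forces (roughly) $\sum_n |k_n-l_n|/l_n<\infty$ together with $\{1/l_n\}\in l^1$, and this is compatible with, e.g., $k_n=l_n-5$ for all $n$ (every pole $a_{k_n}$ then lies \emph{below} its paired zero $b_{l_n}$), with configurations alternating between $k_n=l_n$ and $k_n=l_n-5$, and even with sparse indices $n_j$ at which $a_{k_{n_j}}>b_{l_{n_j+1}}$, so that some $b_{l_m}$ lies strictly between $b_{l_{n_j}}$ and $a_{k_{n_j}}$. In such cases the sign of the factor $(\widetilde b_{l_m}-b_{l_m})/(\widetilde b_{l_m}-a_{k_m})$, and of isolated factors with $n\neq m$, varies with $m$; evaluating $G(\widetilde b_{l_m})=K$ therefore does not pin down a uniform sign of $\widetilde b_{l_m}-b_{l_m}$. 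Consequently $\{b_{l_n}\}$ and $\{\widetilde b_{l_n}\}$ need not interlace, $\pm H$ need not be Herglotz, {\u C}ebotarev's theorem (Theorem \ref{Cebotarev}) is unavailable, and the positivity of the $B_n$, the product manipulations behind the bound $|B_n|/\widetilde b_{l_n}=o(n^{-3/2})$, and the term-by-term positivity in your final sum all collapse. This is precisely why in the non-matching case $G$ itself is not Herglotz and Lemma \ref{lmm2} must allow a quadratic term $cz^2$, and why Theorem \ref{DDNDindex1}, under weaker hypotheses, needs the shared eigenvalue $b_{l_s}$: if Step 4 carried over verbatim, that extra assumption would have been superfluous.

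The paper kills the constant $K$ by a sign-free mechanism that uses absolute convergence twice. Writing the constancy as the product identity $G(z)+C=\widetilde G(z)+\widetilde C$, uniform convergence of both products on the second quadrant (this is where absolute convergence enters) permits letting $z\to\infty$ there, yielding the scalar relation $-C\gamma+C=-\widetilde C\widetilde\gamma+\widetilde C$ with $\gamma=\prod_n a_{k_n}/b_{l_n}$ and $\widetilde\gamma=\prod_n a_{k_n}/\widetilde b_{l_n}$. Separately, matching residues give $H(a_{k_m})=1$ for all $m$, and letting $m\to\infty$ in the product formula for $H$ (justified by $\widetilde b_{l_n}-b_{l_n}=o(1)$ and $\{1/(a_{k_n}-\widetilde b_{l_n})\}\in l^1$, both extracted in the proof of Lemma \ref{lmm3}) gives $\widetilde C/C=\prod_n\widetilde b_{l_n}/b_{l_n}=\gamma/\widetilde\gamma$. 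Combining the two identities yields $(\gamma-1)/(\widetilde\gamma-1)=\gamma/\widetilde\gamma$, hence $\gamma=\widetilde\gamma$ and $C=\widetilde C$, with no interlacing input whatsoever. To salvage your route you would have to add a hypothesis making the pairwise ordering of $a_{k_n}$ and $b_{l_n}$ eventually uniform, which is strictly narrower than the stated theorem.
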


\begin{proof}
 One can use Lemma \ref{lmm3} and follow the proof of Theorem \ref{DDNDindex1} until the last step, i.e. showing uniqueness of the two 
 spectra after obtaining $G(z) - \widetilde{G}(z)$ is a real constant, so let us show $G(z) - \widetilde{G}(z) = 0$. The main differences 
 in this case are that $G$ and $\widetilde{G}$ do not share any zero and the infinite products 
 $\prod_{n\in \mathbb{N}}(a_{k_{n}}/b_{l_n})$ and $\prod_{n\in \mathbb{N}}(a_{k_{n}}/\tilde{b}_{l_n})$ are absolutely convergent. Let us 
 recall that the infinite products $G$ and $\widetilde{G}$ have the following representations:
 \begin{align*}
  G(z) = cz^2 + dz - C + \sum_{n \in \mathbb{N}}A_{k_n}\left(\frac{1}{z-a_{k_n}}+\frac{1}{a_{k_n}}\right),\\
  \widetilde{G}(z) = cz^2 + dz -\widetilde{C} + \sum_{n \in \mathbb{N}}A_{k_n}\left(\frac{1}{z-a_{k_n}}+\frac{1}{a_{k_n}}\right).
 \end{align*}
 Therefore by taking difference of $G(z)$ and $\widetilde{G}(z)$ we get $G(z) + C = \widetilde{G}(z) + \widetilde{C}$, i.e.
 \begin{equation}\label{infprodrep}
  -C \prod_{n\in \mathbb{N}}\left(\frac{z}{b_{l_n}}-1\right)\left(\frac{z}{a_{k_n}}-1\right)^{-1} + C = 
  -\widetilde{C} \prod_{n\in \mathbb{N}}\left(\frac{z}{\widetilde{b}_{l_n}}-1\right)\left(\frac{z}{a_{k_n}}-1\right)^{-1} + \widetilde{C}.
 \end{equation} 
Note that since the infinite products $\prod_{n\in \mathbb{N}}(a_{k_{n}}/b_{l_n})$, $\prod_{n\in \mathbb{N}}(a_{k_{n}}/\tilde{b}_{l_n})$ 
are absolutely convergent and the two spectra $\{a_n\}_{n \in \mathbb{N}}$, $\{b_n\}_{n \in \mathbb{N}}$ lie on the positive real line, 
the infinite products on the two sides of (\ref{infprodrep}) are uniformly convergent on the second quadrant. Hence by letting $z$ go to 
infinity on the second quadrant we get
\begin{equation}\label{Cequation}
 -C \prod_{n\in \mathbb{N}}\frac{a_{k_n}}{b_{l_n}} + C = 
  -\widetilde{C} \prod_{n\in \mathbb{N}}\frac{a_{k_n}}{\widetilde{b}_{l_n}} + \widetilde{C}.
\end{equation}

Recall that $\prod_{n \in \mathbb{N}} \widetilde{b}_{l_n}/b_{l_n}$ is finite, 
since 
$$
\sum_{n \in \mathbb{N}}\frac{|\widetilde{b}_{l_n}-b_{l_n}|}{b_{l_n}} = 
\sum_{n \in \mathbb{N}}\frac{|\widetilde{\beta}_{l_n}-\beta_{l_n}|}{b_{l_n}} \leq 
\max_{n \in \mathbb{N}}|\widetilde{\beta}_{l_n}-\beta_{l_n}| \sum_{n \in \mathbb{N}}\frac{1}{b_{l_n}} ~\textless~ \infty.
$$
Therefore the infinite product $H(z) := G(z)/\widetilde{G}(z)$ is represented as
$$
H(z) := \frac{G(z)}{\widetilde{G}(z)} = \frac{C}{\widetilde{C}}\prod_{n \in \mathbb{N}}\frac{z-b_{l_n}}{b_{l_n}}\frac{\tilde{b}_{l_n}}{z-\tilde{b}_{l_n}} = 
\frac{C}{\widetilde{C}}\prod_{n \in \mathbb{N}}\frac{\tilde{b}_{l_n}}{b_{l_n}}\prod_{n \in \mathbb{N}}\frac{z-b_{l_n}}{z-\tilde{b}_{l_n}}.
$$

We know that $G$ and $\widetilde{G}$ share same poles with equivalent residues at the corresponding poles. Therefore for any $m \in \mathbb{N}$
\begin{equation}\label{ratioRes}
 1 = H(a_{k_m}) = \frac{C}{\widetilde{C}}\prod_{n \in \mathbb{N}}\frac{\tilde{b}_{l_n}}{b_{l_n}}
 \prod_{n \in \mathbb{N}}\frac{a_{k_m}-b_{l_n}}{a_{k_m}-\tilde{b}_{l_n}}.
\end{equation}

Now let us find the limit of the infinite product on the right end of (\ref{ratioRes}) as $m$ goes to $\infty$. 
This infinite product is uniformly convergent if and only if the infinite sum 
\begin{equation}
 \sum_{n \in \mathbb{N}} \left(\frac{a_{k_m}-b_{l_n}}{a_{k_m}-\tilde{b}_{l_n}} - 1\right) = 
 \sum_{n \in \mathbb{N}} \frac{\tilde{b}_{l_n} - b_{l_n}}{a_{k_m} - \tilde{b}_{l_n}}
\end{equation}
is uniformly convergent.
Note that asymptotics of the two spectra imply $\tilde{b}_{l_j} - b_{l_j} = o(1)$ as $j$ goes to infinity. Then the asymptotics of 
$\{a_{k_n}\}_{n \in \mathbb{N}}, \{b_{l_n}\}_{n \in \mathbb{N}}$ and $\{\tilde{b}_{l_n}\}_{n \in \mathbb{N}}$ together with absolute 
convergence of the infinite products $\prod_{n\in \mathbb{N}}(a_{k_{n}}/b_{l_n})$, $\prod_{n\in \mathbb{N}}(a_{k_{n}}/\tilde{b}_{l_n})$ imply 
that 
\begin{equation}
 \sum_{n \in \mathbb{N}} \left|\frac{\tilde{b}_{l_n} - b_{l_n}}{a_{k_m} - \tilde{b}_{l_n}}\right| \leq 
 \sum_{n \in \mathbb{N}} \left|\frac{\tilde{b}_{l_n} - b_{l_n}}{a_{k_n} - \tilde{b}_{l_n}}\right| ~\textless~ \infty,
\end{equation}
since $\{1/(a_{k_n} - \tilde{b}_{l_n})\}_{n \in \mathbb{N}} \in l^1$ as we discussed in the proof of Lemma \ref{lmm3}. 
Therefore by letting $m$ go to $\infty$ in (\ref{ratioRes}) we get 
$\widetilde{C}/C = \prod_{j \in \mathbb{N}}\tilde{b}_{l_j}/b_{l_j}$. If we define 
$\gamma := \prod_{n\in \mathbb{N}}a_{k_n}/b_{l_n}$ and 
$\widetilde{\gamma} := \prod_{n\in \mathbb{N}}a_{k_n}/\widetilde{b}_{l_n}$, we get 
$\widetilde{C}/C = \gamma/\widetilde{\gamma}$. This identity and (\ref{Cequation}) imply 
$\displaystyle \frac{\gamma - 1}{\widetilde{\gamma} - 1} = \frac{\gamma}{\widetilde{\gamma}}$ and hence $\gamma = \widetilde{\gamma}$. 
Therefore $C = \widetilde{C}$. This implies uniqueness of $G(z)$ and hence uniqueness of $\{b_{l_n}\}_{n\in \mathbb{N}}$. 
After unique recovery of the two spectra $\sigma_{DD}$ and $\sigma_{ND}$, the potential is uniquely determined by Borg's theorem.
\end{proof}

\subsection{\bf{General boundary conditions}}

As discussed in Section 2.2, the Weyl $m$-function for the Schr\"{o}dinger equation 
\begin{equation}\label{Scheq2}
Lu = -u''+qu = zu
\end{equation}
with boundary conditions 
\begin{align}
 &u(0)\cos\alpha - u'(0)\sin\alpha = 0 \label{BCalpha}\\
 &u(\pi)\cos\beta + u'(\pi)\sin\beta = 0, \label{BCbeta}
\end{align}
is defined as $\displaystyle m_{\alpha,\beta}(z) = \frac{\cos(\alpha)u_z'(0) + \sin(\alpha)u_z(0)}{-\sin(\alpha)u_z'(0) + \cos(\alpha)u_z(0)}$, 
where $u_z(t)$ is a solution of (\ref{Scheq2}) satisfying (\ref{BCbeta}) and $\alpha,\beta \in [0,\pi)$. 
In order to prove our result with boundary conditions (\ref{BCalpha}) and (\ref{BCbeta}) we need to consider more general $m$-functions. 
Recall that we have defined the $m$-function in Section 2.2 by introducing two solutions $s_z(t)$ and $c_z(t)$ of (\ref{Scheq2}) satisfying the initial 
conditions 
\begin{align*}
&s_z(0) = \sin(\alpha), \quad s'_z(0) = \cos(\alpha)\\
&c_z(0) = \cos(\alpha), \quad c'_z(0) = -\sin(\alpha)
\end{align*}
and $u_z(t)$, a solution of ($\ref{Scheq2}$) with boundary conditions $u_z(\pi) = \sin\beta$, $u_z'(\pi) = -\cos\beta$. 
The same steps to define the $m$-function as in Section 2.2 can be followed if $c_{z}(t)$ is a linearly independent solution with 
$W(c_z,s_z) = 1$. Therefore we introduce two solutions $s_z(t)$ and $c_z(t)$ of (\ref{Scheq2}) satisfying the initial conditions 
\begin{align*}
s_z(0) = \sin(\alpha_2), \quad &s'_z(0) = \cos(\alpha_2)\\
c_z(0) = \frac{\sin(\alpha_1)}{\sin(\alpha_1-\alpha_2)}, \quad &c'_z(0) = \frac{\cos(\alpha_1)}{\sin(\alpha_1-\alpha_2)}
\end{align*}
for $\alpha_1,\alpha_2 \in [0,\pi)$, $\sin(\alpha_1-\alpha_2) \neq 0$ and same $u_z(t)$. 
Then we can define the $m$-function $m_{\alpha_1,\alpha_2,\beta}$.
\begin{definition}
 The $m$-function $m_{\alpha_1,\alpha_2,\beta}$ is defined as
 \begin{equation*}
    m_{\alpha_1, \alpha_2, \beta}(z) 
    := \frac{1}{\sin(\alpha_2-\alpha_1)} \left[\frac{-\sin(\alpha_1)u_z'(0) \
    + \cos(\alpha_1)u_z(0)}{-\sin(\alpha_2)u_z'(0) + \cos(\alpha_2)u_z(0)}\right],
\end{equation*}
where $\alpha_1,\alpha_2,\beta \in [0,\pi)$, $\sin(\alpha_2 - \alpha_1) \neq 0$ and $u_z(t)$ is a solution of ($\ref{Scheq2}$) with boundary 
conditions $u_z(\pi) = \sin\beta$, $u_z'(\pi) = -\cos\beta$. 
\end{definition}

\begin{remark}
The $m$-function $m_{\alpha,\beta}$ we discussed in Section 2.2 is obtained by letting $\alpha_1 = \alpha - \pi/2$ and 
$\alpha_2 = \alpha$, i.e. $m_{\alpha - \frac{\pi}{2},\alpha,\beta}(z) = m_{\alpha,\beta}(z)$.
\end{remark}

The $m$-function $m_{\alpha_1,\alpha_2,\beta}(z)$ is a meromorphic Herglotz function having real zeros 
on $\sigma_{\alpha_1,\beta}$ and real poles on $\sigma_{\alpha_2,\beta}$, which are interlacing. 
It is a meromorphic Herglotz function, since $m_{0,\beta}(z) = u_z'(0)/u_z(0)$ is a meromorphic Herglotz function and 
$sgn[\Im(m_{\alpha_1,\alpha_2,\beta}(z))] = sgn[\Im(m_{0,\beta}(z))]$. Therefore Herglotz representation theorem implies
\begin{equation*}
 m_{\alpha_1,\alpha_2,\beta}(z) = az + b + \int\left[\frac{1}{t-z}-\frac{t}{1+t^2}\right]d\mu_{\alpha_1,\alpha_2,\beta}(t),
\end{equation*}
where $a,b\in \mathbb{R}$ and $\mu_{\alpha_1,\alpha_2,\beta}$ is a positive discrete Poisson-summable measure supported on the spectrum 
$\sigma_{\alpha_2,\beta}$. Let us call $\mu_{\alpha_1,\alpha_2,\beta}$ the spectral measure corresponding to $(\alpha_1,\alpha_2,\beta)$. 
Now we prove our results with general boundary conditions.

\begin{theorem}\label{GBCthm} 
Let $q \in L^1(0,\pi)$, $A \subset \mathbb{N}$, $\sin(\alpha_2 - \alpha_1) \neq 0$ and 
$\alpha_1,\alpha_2,\beta \in [0,\pi)$. Then $\{a_n\}_{n \in \mathbb{N}}$, 
$\{b_n\}_{n \in \mathbb{N}\text{\textbackslash} A}$ and $\{\gamma_n\}_{n \in A}$ determine the potential $q$, 
where $\sigma_{\alpha_2,\beta} = \{a_n\}_{n \in \mathbb{N}}$, $\sigma_{\alpha_1,\beta} = \{b_n\}_{n \in \mathbb{N}}$ are two spectra
and $\{\gamma_n\}_{n \in \mathbb{N}}$ are 
point masses of the corresponding spectral measure $\mu_{\alpha_1,\alpha_2,\beta} = \sum_{n \in \mathbb{N}}\gamma_n \delta_{a_n}$. 
\end{theorem}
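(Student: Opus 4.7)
The plan is to replicate the five-step strategy from the proof of Theorem \ref{DDNDthm}, with the generalized $m$-function $m_{\alpha_1,\alpha_2,\beta}$ playing the role of $m_{0,0}$. The key structural fact is that $m_{\alpha_1,\alpha_2,\beta}$ is a meromorphic Herglotz function whose real poles form $\sigma_{\alpha_2,\beta} = \{a_n\}$ and whose real zeros form $\sigma_{\alpha_1,\beta} = \{b_n\}$, and these two sequences interlace.

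First I would establish the analog of Lemma \ref{lmm1}: an infinite product representation
\begin{equation*}
 m_{\alpha_1,\alpha_2,\beta}(z) = \pm C \prod_{n \in \mathbb{N}} \left(\frac{z}{b_n} - 1\right)\left(\frac{z}{a_n} - 1\right)^{-1}
\end{equation*}
(up to normalization) via the same Schwarz-integral/meromorphic-inner-function argument used in Lemma \ref{lmm1}. Setting $\Theta := (m_{\alpha_1,\alpha_2,\beta} - i)/(m_{\alpha_1,\alpha_2,\beta} + i)$, the set $E = \{x \in \mathbb{R} : \Im \Theta(x) > 0\}$ is a union of intervals with endpoints in $\{a_n\} \cup \{b_n\}$, and computing $\exp(-\pi\widetilde{\chi_E})$ via the Hilbert transform of $\chi_E$ yields the desired product. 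The sign and leading constant depend on the relative ordering of the two sequences, which is controlled by the sign of $\sin(\alpha_2-\alpha_1)$ and on the applicable asymptotic regime among (\ref{asy11})--(\ref{asy01}).

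Second, using the Herglotz representation of $m_{\alpha_1,\alpha_2,\beta}$, the data $\{\gamma_n\}_{n \in A}$ gives the residues $\mathrm{Res}(m_{\alpha_1,\alpha_2,\beta}, a_n)$ for $n \in A$. Factoring $m_{\alpha_1,\alpha_2,\beta}(z) = F(z) G(z)$, where
\begin{equation*}
 G(z) := \pm C \prod_{n \in A} \left(\frac{z}{b_n} - 1\right)\left(\frac{z}{a_n} - 1\right)^{-1}
\end{equation*}
collects the unknown zeros together with their matching poles, the residue $\mathrm{Res}(G,a_n) = \mathrm{Res}(m_{\alpha_1,\alpha_2,\beta},a_n)/F(a_n)$ is computable from the data for every $n \in A$. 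Since $\{a_n\}_{n \in A}$ and $\{b_n\}_{n \in A}$ inherit the interlacing, $\pm G$ is meromorphic Herglotz, and {\u C}ebotarev's theorem yields
\begin{equation*}
 G(z) = dz + e + \sum_{n \in A} A_n \left(\frac{1}{a_n - z} - \frac{1}{a_n}\right),
\end{equation*}
with the $A_n = -\mathrm{Res}(G,a_n)$ known, and only $d \ge 0$ and $e \in \mathbb{R}$ left to pin down.

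The uniqueness of $d$ and then of $e$ (equivalently of the prefactor $C$) follow by the same arguments as Steps 3 and 4 of the proof of Theorem \ref{DDNDthm}: compare $G$ with an alternative $\widetilde G$ sharing the same poles and residues, evaluate at the zeros $b_k$ and $\widetilde b_k$, and use the spectral asymptotics (\ref{asy11})--(\ref{asy01}) to obtain $\widetilde b_k - b_k = o(1)$ (common knowledge of $\sigma_{\alpha_2,\beta}$ forces the two potentials to have the same mean, so the leading two terms of $\sigma_{\alpha_1,\beta}$ also agree), which forces $d = \widetilde d$; the subsequent Herglotz analysis of $H = G/\widetilde G$, together with interlacing of $\{b_n\}_{n \in A}$ and $\{\widetilde b_n\}_{n \in A}$, forces $C = \widetilde C$. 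With $G$ unique, both full spectra $\sigma_{\alpha_1,\beta}$ and $\sigma_{\alpha_2,\beta}$ are recovered, and since $\sin(\alpha_1 - \alpha_2) \ne 0$ Levinson's theorem (Theorem \ref{Levinson}) determines $q$. The main obstacle is Step 1: the correct sign and prefactor in the infinite product depend on which case among (\ref{asy11})--(\ref{asy01}) applies, and these case distinctions must be handled so that they do not propagate into the uniqueness analysis of Steps 3 and 4.
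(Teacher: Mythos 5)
Your overall architecture matches the paper's: product representation of $m_{\alpha_1,\alpha_2,\beta}$, factorization $m=FG$, {\u C}ebotarev representation of $G$, uniqueness of the polynomial part, then Levinson's theorem. But there is a genuine gap exactly at the point you wave off: the claim that uniqueness of $d$ ``follows by the same arguments as Step 3 of Theorem \ref{DDNDthm}.'' That argument hinges on the inequality (\ref{eigenvaluesBound}),
\[
 |\widetilde{b}_k(a_n - b_k)(a_n - \widetilde{b}_k)|^{-1} \leq |\widetilde{b}_n(a_n - b_n)(a_n - \widetilde{b}_n)|^{-1} \leq 2/a_n^2,
\]
whose right-hand bound requires the zero--pole gap $a_n - b_n$ to grow like $n$, as it does when exactly one of $\alpha_1,\alpha_2$ vanishes (leading terms $n^2$ versus $(n-1/2)^2$). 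When $\alpha_1 \neq 0$ and $\alpha_2 \neq 0$, both spectra satisfy (\ref{asy11}) with the \emph{same} leading term $(n-1)^2$, so $a_n - b_n$ tends to the constant $\frac{2}{\pi}[\cot(\alpha_2)-\cot(\alpha_1)]$ and $|\widetilde{b}_n(a_n-b_n)(a_n-\widetilde{b}_n)|^{-1}$ is of order $1/a_n$, not $1/a_n^2$. Since {\u C}ebotarev's theorem only yields $\sum_{n \in A} A_n/a_n^2 < \infty$, the domination step showing that the right-hand side of (\ref{equ4}) tends to zero breaks down, and your Step 3 does not close in precisely these cases.

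The paper plugs this hole with two devices, neither of which appears in your proposal. In case (i) ($\alpha_1,\alpha_2 \neq 0$ with $b_n < a_n$), it shows $G$ and $\widetilde{G}$ are bounded on $(-\infty,0)$ --- using convergence of $\prod_{n \in A} a_n/b_n$, which follows from $|a_n - b_n| = O(1)$ and $b_n = n^2 + o(n^2)$ --- so the linear polynomial $G - \widetilde{G} = (d-\widetilde{d})x + e - \widetilde{e}$ stays bounded as $x \to -\infty$, forcing $d = \widetilde{d}$. In case (iv) ($a_n < b_n$, which includes $\alpha_1 < \alpha_2$ both nonzero), it inserts an artificial zero: pick $b_1 < a_1$, shift indices inside $A$, and work with $\frac{z-b_1}{b_1}G(z)$; since $G$ and $\widetilde{G}$ are assigned the \emph{same} artificial zero $b_1$, their a priori linear difference must vanish at $b_1$, whence $G - \widetilde{G}$ is automatically a real constant and no Step-3-type estimate is needed at all. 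Your observation that $\pm G$ is Herglotz under either ordering is correct, and your Steps 1, 2, 5 and the reduction of Step 4 to the argument of Theorem \ref{DDNDthm} do track the paper; but your closing remark that the case distinctions ``must be handled so that they do not propagate into Steps 3 and 4'' identifies the obstacle without overcoming it --- in the bounded-gap cases a genuinely different argument is required, not a rerun of the old one.
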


\begin{proof}
Wlog let $a_n$ and $b_n$ be positive for all $n$. We follow the arguments we used in the proofs of Lemma \ref{lmm1} and 
Theorem \ref{DDNDthm}, but there are two differences: asymptotics of the two spectra, depending on $\alpha_1,\alpha_2,\beta$ and 
hence the order relation betweeen $a_n$ and $b_n$. Thus, we consider the following cases.\\

  (i) \underline{$\alpha_1 \neq 0$, $\alpha_2 \neq 0$, $\alpha_1 ~\textgreater~ \alpha_2$ :}\\
  
  When $\beta \neq 0$, the two spectra $\sigma_{\alpha_2,\beta} = \{a_n\}_{n \in \mathbb{N}}$ and 
  $\sigma_{\alpha_1,\beta} = \{b_n\}_{n \in \mathbb{N}}$ satisfy the asymptotics (\ref{asy11}) and hence $a_n ~\textgreater~ b_n$ for 
  all $n\in \mathbb{N}$. Therefore using the proof of Lemma \ref{lmm1}, $m_{\alpha_1,\alpha_2,\beta}(z)$ can be represented as 
  (\ref{mfcn2}). Using this representation and {\u C}ebotarev's theorem as we discussed in the proof of Theorem \ref{DDNDthm}, 
  the meromorphic Herglotz function $G(z)$ defined as 
  \begin{equation}\label{defG}
   G(z):=-C \prod_{n\in A}\left(\frac{z}{b_{n}}-1\right)\left(\frac{z}{a_n}-1\right)^{-1}
  \end{equation}
has the following representation:
  \begin{equation*}
 G(z) = dz + e + \sum_{n \in A}A_n\left(\frac{1}{z-a_n}+\frac{1}{a_n}\right).
\end{equation*}
 Only unknown constants on the right hand side are $d$ and $e$. In order to show uniqueness of the linear term $dz +e $, let us introduce 
 $\widetilde{G}(z)$ as we did in the proof of Theorem~\ref{DDNDthm}:
\begin{itemize}
 \item The infinite product $\widetilde{G}$ is defined as
 \begin{equation*}
  \widetilde{G}(z) := -\widetilde{C} \prod_{n\in A}\left(\frac{z}{\widetilde{b}_{n}}-1\right)\left(\frac{z}{\widetilde{a}_n}-1\right)^{-1},
 \end{equation*}
where $\widetilde{C} ~\textgreater~ 0$, the set of poles $\{\widetilde{a}_n\}_{n \in A}$ and the set of 
zeros $\{\widetilde{b}_n\}_{n \in A}$ satisfy asymptotics (\ref{asy11}).\\
 \item $G$ and $\widetilde{G}$ share same set of poles with equivalent residues at the corresponding poles, i.e. $\widetilde{a}_k = a_k$ and 
 $Res(\widetilde{G},a_k) =  Res(G,a_k)$ for any $k \in A$.\\
 \item By the equivalence of poles and residues of $G$ and $\widetilde{G}$ and {\u C}ebotarev's theorem, $\widetilde{G}(z)$ has the 
 representation
 \begin{equation*}\label{CebTypeRep1}
  \widetilde{G}(z) = \widetilde{d}z + \widetilde{e} + \sum_{n \in A}A_n\left(\frac{1}{z-a_n}+\frac{1}{a_n}\right),
 \end{equation*}
where $\widetilde{d}\geq 0$, $\widetilde{e} \in \mathbb{R}$.
\end{itemize}

Therefore difference of $G$ and $\widetilde{G}$ is a linear polynomial, i.e.
\begin{equation}\label{GminGt}
 G(z) - \widetilde{G}(z) = (d-\widetilde{d})z + e - \widetilde{e}
\end{equation}

Note that since $\{a_n\}_{n \in \mathbb{N}}$, $\{b_n\}_{n \in \mathbb{N}}$ and $\{\tilde{b}_n\}_{n \in \mathbb{N}}$ are subsets of 
$(0,\infty)$ and satisfy asymptotics (\ref{asy11}), for any $x \in (-\infty , 0)$ we get
\begin{align*}
 |G(x)-\widetilde{G}(x)| &\leq \left|C \prod_{n\in A}\left(\frac{x}{b_{n}}-1\right)\left(\frac{x}{a_n}-1\right)^{-1}\right| + 
 \left| \widetilde{C} \prod_{n\in A}\left(\frac{x}{\widetilde{b}_{n}}-1\right)\left(\frac{x}{a_n}-1\right)^{-1}\right|\\
 &\leq C \prod_{n\in A}\frac{a_n}{b_{n}} + \widetilde{C} \prod_{n\in A}\frac{a_n}{\widetilde{b}_{n}} ~\textless~ \infty.
\end{align*}
Convergence of the infinite product $\prod_{n\in A}a_n/b_{n}$ follows from the fact that
\begin{equation*}
 \sum_{n\in A} \frac{a_n-b_n}{b_n} \leq M \sum_{n\in A}\frac{1}{n^2},
\end{equation*}
for some $M ~\textless~ \infty$, since asymptotics (\ref{asy11}) imply $|a_n-b_n| \leq M_1$ for some $M_1 ~\textless~ \infty$ independent 
of $n$ and $a_n = n^2 + o(n^2)$, $b_n = n^2 + o(n^2)$ as $n$ goes to infinity. Therefore 
\begin{equation*}
 \lim_{x \rightarrow -\infty} |(d-\widetilde{d})x + e - \widetilde{e}| = \lim_{x \rightarrow -\infty} |G(x)-\widetilde{G}(x)| = 
 \left|C \prod_{n\in A}\frac{a_n}{b_{n}} - \widetilde{C} \prod_{n\in A}\frac{a_n}{\widetilde{b}_{n}}\right| ~\textless~ \infty, 
\end{equation*}
so we get a contradiction unless $d = \tilde{d}$. This implies that $G(z) - \widetilde{G}(z)$ is a real constant, which is 
$G(0) - \widetilde{G}(0) = \widetilde{C} - C$. In order to show $\widetilde{C} = C$, we follow exactly the same arguments used in the proof 
of Theorem \ref{DDNDthm}.

This gives uniqueness of $G(z)$ and hence uniqueness of $\{b_n\}_{n\in A}$. 
After unique recovery of the two spectra $\sigma_{\alpha_2,\beta}$ and $\sigma_{\alpha_1,\beta}$, Levinson's theorem uniquely determines 
the potential.\\

When $\beta = 0$, one can apply same arguments. The only difference appears in asymptotics of 
$\sigma_{\alpha_2,\beta} = \{a_n\}_{n \in \mathbb{N}}$ and $\sigma_{\alpha_1,\beta} = \{b_n\}_{n \in \mathbb{N}}$, which does not affect the 
result.\\
 
 (ii) \underline{$\alpha_1 \neq 0$, $\alpha_2 = 0$, $\beta = 0$ :}\\ 
  
  The two spectra $\sigma_{\alpha_2,\beta} = \{a_n\}_{n \in \mathbb{N}}$ and 
  $\sigma_{\alpha_1,\beta} = \{b_n\}_{n \in \mathbb{N}}$ satisfy the asymptotics (\ref{asy00}) and (\ref{asy10}) respectively. One then 
  obtains the result by following the proofs of Lemma~\ref{lmm1} and Theorem~\ref{DDNDthm}.\\
  
  (iii) \underline{$\alpha_1 \neq 0$, $\alpha_2 = 0$, $\beta \neq 0$ :}\\ 
  
   The two spectra $\sigma_{\alpha_2,\beta} = \{a_n\}_{n \in \mathbb{N}}$ and 
  $\sigma_{\alpha_1,\beta} = \{b_n\}_{n \in \mathbb{N}}$ satisfy the asymptotics (\ref{asy01}) and (\ref{asy11}) respectively, which is similar to the 
  previous case.\\
  
   (iv) \underline{$\alpha_1 \neq 0$, $\alpha_2 \neq 0$, $\alpha_1 ~\textless~ \alpha_2$ or 
    $\alpha_1 = 0$, $\alpha_2 \neq 0$, $\beta \neq 0$ or $\alpha_1 = 0$, $\alpha_2 \neq 0$, $\beta = 0$ :}\\
  
  In all of these three cases, $a_n ~\textless ~b_n$ for all $n\in \mathbb{N}$. 
  Therefore using the proof of Lemma \ref{lmm1}, $m_{\alpha_1,\alpha_2,\beta}(z)$ can be represented as 
  \begin{equation*}
   m_{\alpha_1,\alpha_2,\beta}(z) = C \prod_{n\in \mathbb{N}}\left(\frac{z}{b_{n}}-1\right)\left(\frac{z}{a_n}-1\right)^{-1}.
  \end{equation*}
  In order to represent $G(z)$ as (\ref{defG}), an extra factor is required, so we shift 
  indices of $b_n$ up by one inside $A$ and let $b_1$ be a positive real number less than $a_1$, assuming wlog $1 \in A$. 
  Then $\frac{z-b_1}{b_1}G(z)$ can be represented as (\ref{defG}). 
  Using this representation and {\u C}ebotarev's theorem, the meromorphic Herglotz function 
  $\frac{z-b_1}{b_1}G(z)$ has the following representation:
  
  \begin{equation*}
 \left(\frac{z-b_1}{b_1}\right)G(z) = az + b + \sum_{n \in A}A_n\left(\frac{1}{a_n-z}-\frac{1}{a_n}\right).
\end{equation*}
  Therefore if we introduce $\widetilde{G}(z)$ similar to the previous cases, then $\frac{z-b_1}{b_1}G(z)$ and 
  $ \frac{z-b_1}{b_1}\widetilde{G}(z)$ share the same set of poles $\{a_n\}_{n \in A}$ with the same residues $\{-A_n\}_{n \in A}$ and 
  have the set of zeros $\{b_n\}_{n \in A}$ and 
$\{b_1\} \cup \{\widetilde{b}_n\}_{n \in A\text{\textbackslash}\{1\}}$ 
respectively, so the difference of $\frac{z-b_1}{b_1}G(z)$ and $ \frac{z-b_1}{b_1}\widetilde{G}(z)$ is a linear polynomial with real 
coefficients and hence $G(z) - \widetilde{G}(z)$ is a real constant, which is 
$G(0) - \widetilde{G}(0) = \widetilde{C} - C$. In order to show $\widetilde{C} = C$, we follow exactly the same arguments used in the proof 
of Theorem \ref{DDNDthm}.

This implies uniqueness of $G(z)$ and hence uniqueness of $\{b_n\}_{n\in A}$. 
After unique recovery of the two spectra $\sigma_{\alpha_2,\beta}$ and $\sigma_{\alpha_1,\beta}$, Levinson's theorem uniquely determines 
the potential.
\end{proof}

\begin{remark}
 Theorem~\ref{GBCthm} gives Marchenko's theorem with the $m$-function $m_{\alpha_1,\alpha_2,\beta}$ as a corollary if we let $A = \mathbb{N}$. 
 By letting $A = \emptyset$, we get the statement of Levinson's theorem.
\end{remark}

For the non-matching index sets case, let us recall the definitions of $A_{k_n,m}$ and $A_{k_n}$:
\begin{align*}
 A_{k_n,m} &:= \frac{a_{k_n}}{b_{l_n}}(a_{k_n}-b_{k_n})\prod_{j=1,j\neq n}^m\frac{a_{k_j}}{b_{l_j}}\frac{a_{k_n}-b_{l_j}}{a_{k_n}-a_{k_j}},\\
 A_{k_n} &:= \frac{a_{k_n}}{b_{l_n}}(a_{k_n}-b_{k_n})\prod_{j=1,j\neq n}^{\infty}\frac{a_{k_j}}{b_{l_j}}\frac{a_{k_n}-b_{l_j}}{a_{k_n}-a_{k_j}}.
\end{align*}

We can prove Theorem \ref{DDNDindex1} and Theorem \ref{DDNDindex2} with general boundary conditions following the same proofs. However, if 
boundary conditions $\alpha_1$ and $\alpha_2$ are nonzero, then we need that eventually the two index sets $\{k_n\}_{n \in \mathbb{N}}$ and 
$\{l_n\}_{n \in \mathbb{N}}$ have 
no common element.

\begin{theorem}\label{GBCindex1} 
 Let $q \in L^1(0,\pi)$, $\sin(\alpha_2 - \alpha_1) \neq 0$, $\alpha_1,\alpha_2,\beta \in [0,\pi)$ and 
 $\displaystyle\{a_{k_n}\}_{n\in \mathbb{N}} \subset \sigma_{\alpha_2,\beta}$, 
$\displaystyle \{b_{l_n}\}_{n\in \mathbb{N}} \subset \sigma_{\alpha_1,\beta}$ satisfy following properties:
\begin{itemize}
 \item $\displaystyle \lim_{m \rightarrow \infty} ~\sum_{n=1}^{m} \Big(|A_{k_n,m} - A_{k_n}|/a_{k_n}^2\Big) ~\textless~ \infty$,
 \item $\{A_{k_n}/a_{k_n}^2\}_{n \in \mathbb{N}} \in l^1$.\\
\end{itemize}

(i) If $\alpha_1 = 0$ or $\alpha_2 = 0$, then $\{a_n\}_{n \in \mathbb{N}}$, 
$\{b_n\}_{n \in \mathbb{N}}\text{\textbackslash}\{b_{l_n}\}_{n\in \mathbb{N}\text{\textbackslash}\{s\}}$ and 
$\{\gamma_{k_n}\}_{n \in \mathbb{N}}$ determine the potential $q$ for any $s \in \mathbb{N}$, where $\sigma_{\alpha_2,\beta} = \{a_n\}_{n \in \mathbb{N}}$, $\sigma_{\alpha_1,\beta} = \{b_n\}_{n \in \mathbb{N}}$ are two spectra and 
$\{\gamma_n\}_{n \in \mathbb{N}}$ are 
point masses of the spectral measure $\mu_{\alpha_1,\alpha_2,\beta} = \sum_{n \in \mathbb{N}}\gamma_n \delta_{a_n}$.\\

(ii) If $\alpha_1 \neq 0$, $\alpha_2 \neq 0$ and there exists $N \in \mathbb{N}$ such that $k_n \neq l_n$ for all $n ~\textgreater~ N$, then 
$\{a_n\}_{n \in \mathbb{N}}$, 
$\{b_n\}_{n \in \mathbb{N}}\text{\textbackslash}\{b_{l_n}\}_{n\in \mathbb{N}\text{\textbackslash}\{s\}}$ and 
$\{\gamma_{k_n}\}_{n \in \mathbb{N}}$ determine the potential $q$ for any $s \in \mathbb{N}$, where 
$\sigma_{\alpha_2,\beta} = \{a_n\}_{n \in \mathbb{N}}$, $\sigma_{\alpha_1,\beta} = \{b_n\}_{n \in \mathbb{N}}$ are two spectra and 
$\{\gamma_n\}_{n \in \mathbb{N}}$ are 
point masses of the spectral measure $\mu_{\alpha_1,\alpha_2,\beta} = \sum_{n \in \mathbb{N}}\gamma_n \delta_{a_n}$.
\end{theorem}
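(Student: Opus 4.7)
The plan is to adapt the strategy of Theorem~\ref{DDNDindex1}, fused with the case analysis of Theorem~\ref{GBCthm}. As before, knowing $\{\gamma_{k_n}\}_{n \in \mathbb{N}}$ is equivalent to knowing $Res(m_{\alpha_1,\alpha_2,\beta}, a_{k_n})$ for every $n$. Factoring $m_{\alpha_1,\alpha_2,\beta} = F(z)\, G(z)$, where $G$ collects precisely the factors indexed by $\{k_n\}$ and $\{l_n\}$ while $F$ is built from the remaining factors (all of which are known from the data), reduces the problem to showing that the poles $\{a_{k_n}\}$, the residues $\{Res(G, a_{k_n})\}$, and the single zero $b_{l_s}$ uniquely determine $G$. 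Depending on which of the cases (i)--(iv) in the proof of Theorem~\ref{GBCthm} applies, one may first have to multiply $G$ by the linear correction $(z-b_1)/b_1$ so that it becomes a meromorphic Herglotz function with the correct sign. Lemma~\ref{lmm2} then yields the {\u C}ebotarev-type representation
\begin{equation*}
G(z) = c z^2 + d z + e + \sum_{n \in \mathbb{N}} A_{k_n}\left(\frac{1}{z - a_{k_n}} + \frac{1}{a_{k_n}}\right),
\end{equation*}
so that only $c$, $d$, $e$ remain unknown.

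I would then introduce a comparison product $\widetilde{G}$ sharing the poles and residues of $G$ and also the single known zero $b_{l_s} = \widetilde{b}_{l_s}$. The difference $G - \widetilde{G}$ is a polynomial of degree at most two. Evaluating at zeros $b_{l_m}$ of $G$ and $\widetilde{b}_{l_m}$ of $\widetilde{G}$ for $m \neq s$, subtracting the two resulting identities, dividing by $\widetilde{b}_{l_m}(\widetilde{b}_{l_m} - b_{l_m})$ and letting $m \to \infty$ should force $c = \widetilde{c}$ and $d = \widetilde{d}$, following the template of Theorem~\ref{DDNDindex1}: the ``$(c-\widetilde{c})\widetilde{b}_{l_m}/(\widetilde{b}_{l_m}-b_{l_m})$'' term on the left blows up unless the leading coefficients agree, once one shows $\widetilde{b}_{l_m} - b_{l_m} = o(1)$ via the common $(1/\pi)\int q$ correction inherited from the matching pole asymptotics. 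The shared zero $b_{l_s}$ then pins down $e = \widetilde{e}$, yielding $G = \widetilde{G}$ and hence unique recovery of $\sigma_{\alpha_1,\beta}$; Levinson's theorem then recovers the potential.

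The hard part, and the reason case~(ii) requires the hypothesis $k_n \neq l_n$ eventually, is in making the divided right-hand side decay. In case~(i), the two spectra have genuinely different subleading asymptotics (the $n^2$ terms cancel but the linear terms do not, compare (\ref{asy00})--(\ref{asy11})), so $|a_{k_n} - b_{l_n}|$ grows at least like $k_n$ and the bound $|\widetilde{b}_{l_m}(a_{k_n} - b_{l_m})(a_{k_n} - \widetilde{b}_{l_m})|^{-1} \leq 2/a_{k_n}^2$ holds, letting us invoke the hypothesis $\{A_{k_n}/a_{k_n}^2\} \in l^1$. In case~(ii), however, both spectra share the leading two terms $(n-1)^2 - \mathrm{const}\cdot n$ by (\ref{asy11}), and if $k_n = l_n$ infinitely often then $a_{k_n} - b_{l_n}$ reduces to the bounded constant $(2/\pi)[\cot\alpha_2 - \cot\alpha_1] + o(1)$; the above product then decays only like $1/a_{k_n}$, and the dominating series for the right-hand side diverges. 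Requiring $k_n \neq l_n$ for $n > N$ forces $|a_{k_n} - b_{l_n}| \gtrsim k_n$, restoring the bound and allowing the limit argument to proceed. I expect this asymptotic bookkeeping, rather than the algebraic manipulation, to be the main technical obstacle.
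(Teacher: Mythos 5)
Your proposal follows essentially the same route as the paper's proof: factor $m_{\alpha_1,\alpha_2,\beta} = F\,G$ with $F$ known, apply Lemma~\ref{lmm2} to get the quadratic {\u C}ebotarev-type representation $cz^2+dz+e+\sum_n A_{k_n}\left(\frac{1}{z-a_{k_n}}+\frac{1}{a_{k_n}}\right)$, compare with a $\widetilde{G}$ sharing poles, residues and the zero $b_{l_s}$, force $c=\widetilde{c}$, $d=\widetilde{d}$ by the divided limit argument using $\widetilde{b}_{l_m}-b_{l_m}=o(1)$, and finish with Levinson's theorem; moreover your diagnosis of case (ii) — that for $\alpha_1,\alpha_2\neq 0$ a coincidence of indices makes $a_{k_n}-b_{l_n}=\frac{2}{\pi}[\cot\alpha_2-\cot\alpha_1]+o(1)$ bounded, degrading the key bound from $2/a_{k_n}^2$ to order $1/a_{k_n}$, while eventual non-coincidence restores a gap $\gtrsim k_n$ — is exactly the mechanism in the paper, which implements it by replacing the comparison index $k_n$ in (\ref{eigenvaluesBound}) with $k_n+1$. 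The one superfluous element is your suggested $(z-b_1)/b_1$ correction: Lemma~\ref{lmm2} is proved via partial products and the maximum modulus principle and requires no Herglotz or interlacing structure, so no such normalization is needed in the non-matching case (and the paper uses none here).
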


\begin{proof}
In the proof of Theorem \ref{DDNDindex1} we used the inequalities (\ref{eigenvaluesBound}), namely
\begin{equation*}
 |\widetilde{b}_{l_m}(a_{k_n} - b_{l_m})(a_{k_n} - \widetilde{b}_{l_m})|^{-1} \leq 
 |\widetilde{b}_{k_n}(a_{k_n} - b_{k_n})(a_{k_n} - \widetilde{b}_{k_n})|^{-1} \leq 2/a_{k_n}^2.
\end{equation*}

If $\alpha_1 = 0$ or $\alpha_2 = 0$, these inequalities are still valid for any $m \in \mathbb{N}\text{\textbackslash}\{s\}$ and for 
sufficiently large $n \in \mathbb{N}$. Recall that $\widetilde{b}_{k_j} := b_{k_j}$ if ${k_j} \notin \{l_n\}_{n \in \mathbb{N}}$.

If $\alpha_1 \neq 0$, $\alpha_2 \neq 0$ and there exists $N \in \mathbb{N}$ such that $k_n \neq l_n$ for all $n ~\textgreater~ N$, we modify 
these inequalities as follows:
\begin{equation*}
 |\widetilde{b}_{l_m}(a_{k_n} - b_{l_m})(a_{k_n} - \widetilde{b}_{l_m})|^{-1} \leq 
 |\widetilde{b}_{{k_n+1}}(a_{k_n} - b_{{k_n+1}})(a_{k_n} - \widetilde{b}_{k_{n+1}})|^{-1} \leq 2/a_{k_n}^2,
\end{equation*}
which are valid for any $m \in \mathbb{N}\text{\textbackslash}\{s\}$ and for sufficiently large $n \in \mathbb{N}$.

After getting these inequalities we apply proofs of Lemma \ref{lmm2} and Theorem \ref{DDNDindex1} with the $m$-function 
$m_{\alpha_1,\alpha_2,\beta}$ and the spectral measure 
 $\mu_{\alpha_1,\alpha_2,\beta}$ and obtain uniqueness of $\{b_{l_n}\}_{n\in \mathbb{N}}$. Even though asymptotics of the 
 spectra may be different than Dirichlet-Dirichlet, Neumann-Dirichlet case, the same arguments can be used. After unique recovery of the 
 two spectra $\sigma_{\alpha_2,\beta}$ and $\sigma_{\alpha_1,\beta}$, Levinson's theorem uniquely determines the potential.\\
\end{proof}

\begin{theorem}\label{GBCindex2}
Let $q \in L^1(0,\pi)$, $\sin(\alpha_2 - \alpha_1) \neq 0$, $\alpha_1,\alpha_2,\beta \in [0,\pi)$ and 
$\prod_{n\in \mathbb{N}}a_{k_{n}}/b_{l_n}$ be absolutely convergent, where 
$\displaystyle\{a_{k_n}\}_{n\in \mathbb{N}} \subset \sigma_{\alpha_2,\beta}$, $\{b_{l_n}\}_{n\in \mathbb{N}} \subset \sigma_{\alpha_1,\beta}$.\\

(i) If $\alpha_1 = 0$ or $\alpha_2 = 0$, then $\{a_n\}_{n \in \mathbb{N}}$, 
$\{b_n\}_{n \in \mathbb{N}}\text{\textbackslash}\{b_{l_n}\}_{n\in \mathbb{N}}$ and 
$\{\gamma_{k_n}\}_{n \in \mathbb{N}}$ determine the potential $q$, where $\sigma_{\alpha_2,\beta} = \{a_n\}_{n \in \mathbb{N}}$, 
$\sigma_{\alpha_1,\beta} = \{b_n\}_{n \in \mathbb{N}}$ are two spectra and $\{\gamma_n\}_{n \in \mathbb{N}}$ are 
point masses of the spectral measure $\mu_{\alpha_1,\alpha_2,\beta} = \sum_{n \in \mathbb{N}}\gamma_n \delta_{a_n}$.\\

(ii) If $\alpha_1 \neq 0$, $\alpha_2 \neq 0$ and there exists $N \in \mathbb{N}$ such that $k_n \neq l_n$ for all $n ~\textgreater~ N$, then 
$\{a_n\}_{n \in \mathbb{N}}$, 
$\{b_n\}_{n \in \mathbb{N}}\text{\textbackslash}\{b_{l_n}\}_{n\in \mathbb{N}}$ and 
$\{\gamma_{k_n}\}_{n \in \mathbb{N}}$ determine the potential $q$, where 
$\sigma_{\alpha_2,\beta} = \{a_n\}_{n \in \mathbb{N}}$, $\sigma_{\alpha_1,\beta} = \{b_n\}_{n \in \mathbb{N}}$ are two spectra and 
$\{\gamma_n\}_{n \in \mathbb{N}}$ are 
point masses of the spectral measure $\mu_{\alpha_1,\alpha_2,\beta} = \sum_{n \in \mathbb{N}}\gamma_n \delta_{a_n}$.
\end{theorem}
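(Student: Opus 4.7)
The plan is to combine the argument of Theorem \ref{DDNDindex2} with the case analysis of Theorem \ref{GBCthm}, under the further refinements imposed by Theorem \ref{GBCindex1} when both boundary parameters $\alpha_1$ and $\alpha_2$ are nonzero. First I would split the analysis into the four cases (i)--(iv) of the proof of Theorem \ref{GBCthm}, determined by the signs of $\alpha_1,\alpha_2,\beta$. In the cases where $a_n ~\textgreater~ b_n$ eventually, the $m$-function $m_{\alpha_1,\alpha_2,\beta}$ admits the infinite product representation of the form (\ref{mfcn2}), so I can factor $m = FG$ with
$$G(z) := -C \prod_{n \in \mathbb{N}}\left(\frac{z}{b_{l_n}}-1\right)\left(\frac{z}{a_{k_n}}-1\right)^{-1}$$
exactly as in Theorem \ref{DDNDindex2}, and the residues $\{\mathrm{Res}(G,a_{k_n})\}_{n \in \mathbb{N}}$ are known from the data. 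In the case where $a_n ~\textless~ b_n$, I apply the shift trick from Theorem \ref{GBCthm}: introduce an auxiliary positive point less than $a_1$ and work with $\frac{z-b_1}{b_1}G(z)$ instead of $G$; the same shift is applied to $\widetilde{G}$ so that the difference argument is preserved.

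Next I would verify the hypotheses of Lemma \ref{lmm3} under the appropriate asymptotics. In case (i), where at least one of $\alpha_1,\alpha_2$ is zero, the two spectra sit on different scales ($n^2$ versus $(n-1/2)^2$) so that $|a_{k_n}-b_{k_n}| = O(k_n)$, and the proof of Lemma \ref{lmm3} extends with purely notational changes. In case (ii), where $\alpha_1,\alpha_2 \neq 0$, both spectra satisfy the same asymptotics (\ref{asy11}) and $|a_n-b_n|$ is bounded; this is precisely why the hypothesis $k_n \neq l_n$ for $n ~\textgreater~ N$ is needed, as in Theorem \ref{GBCindex1}. Under that hypothesis $|a_{k_n}-b_{l_n}| \geq c(k_n+l_n)$ for large $n$, which restores $\{1/(a_{k_n}-b_{l_n})\}_{n \in \mathbb{N}} \in l^1$ from absolute convergence of $\prod_{n}a_{k_n}/b_{l_n}$, the key technical input to Lemma \ref{lmm3}.

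With the {\u C}ebotarev-type expansion of $G$ in hand, the remainder proceeds as in Theorem \ref{DDNDindex2}. Comparing $G$ with the auxiliary product $\widetilde{G}$ built from candidate zeros $\{\widetilde{b}_{l_n}\}_{n \in \mathbb{N}}$ sharing $G$'s poles and residues, the arguments of Theorem \ref{GBCindex1} (case analysis on asymptotics, along with the modified inequality $|\widetilde{b}_{k_n+1}(a_{k_n}-b_{k_n+1})(a_{k_n}-\widetilde{b}_{k_n+1})|^{-1} \leq 2/a_{k_n}^2$ in case (ii)) show that $G(z)-\widetilde{G}(z)$ reduces to a real constant. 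The identity $G(z)+C = \widetilde{G}(z)+\widetilde{C}$ is then evaluated as $z \to \infty$ along the second quadrant, where uniform convergence of the two infinite products on that sector (a direct consequence of absolute convergence) yields $(1-\gamma)C = (1-\widetilde{\gamma})\widetilde{C}$, with $\gamma := \prod_n a_{k_n}/b_{l_n}$ and $\widetilde{\gamma}$ analogous. The auxiliary ratio $H := G/\widetilde{G}$ evaluated at the common poles $a_{k_m}$, letting $m \to \infty$, gives $\widetilde{C}/C = \gamma/\widetilde{\gamma}$; combining these forces $\gamma = \widetilde{\gamma}$ and $C = \widetilde{C}$, hence $G = \widetilde{G}$. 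Uniqueness of $\{b_{l_n}\}_{n \in \mathbb{N}}$ follows, and Levinson's theorem recovers the potential.

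I expect the main obstacle to be the bookkeeping in the case $a_n ~\textless~ b_n$, where the shift by $(z-b_1)/b_1$ must be installed uniformly on $G$ and $\widetilde{G}$ while preserving the absolute convergence of the products and the validity of Lemma \ref{lmm3}'s expansion for the shifted function. A secondary delicate point is the limiting argument on the second quadrant: one must ensure uniform convergence of both $\prod_n (z/b_{l_n}-1)(z/a_{k_n}-1)^{-1}$ and its tilded counterpart on a common sector extending to infinity, so that the identity $G(z)+C = \widetilde{G}(z)+\widetilde{C}$ yields the clean equation for $\gamma, \widetilde{\gamma}$. Absolute convergence of $\prod_n a_{k_n}/b_{l_n}$ is exactly what makes this step work in both cases (i) and (ii).
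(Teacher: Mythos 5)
Your proposal follows the paper's own proof in essentially every respect: you reduce to Lemma \ref{lmm3} and rerun the argument of Theorem \ref{DDNDindex2} with the $m$-function $m_{\alpha_1,\alpha_2,\beta}$, you isolate as the only genuinely new ingredient in case (ii) the derivation of $\{1/(a_{k_n}-b_{l_n})\}_{n\in\mathbb{N}} \in l^1$ from absolute convergence of $\prod_{n} a_{k_n}/b_{l_n}$ together with $k_n \neq l_n$ for large $n$ (the lower bound $|a_{k_n}-b_{l_n}| \geq c(k_n+l_n)$ combined with $\sum_n 1/l_n < \infty$ is exactly the paper's computation), and you finish with the identical endgame $C(1-\gamma) = \widetilde{C}(1-\widetilde{\gamma})$, $\widetilde{C}/C = \gamma/\widetilde{\gamma}$, hence $\gamma = \widetilde{\gamma}$, $C = \widetilde{C}$, and Levinson's theorem. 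The one place you deviate --- importing the $(z-b_1)/b_1$ shift from Theorem \ref{GBCthm} for the ordering $a_n < b_n$ --- is unnecessary (and would force you to redo the growth bounds in Lemma \ref{lmm2}), because the representation of Lemma \ref{lmm3} comes from the maximum-modulus/polynomial-growth argument rather than from {\u C}ebotarev's theorem, so no Herglotz or interlacing normalization is required and the paper simply runs the Theorem \ref{DDNDindex2} argument with the sign of the constant adjusted.
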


\begin{proof}
If $\alpha_1 = 0$ or $\alpha_2 = 0$, we follow the proofs of Lemma \ref{lmm3} and Theorem \ref{DDNDindex2} with the $m$-function 
$m_{\alpha_1,\alpha_2,\beta}$ and the spectral measure $\mu_{\alpha_1,\alpha_2,\beta}$ and 
obtain uniqueness of $\{b_{l_n}\}_{n\in \mathbb{N}}$. After unique recovery of the two spectra 
$\sigma_{\alpha_2,\beta}$ and $\sigma_{\alpha_1,\beta}$, Levinson's theorem uniquely determines the potential.
 
 If $\alpha_1 \neq 0$, $\alpha_2 \neq 0$ and there exists $N \in \mathbb{N}$ such that $k_n \neq l_n$ for all $n ~\textgreater~ N$, then the 
 only difference appears in showing $\{1/(a_{k_n}-b_{l_n}\}_{n \in \mathbb{N}} \in l^1$, so let us show that absolute convergence of 
 $\prod_{n\in \mathbb{N}}(a_{k_n}/b_{l_n})$ implies $\{1/(a_{k_{n}} - b_{l_n})\}_{n \in \mathbb{N}} \in l^1$. Since 
 $\prod_{n\in \mathbb{N}}(a_{k_n}/b_{l_n})$ is absolutely convergent, 
 \begin{equation*}
  \sum_{n\in \mathbb{N}}\left|\frac{a_{k_{n}} - b_{l_n}}{b_{l_n}}\right| = 
  \sum_{n\in \mathbb{N}}\left|\frac{(k_n-1)^2 - (l_n - 1)^2 + \gamma_1 + \alpha_{k_n} - \beta_{l_n}}
  {(l_n - 1)^2 + \gamma_2 + (2/\pi)\int_0^{\pi}q(x)dx + \beta_{l_n}}\right| ~\textless~ \infty,
 \end{equation*}
 i.e. $\{(k_n^2 - l_n^2 - 2k_n + 2l_n)/l_n^2\}_{n \in \mathbb{N}} \in l^1$. Here $\gamma_1 = 2[\cot(\alpha_2)-\cot(\alpha_1)]/\pi$, 
 $\gamma_2 = 2[\cot(\beta)+\cot(\alpha_1)]/\pi$ and wlog we assume $\beta \neq 0$. Note that 
 $\lim_{n \rightarrow \infty} a_{k_n}/b_{l_n} = 1$ implies $\lim_{n \rightarrow \infty} k_n/l_n = 1$. Therefore 
 \begin{align*}
  \infty &~\textgreater~ \sum_{n\in \mathbb{N}}\left|\frac{k_n^2 - l_n^2 - 2(k_n - l_n)}{l_n^2}\right|\\ 
  &= \sum_{n\in \mathbb{N}}\frac{k_n + l_n -2}{l_n}\left|\frac{k_n - l_n}{l_n}\right|\\
  &\geq \sum_{n=1}^{N}\left|\frac{k_n - l_n}{l_n}\right| + \sum_{n=N+1}^{\infty}\frac{1}{l_n}\\
  &\geq c_1 \sum_{n\in \mathbb{N}}\frac{1}{l_n}
 \end{align*}
where $N \in \mathbb{N}$ and $c_1 ~\textgreater~ 0$, so $\{1/l_n\}_{n \in \mathbb{N}} \in l^1$ and hence by Limit Comparison Test 
$\{1/k_n\}_{n \in \mathbb{N}} \in l^1$. Therefore $\{1/(a_{k_{n}} - b_{l_n})\}_{n \in \mathbb{N}} \in l^1$, since for $n ~\textgreater~ N$, 
$1/|a_{k_{n}} - b_{l_n}| \leq 1/|a_{k_{n}} - b_{k_n+1}| = O(1/k_n)$ as $n$ goes to $\infty$. Now we apply proofs of Lemma \ref{lmm3} and 
Theorem \ref{DDNDindex2} with the $m$-function $m_{\alpha_1,\alpha_2,\beta}$ and the spectral measure $\mu_{\alpha_1,\alpha_2,\beta}$ and 
obtain uniqueness of $\{b_{l_n}\}_{n\in \mathbb{N}}$. After unique recovery of the two spectra 
$\sigma_{\alpha_2,\beta}$ and $\sigma_{\alpha_1,\beta}$, Levinson's theorem uniquely determines the potential.
 
\end{proof}

 \appendix
 \section{Complex function theoretical tools}
 In this section we recall some definitions and theorems from complex function theory used in our discussions. 
 We follow \cite{POL}.
 
 A function on $\mathbb{R}$ is Poisson-summable if it is summable with respect to the Poisson measure $\Pi$, defined as 
 $d\Pi := dx/(1+x^2)$. The space of Poisson-summable functions on $\mathbb{R}$ is denoted by $L_{\Pi}^1$.
 
 The Schwarz integral of a Poisson-summable function $f$ is
 \begin{equation*}
  Sf(z) = \frac{1}{i\pi}\int_{\mathbb{R}}\left(\frac{1}{t-z}-\frac{t}{1+t^2}\right)f(t)dt.
 \end{equation*}
The Schwarz integral of a real valued Poisson-summable function is given in terms of its Poisson and conjugate Poisson integrals:
\begin{align*}
 Sf &= Pf + iQf\\
 Pf(z) &= \frac{1}{\pi}\int\frac{y}{(t-x)^2 + y^2}f(t)dt\\
 Qf(z) &= \frac{1}{\pi}\int\left(\frac{x-t}{(x-t)^2+y^2} + \frac{1}{1+t^2}\right) f(t)dt
\end{align*}

A measure $\mu$ on $\mathbb{R}$ is Poisson-finite if $\int\frac{1}{1+t^2}d|\mu|(t) ~\textless~ \infty$. The Schwarz integral of a 
Poisson-finite measure $\mu$,  defined as 
\begin{equation*}
  S\mu(z) = \frac{1}{i\pi}\int_{\mathbb{R}}\left(\frac{1}{t-z}-\frac{t}{1+t^2}\right)d\mu(t),
 \end{equation*}
is analytic in the upper half-plane $\mathbb{C}_+$.

Outer functions in $\mathbb{C}_+$ are analytic functions of the form $e^{Sf}$ for $f \in L_{\Pi}^1$. 

Inner functions in $\mathbb{C}_+$ are bounded analytic functions with non-tangential boundary values, equal to $1$ in modulus, almost 
everywhere on $\mathbb{R}$. If an inner function extends to $\mathbb{C}$ meromorphically, it is called meromorphic inner function, 
usually denoted by $\Theta$.

Hilbert transform of $f \in L_{\Pi}^1$, denoted by $\widetilde{f}$, is defined as the singular integral
\begin{equation*}
 \widetilde{f}(x) = \frac{1}{\pi} ~p.v.\int\left[\frac{1}{x-t}+\frac{t}{1+t^2}\right]f(t)dt.
\end{equation*}
It is the angular limit of $Qf = \Im Sf$, hence the outer function $e^{Sf}$ coincides with $e^{f+i\widetilde{f}}$ on $\mathbb{R}$.

A meromorphic function is said to be real if it maps real numbers to real numbers on its domain. A meromorphic Herglotz function $m$ is a 
real meromorphic function with positive imaginary part on $\mathbb{C}_+$. It has negative imaginary 
part on $\mathbb{C}_-$ via the relation $m(\overline{z}) = \overline{m(z)}$. 

There is a one-to-one correspondence between meromorphic inner functions and meromorphic Herglotz functions via equations
\begin{equation*}
 m = i\frac{1+\Theta}{1-\Theta}, \quad \quad \quad \Theta = \frac{m-i}{m+i}.
\end{equation*}

A meromorphic Herglotz function can be described as the Schwarz integral of a positive discrete Poisson-finite measure:
\begin{equation*}
 m(z) = az + b + iS\mu,
\end{equation*}
where $a\geq 0$, $b \in \mathbb{R}$. The term $iS$ is also called the Herglotz integral and usually denoted by $H$. This representation is 
valid even if the Herglotz function can not be extended meromorphically to $\mathbb{C}$, in which case $\mu$ may not be discrete. 
It is called the Herglotz representation theorem. {\u C}ebotarev proved a similar result.
\begin{theorem} [{{\u C}ebotarev \cite{LEV}}] \label{Cebotarev} If the real meromorphic function $m$ maps $\mathbb{C}_+$ onto $\mathbb{C}_+$, then its poles 
$\{a_k\}_{k \in \mathbb{Z}}$ are all real and simple, and it may be represented in the form 
\begin{equation*}
 m(z) = az + b + \sum_{k=N}^{M} A_k\left(\frac{1}{a_k-z} - \frac{1}{a_k}\right),
\end{equation*}
where $a\geq 0$, $b \in \mathbb{R}$, $-\infty \leq N ~\textless~ M \leq \infty$, $A_k \geq 0$ and the sum 
$\sum_{k=N}^{M}A_k/a_k^2$ converges.  
\end{theorem}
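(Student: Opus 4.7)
The plan is to deduce Čebotarev's representation from the Herglotz representation theorem stated earlier in this appendix, by identifying the boundary measure of $m$ as a purely atomic measure concentrated on the (real, simple) poles $\{a_k\}$.

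First I would establish that every pole of $m$ is real and simple with a negative residue. Realness follows because $m$ is a real meromorphic function, so poles come in complex-conjugate pairs; but $m$ is holomorphic on $\mathbb{C}_+$ by hypothesis, and by the symmetry $m(\bar z) = \overline{m(z)}$ it is also holomorphic on $\mathbb{C}_-$, so every pole must lie on $\mathbb{R}$. For simplicity and the sign of the residue, I would write $m(z) = c_n(z-a_k)^{-n} + O((z-a_k)^{-(n-1)})$ with $c_n \in \mathbb{R}$ near a pole $a_k$ of order $n$, parametrize $z - a_k = \varepsilon e^{i\theta}$ with $0<\theta<\pi$, and observe that the leading contribution $\Im m \sim -c_n\varepsilon^{-n}\sin(n\theta)$ must remain nonnegative for every such $\theta$; this forces $n = 1$ and $c_1 < 0$, so $A_k := -\operatorname{Res}(m, a_k) \geq 0$.

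Next, I would apply the Herglotz representation theorem to write
\[
m(z) = az + b + \int_{\mathbb{R}}\Bigl(\frac{1}{t-z} - \frac{t}{1+t^2}\Bigr)\, d\mu(t)
\]
for some $a \geq 0$, $b \in \mathbb{R}$ and a positive Poisson-finite Borel measure $\mu$. The heart of the argument is to prove $\mu = \sum_k A_k\, \delta_{a_k}$. For this I would use the Stieltjes inversion formula: for any continuous, compactly supported $\varphi$,
\[
\int \varphi\, d\mu = \lim_{y \to 0^+} \frac{1}{\pi}\int_{\mathbb{R}} \varphi(x)\, \Im m(x+iy)\, dx.
\]
On any open interval disjoint from the pole set, $m$ extends analytically across $\mathbb{R}$ and is real there, so $\Im m(x+iy) \to 0$ uniformly and $\mu$ assigns no mass. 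Near a simple pole $a_k$, the decomposition $m(z) = -A_k/(z-a_k) + h_k(z)$ with $h_k$ locally holomorphic gives $\Im m(x+iy) = A_k\, y/((x-a_k)^2 + y^2) + \Im h_k(x+iy)$, and the approximate-identity property of the Poisson kernel yields a point mass of weight $A_k$ at $a_k$ while the $h_k$-term contributes zero in the limit.

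Once the measure is identified, the remaining work is cosmetic. Poisson-finiteness $\sum_k A_k/(1+a_k^2) < \infty$, together with the discreteness of the pole set (so at most finitely many $a_k$ lie near zero), immediately yields $\sum_k A_k/a_k^2 < \infty$. Then I would convert the Herglotz kernel to the stated one using the identity
\[
\frac{1}{a_k-z} - \frac{1}{a_k} = \Bigl(\frac{1}{a_k-z} - \frac{a_k}{1+a_k^2}\Bigr) - \frac{1}{a_k(1+a_k^2)},
\]
absorbing the absolutely convergent constant $\sum_k A_k/(a_k(1+a_k^2))$ into $b$. The main obstacle is the identification of $\mu$ in the third step: one must justify the weak limit carefully, separating the contributions from different poles when the pole sequence accumulates only at $\pm\infty$ (the two-sided infinite case $N = -\infty$, $M = \infty$) and confirming that the non-pole portion of $m$ contributes no boundary mass even in the presence of an unbounded meromorphic continuation.
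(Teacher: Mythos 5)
The paper never proves this statement: Theorem~\ref{Cebotarev} is quoted verbatim from Levin's monograph \cite{LEV} and used as a black box, so there is no internal proof to compare yours against. Judged on its own, your argument is correct and is the standard modern derivation: realness and simplicity of the poles with $A_k\geq 0$ via the local Laurent analysis (the symmetry $m(\overline z)=\overline{m(z)}$ makes the Laurent coefficients real, and $\sin(n\theta)$ changes sign on $(0,\pi)$ for $n\geq 2$, forcing $n=1$ and a negative residue), then the Herglotz representation, identification of the boundary measure as $\sum_k A_k\,\delta_{a_k}$ by Stieltjes--Perron inversion, and the kernel correction $\frac{1}{a_k-z}-\frac{1}{a_k}=\bigl(\frac{1}{a_k-z}-\frac{a_k}{1+a_k^2}\bigr)-\frac{1}{a_k(1+a_k^2)}$ with the absolutely convergent constant absorbed into $b$. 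Note that the appendix states, just above Čebotarev's theorem, that a meromorphic Herglotz function is $az+b+iS\mu$ with $\mu$ \emph{discrete}; your inversion step actually proves that discreteness rather than assuming it, which is a genuine merit of the write-up. Three points to tighten. First, read ``onto'' as ``into'': what is used is only $\Im m>0$ on $\mathbb{C}_+$ (the wording is a translation artifact from \cite{LEV}), and surjectivity plays no role in your argument. Second, the stated formula tacitly assumes $0$ is not a pole, since otherwise $1/a_k$ is undefined; your absorption of $\sum_k A_k/(a_k(1+a_k^2))$ into $b$, and the passage from $\sum_k A_k/(1+a_k^2)<\infty$ to $\sum_k A_k/a_k^2<\infty$, both need $|a_k|$ bounded away from zero, which discreteness of the pole set gives once $0\notin\{a_k\}$ is made explicit. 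Third, the ``main obstacle'' you flag at the end is easily discharged and you should say how: for fixed $\varphi\in C_c(\mathbb{R})$ only finitely many poles meet $\operatorname{supp}\varphi$ (poles of a function meromorphic on all of $\mathbb{C}$ accumulate only at infinity), so $m=\sum_{\text{finite}}\frac{-A_k}{z-a_k}+h$ with $h$ holomorphic on a complex neighborhood of $\operatorname{supp}\varphi$ and real on its real trace, whence $\Im h(x+iy)\to 0$ uniformly there and the two-sided infinite case $N=-\infty$, $M=\infty$ causes no difficulty.
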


\section*{Acknowledgement}
 I am grateful to Alexei Poltoratski for suggesting the problem of this paper and for all the help and support he has given me.


\begin{thebibliography}{99}

\bibitem{AMB} \textsc{V. Ambarzumian}, \textit{\"{U}ber eine Frage der Eigenwerttheorie}, Zeitschrift f\"{u}r Physik \textbf{53} (1929), 690-695 (in German).

\bibitem{AF} \textsc{L. Amour, J. Faupin}, \textit{Inverse spectral results for the Schr\"{o}dinger operator in Sobolev spaces}, 
Int. Math. Res. Not. IMRN \textbf{22} (2010), 4319-4333.

\bibitem{AFR} \textsc{L. Amour, J. Faupin, T. Raoux}, \textit{Inverse spectral results for Schr\"{o}dinger operator on the unit interval with 
partial information given on the potentials}, J. Math. Phys. \textbf{50}, 033505 (2009).

\bibitem{AR} \textsc{L. Amour, T. Raoux}, \textit{Inverse spectral results for Schr\"{o}dinger operators on the unit interval with potentials 
in $l^p$ spaces}, Inverse Probl. \textbf{23(6)} (2007), 2367-2373.

\bibitem{BBP} \textsc{A. Baranov, Y. Belov, A. Poltoratski}, \textit{De Branges functions for Schr\"{o}dinger equations}, 
Collectanea Mathematica, \textbf{68} (2017), 251-263.

\bibitem{BM} \textsc{A. Beurling, P. Malliavin}, \textit{On Fourier transforms of measures with compact support}, 
Acta Math., \textbf{107} (1962), 291-302.

\bibitem{BM2} \textsc{A. Beurling, P. Malliavin}, \textit{On the closure of characters and the zeros of entire functions}, 
Acta Math., \textbf{118} (1967), 79-93.

\bibitem{BON} \textsc{N.P. Bondarenko}, \textit{A partial inverse problem for the Sturm-Liouville operator on a star-shaped graph}, 
Anal. Math. Phys., \textbf{8} (2018), 155-168.

\bibitem{BON2} \textsc{N.P. Bondarenko}, \textit{Partial inverse problems for the Sturm-Liouville operator on a star-shaped graph with 
mixed boundary conditions}, J. Inverse Ill-Posed Probl, \textbf{26} (2018), 1-12.

\bibitem{BS} \textsc{N.P. Bondarenko, C.T. Shieh}, \textit{Partial inverse problems for Sturm-Liouville operators on trees}, 
Proc. Roy. Soc. Edinburgh Sect. A, \textbf{147} (2017), 917-933.

\bibitem{BY} \textsc{N.P. Bondarenko, C.F. Yang}, \textit{Partial inverse problems for the Sturm-Liouville operator on a star-shaped 
graph with different edge lengths}, Results Math., \textbf{73} (2018), Art. 56.

\bibitem{BOR} \textsc{G. Borg}, \textit{Eine Umkehrung der Sturm-Liouvilleschen Eigenwertaufgabe}, Acta Math. \textbf{78} (1946), 1-96 
(in German).

\bibitem{DGS} \textsc{R. del Rio, F. Gesztesy, B. Simon}, \textit{Inverse spectral analysis with partial information on the potential, III. 
Updating boundary conditions}, Intl. Math. Research Notices \textbf{15} (1997), 751-758. 

\bibitem{ECK} \textsc{J. Eckhardt}, \textit{Direct and inverse spectral theory of singular left-definite Sturm-Liouville operators}, 
J. Differential Equations \textbf{253} (2012), 604-634. 

\bibitem{ECK2} \textsc{J. Eckhardt}, \textit{Inverse uniqueness results for Schr\"{o}dinger operators using de Branges theory}, 
Complex Anal. Oper. Theory \textbf{8} (2014), 37-50. 

\bibitem{EGRT} \textsc{J. Eckhardt, F. Gesztesy, N. Roger, G. Teschl}, \textit{Inverse spectral theory for Sturm-Liouville operators with 
distributional potentials}, J. Lond. Math. Soc. \textbf{88} (2013), 801-828.

\bibitem{ET} \textsc{J. Eckhardt, G. Teschl}, \textit{Uniqueness results for one-dimensional Schr\"{o}dinger operators with purely discrete 
spectra}, Trans. Am. Math. Soc. \textbf{365} (2013), 3923-3942.

\bibitem{EVE} \textsc{W.N. Everitt}, \textit{On a property of the $m$-coefficient of a second order linear differential equation}, 
J. London Math. Soc., \textbf{4} (1972), 443-457.

\bibitem{GES} \textsc{F. Gesztesy}, \textit{Inverse spectral theory as influenced by Barry Simon. Spectral theory and mathematical physics: 
a Festschrift in honor of Barry Simon's 60th birthday}, 741-820, Proc. Sympos. Pure Math., 76, AMS, Providence, Rhode Island, 2007.

\bibitem{GS} \textsc{F. Gesztesy, B. Simon}, \textit{A new approach to inverse spectral theory, II. General real potentials and the 
connection to the spectral measure}, Ann. Math. \textbf{152} (2000), 593-643.

\bibitem{GS2} \textsc{F. Gesztesy, B. Simon}, \textit{Inverse spectral analysis with partial information on the potential, II. 
The case of discrete spectrum}, Trans. Am. Math. Soc. \textbf{352} (2000), 2765-2787.

\bibitem{GS3} \textsc{F. Gesztesy, B. Simon}, \textit{Uniqueness theorems in inverse spectral theory for one-dimensional Schr\"{o}dinger 
operators}, Trans. Am. Math. Soc. \textbf{348} (1996), 349-373.

\bibitem{GUL} \textsc{N.J. Guliyev}, \textit{Essentially isospectral transformations and their applications}, 
Annali di Matematica Pura ed Applicata \textbf{Online First} (2019).

\bibitem{GUL2} \textsc{N.J. Guliyev}, \textit{Schr\"{o}dinger operators with distributional potentials and boundary 
conditions dependent on the eigenvalue parameter}, J. Math. Phys. \textbf{60}, 063501 (2019).

\bibitem{GW} \textsc{Y. Guo, G. Wei}, \textit{Inverse Sturm-Liouville problems with the potential known on an interior subinterval}, 
Appl. Anal. \textbf{94} (2015), 1025-1031.

\bibitem{GW2} \textsc{Y. Guo, G. Wei}, \textit{Inverse problems: dense nodal subset on an interior subinterval}, 
J. Differential Equations \textbf{255} (2013), 2002-2017.

\bibitem{HL} \textsc{H. Hochstadt, B. Lieberman}, \textit{An inverse Sturm-Liouville problem with mixed given data}, 
SIAM J. Appl. Math. \textbf{34(4)} (1978), 676-680.

\bibitem{HOR} \textsc{M. Horv\'{a}th}, \textit{Inverse spectral problems and closed exponential systems}, Ann. Math. \textbf{162} (2005), 
885-918.

\bibitem{HOR2} \textsc{M. Horv\'{a}th}, \textit{On the inverse spectral theory of Schr\"{o}dinger and Dirac operators}, 
Trans. Amer. Math. Soc. \textbf{353} (2001), 4155-4171.

\bibitem{HM} \textsc{R.O. Hryniv, Y.V. Mykytyuk}, \textit{Half-inverse spectral problems for Sturm–Liouville operators with singular 
potentials}, Inverse Probl. \textbf{20(5)} (2004), 1423-1444.

\bibitem{HS} \textsc{M. Horv\'{a}th, O. S\'{a}f\'{a}r}, \textit{Inverse eigenvalue problems}, J. Math. Phys. \textbf{57}, 112102 (2016).

\bibitem{KST} \textsc{A. Kostenko, A. Sakhnovich, G. Teschl}, \textit{Weyl-Titchmarsh theory for Schr\"{o}dinger operators with strongly 
singular potentials}, Int. Math. Res. Not. \textbf{8} (2012), 1699-1747.

\bibitem{LEV}  \textsc{B.J. Levin}, \textit{Distribution of Zeros of Entire Functions}, Translations of Mathematical Monographs,
Volume 5, AMS, Providence, Rhode Island, 1980.

\bibitem{LVS} \textsc{N. Levinson}, \textit{The inverse Sturm-Liouville problem}, Mat. Tidsskr. B (1949), 25-30.

\bibitem{LG} \textsc{B.M. Levitan, M.G. Gasymov}, \textit{Determination of a differential equation by two of its spectra}, 
Russ. Math. Surveys \textbf{19},1 (1964).

\bibitem{LS} \textsc{B.M. Levitan, I.S. Sargsjan}, \textit{Introduction to Spectral Theory}, Translations of Mathematical Monographs,
Volume 39, AMS, Providence, Rhode Island.

\bibitem{MP} \textsc{N. Makarov, A. Poltoratski}, \textit{Two spectra theorem with uncertainty}, Journal of Spectral Theory 
\textbf{9} (2019), 1249-1285.

\bibitem{MAR} \textsc{V.A. Marchenko}, \textit{Some questions in the theory of one-dimensional linear differential operators 
of the second order I}, Trudy Moskov Mat. Obsc. \textbf{1} (1952), 327-420 (in Russian).

\bibitem{PS} \textsc{W.Y. Ping, C.T. Shieh}, \textit{Inverse Problems for Sturm-Liouville Equations with Boundary Conditions Linearly 
Dependent on the Spectral Parameter from Partial Information}, Results. Math. \textbf{65} (2014), 105-119.

\bibitem{POL} \textsc{A. Poltoratski}, \textit{Toeplitz Approach to Problems of the Uncertainty Principle}, 
CBMS series, Number 121, AMS, Providence, Rhode Island, 2015.

\bibitem{SAT} \textsc{M. Sat}, \textit{Inverse problems for Sturm-Liouville operators with boundary conditions depending on a spectral 
parameter}, Electron. J. Differential Equations (2017), 1-7.

\bibitem{SIM} \textsc{B. Simon}, \textit{A new approach to inverse spectral theory, I. Fundamental formalism}, 
Ann. Math. \textbf{150} (1999), 1029-1057.

\bibitem{TES} \textsc{G. Teschl}, \textit{Mathematical Methods in Quantum Mechanics, With Applications to Schr\"{o}dinger Operators}, 
Graduate Studies in Mathematics, Volume 99, AMS, Providence, Rhode Island, 2009.

\bibitem{TU} \textsc{G. Teschl, K. Unterkofler}, \textit{Spectral theory as influenced by Fritz Gesztesy. Spectral analysis, differential 
equations and mathematical physics: a Festschrift in honor of Fritz Gesztesy's 60th birthday}, 341-363, Proc. Sympos. Pure Math., 87, 
AMS, Providence, Rhode Island, 2013.

\bibitem{WAN} \textsc{Y.P. Wang}, \textit{Uniqueness theorems for Sturm-Liouville operators with interior twin-dense nodal set}, 
Electron. J. Differential Equations (2017), 1-11.

\bibitem{WK} \textsc{Y.P. Wang, H. Koyunbakan}, \textit{On the Hochstadt-Lieberman theorem for discontinuous boundary-valued problems}, 
Acta Math. Sin. \textbf{30} (2014), 985-992.

\bibitem{WSM} \textsc{Y.P. Wang, C.T. Shieh, Y.T. Ma}, \textit{Inverse spectral problems for Sturm-Liouville operators with partial 
information}, Appl. Math. Lett. \textbf{26} (2013), 1175-1181.

\bibitem{WY} \textsc{Y.P. Wang, V.A. Yurko}, \textit{On the inverse nodal problems for discontinuous Sturm-Liouville operators}, 
J. Differential Equations \textbf{260} (2016), 4086-4109.

\bibitem{WW} \textsc{Z. Wei, G. Wei}, \textit{Uniqueness results for inverse Sturm-Liouville problems with partial information 
given on the potential and spectral data}, Bound Value Probl (2009), 2016: 200

\bibitem{WW2} \textsc{Z. Wei, G. Wei}, \textit{On the uniqueness of inverse spectral problems associated with incomplete spectral data}, 
J. Math. Anal. Appl. \textbf{462} (2018), 697-711.

\bibitem{WX} \textsc{G.S. Wei, H.K. Xu}, \textit{On the missing eigenvalue problem for an inverse Sturm-Liouville problem}, 
J. Math. Pures Appl. \textbf{91} (2009), 468-475.

\bibitem{WX2} \textsc{G.S. Wei, H.K. Xu}, \textit{Inverse spectral problem with partial information given on the potential and norming 
constants}, Trans. Amer. Math. Soc. \textbf{364} (2012), 3265-3288.

\bibitem{YAN} \textsc{C.F. Yang}, \textit{Inverse spectral problems for the Sturm-Liouville operator on a d-star graph}, 
J. Math. Anal. Appl. \textbf{365} (2010), 742-749.

\end{thebibliography}
\end{document}